\UseAllTwocells \xyoption{frame} \CompileMatrices
\newtheorem{prop}{Proposition}[section]
\newtheorem{lem}[prop]{Lemma}
\newtheorem{thm}[prop]{Theorem}
\newtheorem{rmk}[prop]{Remark}
\newtheorem{defn}[prop]{Definition}
\newcommand{\noprint}[1]{}
\newcommand{\Ext}{\mbox{Ext}}
\newcommand{\Hom}{\mbox{Hom}}
\newcommand{\pt}{\mathop{pt}}
\newcommand{\E}{\mathop{\sf E}\nolimits}
\newcommand{\sfH}{\mathop{\sf H}\nolimits}
\newcommand{\N}{\mathcal{N}}
\newcommand{\Tt}{{\mathfrak t}}
\newcommand{\zz}{{\mathbb Z}}
\newcommand{\T}{{\mathbb T}}
\newcommand{\aaa}{{\mathbb A}}
\newcommand{\nn}{{\mathbb N}}
\renewcommand{\ll}{{\mathbb L}}
\newcommand{\qq}{{\mathbb Q}}
\newcommand{\pp}{{\mathbb P}}
\newcommand{\cc}{{\mathbb C}}
\newcommand{\Gm}{{{\mathbb G}_{\mbox{\tiny\rm m}}}}
\newcommand{\sT}{{\mathcal T}}
\newcommand{\sD}{{\mathcal D}}
\newcommand{\sC}{{\mathcal C}}
\newcommand{\sE}{{\mathcal E}}
\newcommand{\sI}{{\mathcal I}}
\newcommand{\sL}{{\mathcal L}}
\newcommand{\sS}{{\mathcal S}}
\newcommand{\sU}{{\mathcal U}}
\newcommand{\sO}{{\mathcal O}}
\newcommand{\sX}{{\mathcal X}}
\newcommand{\sM}{{\mathcal M}}
\newcommand{\yY}{{\mathcal Y}}
\newcommand{\sZ}{{\mathcal Z}}
\newcommand{\sF}{{\mathcal F}}
\newcommand{\Coh}{\mbox{Coh}}
\newcommand{\rE}{\mathscr{E}}
\newcommand{\rM}{\mathscr{M}}
\newcommand{\cEnd}{\mathscr{E}nd}
\newcommand{\cHom}{\mathscr{H}om}
\DeclareMathOperator{\loc}{loc}
\DeclareMathOperator{\id}{id}
\DeclareMathOperator{\Sch}{Sch}
\DeclareMathOperator{\Hilb}{Hilb}
\DeclareMathOperator{\Aut}{Aut}
\DeclareMathOperator{\Ob}{Ob}
\DeclareMathOperator{\VW}{VW}
\DeclareMathOperator{\vw}{vw}
\DeclareMathOperator{\At}{At}
\DeclareMathOperator{\Cone}{Cone}
\DeclareMathOperator{\vir}{vir}
\DeclareMathOperator{\mov}{mov}
\DeclareMathOperator{\Pic}{Pic}
\DeclareMathOperator{\vd}{vd}
\DeclareMathOperator{\Ch}{Ch}
\DeclareMathOperator{\CR}{CR}
\DeclareMathOperator{\PD}{PD}
\DeclareMathOperator{\AJ}{AJ}
\DeclareMathOperator{\Supp}{Supp}
\DeclareMathOperator{\Higg}{Higg}
\DeclareMathOperator{\instan}{instan}
\newcommand{\rk}{\mathop{\rm rk}}
\newcommand{\ev}{\mathop{\rm ev}\nolimits}
\newcommand{\tr}{\mathop{\rm tr}\nolimits}
\renewcommand{\Im}{\mathop{\rm Im}}
\renewcommand{\top}{\mathop{\rm top}}
\newcommand{\ord}{\mathop{\rm ord}\nolimits}
\newcommand{\rank}{\mathop{\rm rank}\nolimits}
\newcommand{\red}{\mathop{\rm red}\nolimits}
\newcommand{\Jac}{\mathop{\rm Jac}\nolimits}
\newcommand{\ob}{\mathop{\rm ob}}
\newcommand{\spec}{\mathop{\rm Spec}\nolimits}
\newcommand{\proj}{\mathop{\rm Proj}\nolimits}
\numberwithin{equation}{subsection}
\newcommand {\mat}      [1] {\left(\begin{array}{#1}}
\newcommand {\rix}          {\end{array}\right)}
\title[Tanaka-Thomas VW invariants via stacks]{The Tanaka-Thomas's Vafa-Witten invariants via surface Deligne-Mumford stacks}
\author{Yunfeng Jiang and Promit Kundu}
\address{Department of Mathematics\\ University of Kansas\\ 405 Snow Hall 1460 Jayhawk Blvd\\Lawrence KS 66045 USA} 
\email{y.jiang@ku.edu}
\address{Department of Mathematics\\ University of Kansas\\ 405 Snow Hall 1460 Jayhawk Blvd\\Lawrence KS 66045 USA} 
\email{kundupromit63@ku.edu}
\begin{document}
\sloppy \maketitle
\begin{abstract}
We provide a definition of Vafa-Witten invariants for projective surface Deligne-Mumford stacks, generalizing  the construction of Tanaka-Thomas on the Vafa-Witten invariants for projective surfaces  inspired by the S-duality conjecture.  We give calculations for a root stack over a  general type quintic surface, and quintic surfaces with ADE singularities.  The relationship  between  the Vafa-Witten invariants of quintic surfaces with ADE singularities and  the Vafa-Witten invariants of their crepant resolutions is also discussed. 
\end{abstract}

\maketitle

\tableofcontents

\section{Introduction}

In this paper we generalize  the Tanaka-Thomas's Vafa-Witten invariants for projective surfaces \cite{TT1}, \cite{TT2} to  two dimensional smooth Deligne-Mumford (DM) stacks.  

\subsection{Background}

The  motivation from physics is the S-duality conjecture, where by physical duality theory  Vafa and Witten  \cite{VW} predict that the generating function of the Euler characteristic of the moduli space of stable coherent sheaves on projective surfaces should be modular form.  In history the S-duality is a rich conjecture.  For instance,    the reduction of  the S-duality conjecture from projective surfaces to product of curves is given by Kapustin-Witten \cite{KW}, which is related  to the Langland duality in number theory. 
In the mathematics side, the moduli space of solutions of the Vafa-Witten equation on a projective surface $S$ has a partial compactification by Gieseker semistable Higgs pairs $(E,\phi)$ on $S$, where $E$ is a coherent sheaf with rank $\rk >0$, and $\phi\in \Hom_{S}(E, E\otimes K_S)$ is a section called a Higgs field.  

The formula in \cite{VW} (for instance Formula (5.38) of \cite{VW}) and some mathematical calculations as in \cite{GK} imply that the invariants in \cite{VW} may have other contributions except purely from the projective surfaces.  In \cite{TT1}, \cite{TT2} Tanaka and Thomas have developed  the Vafa-Witten theory using the moduli space $\N$ of Gieseker semi-stable Higgs pairs $(E,\phi)$ on $S$ with topological data $(\rk=\rank, c_1(E), c_2(E))$.   By spectral theory, the moduli space $\N$ of Gieseker semi-stable Higgs pairs $(E,\phi)$ on $S$ is isomorphic to the moduli space of Gieseker semi-stable torsion sheaves  
$\sE_\phi$ on the total space $X:=\mbox{Tot}(K_S)$.  In the case that the semistability and stability coincide,  the moduli space $\N$ admits a symmetric obstruction theory in \cite{Behrend}.  Therefore there exists a dimension zero virtual fundamental cycle 
$[\N]^{\vir}\in H_0(\N)$.  The moduli space $\N$ is not compact, but it admits a $\cc^*$-action induced by the 
$\cc^*$-action on $X$ by scaling the fibers of $X\to S$.  The $\cc^*$-fixed locus $\N^{\cc^*}$ is compact,  then from \cite{GP}, $\N^{\cc^*}$ inherits a perfect obstruction theory from $\N$ and  Tanaka-Thomas \cite{TT1} define the virtual localized invariant
\begin{equation}\label{eqn_localized_invariants_intr}
\widetilde{\VW}(S):=\widetilde{\VW}_{\rk, c_1, c_2}(S)=\int_{[\N^{\cc^*}]^{\vir}}\frac{1}{e(N^{\vir})}
\end{equation}
as the $U(\rk)$ Vafa-Witten invariants, where $N^{\vir}$ is the virtual normal bundle. 

By the property of the obstruction sheaf this invariant $\widetilde{\VW}(S)$ is zero unless 
$H^{0,1}(S)=H^{0,2}(S)=0$.  As mentioned in \cite{TT1}, most of the surfaces satisfying the condition admit a vanishing theorem such that the invariants are just the signed Euler characteristic of the moduli space of stable sheaves on $S$. 
The right invariants are defined by  the moduli space $\N_L^{\perp}$ of Higgs pairs with fixed determinant $L\in\Pic(S)$ and trace-free  $\phi$. 
Tanaka-Thomas have carefully studied the deformation and obstruction theory of the Higgs pairs instead of using the ones for the two dimensional sheaves, and constructed a {\em symmetric obstruction theory} on $\N^{\perp}_{L}$.  
The space $\N^{\perp}_{L}$ still admits a $\cc^*$-action, therefore inherits a perfect obstruction theory on the fixed locus. 
Then the Vafa-Witten invariants  are defined as:
\begin{equation}\label{eqn_localized_SU_invariants_intr}
\VW(S):=\VW_{\rk, c_1, c_2}(S)=\int_{[(\N_L^{\perp})^{\cc^*}]^{\vir}}\frac{1}{e(N^{\vir})}.
\end{equation}
This corresponds to the $SU(\rk)$ gauge group in Gauge theory.   They did explicit calculations for some surfaces of general type in \cite[\S 8]{TT1} and verified some parts of Formula (5.38) in \cite{VW}.  Since for such general type surfaces, the 
$\cc^*$-fixed loci contain components such that the Higgs fields are non-zero,  there are really contributions from the threefolds to the Vafa-Witten invariants.  This is the first time that the threefold contributions are made for the Vafa-Witten invariants. 
Some calculations and the refined version of the Vafa-Witten invariants have been studied in \cite{Thomas2}, \cite{MT}, \cite{GK}, \cite{Laarakker}. 

\subsection{Invariants using the Behrend function}

There is another way to do the localization on the moduli space $\N^{\perp}_{L}$.  In \cite{Behrend}, Behrend constructs an integer valued constructible function 
$$\nu_{\N}: \N^{\perp}_{L}\to \zz$$
called the Behrend function.  We can define 
\begin{equation}\label{eqn_localized_Behrend_invariants_intr}
\vw(S):=\vw_{\rk, c_1, c_2}(S)=\chi(\N_L^{\perp}, \nu_{\N})
\end{equation}
where $\chi(\N_L^{\perp}, \nu_{\N})$ is the weighted Euler characteristic by the Behrend function $\nu_{\N}$.  The $\cc^*$-action on $\N^{\perp}_{L}$ induces a cosection $\sigma: \Omega_{\N^{\perp}_{L}}\to \sO_{\N^{\perp}_{L}}$ in \cite{KL} by taking the dual of the associated vector field $v$ given by the $\cc^*$-action. The degenerate locus is the fixed locus 
$(\N^{\perp}_{L})^{\cc^*}$, therefore there exists a cosection localized virtual cycle 
$[\N^{\perp}_{L}]^{\vir}_{\loc}\in H_0((\N^{\perp}_{L})^{\cc^*})$, and 
$$\int_{[\N^{\perp}_{L}]^{\vir}_{\loc}}1=\chi(\N_L^{\perp}, \nu_{\N})$$
which is proved in \cite{JT}, \cite{Jiang}. 
Tanaka-Thomas prove that in the case $\deg K_{S}< 0$ and the case that $S$ is a K3 surface, 
$\VW(S)=\vw(S)$. They also prove their corresponding  generalized Vafa-Witten invariants in \cite{JS} agree, see \cite{TT2} for the Fano case and  \cite{MT} for the K3 surface case. 

\subsection{Motivation for surface DM stacks}

The S-duality conjecture is interesting for surfaces with orbifold singularities.  In \cite{VW}, Vafa-Witten discussed the ALE spaces, which are hyperkahler four manifolds and crepant resolutions of the orbifold $\cc^2/G$ with $G\subset SU(2)$ a finite subgroup in $SU(2)$ corresponding to ADE Dynkin diagrams.  It is interesting to directly study the Vafa-Witten invariants for such two dimensional DM stacks. 

On the other hand, the crepant resolution conjecture in both Gromov-Witten theory  \cite{Ruan}, \cite{Bryan-Graber} and Donaldson-Thomas theory \cite{Bryan-Young} has been attracting a lot of interests in the past ten years.  Given a local orbifold $\sX=[\cc^3/G]$ for $G\subset SU(2)$, Bryan-Young formulated the crepant resolution conjecture for the generating function of the Donaldson-Thomas invariants for $\sX$ and  their crepant resolutions which are $G$-Hilbert schemes.  The conjecture can be explained as wall crossing formula of the counting invariants in the derived category of coherent sheaves corresponding to different Bridgeland stability conditions, see \cite{Calabrese}, \cite{BD}, \cite{Toda2}, \cite{Jiang2}.
It is interesting to study the Vafa-Witten invariants and the generalized Vafa-Witten invariants  for orbifolds via wall crossing techniques of Joyce-Song and Bridgeland.  In such a situation, the invariants $\vw$ seem to be more suitable to be put into the wall crossing formula, and we leave it as a future work. 

Finally our study of the Vafa-Witten invariants for surface DM stacks is  motivated by the S-duality conjecture and Langlands duality in \cite{KW}. In  Tanaka-Thomas \cite{TT1}, \cite{TT2},  the gauge group is
$SU(\rk)$.  Since the Langlands dual group of $SU(\rk)$ is $SU(\rk)/\zz_{\rk}$, one hopes that the study of the Vafa-Witten invariants for orbifold surfaces will be related to  the $SU(\rk)/\zz_{\rk}$-Vafa-Witten invariants.  In particular, one has $SU(2)/\zz_2\cong SO(3)$, so the $SU(2)$-Vafa-Witten invariants for a global quotient surface DM stack 
$[S/\zz_2]$ is related to  the $SO(3)$-Vafa-Witten invariants for $S$.  This should be related to the $SO(3)$-Donaldson invariants for the surface $S$, see \cite{KM}, \cite{Gottsche}, \cite{MW}.

\subsection{The moduli space of Higgs pairs on surface DM stacks}

Let $\sS$ be a smooth two dimensional DM stack with projective coarse moduli space $S$, which we call it  ``a surface DM stack".   The moduli space of stable coherent sheaves with a fixed Hilbert polynomial $H\in \qq[m]$ has been constructed by F. Nironi \cite{Nironi}.  In \cite{Nironi}, in order to define suitable Hilbert polynomials the author  chooses a generating sheaf $\Xi$ for $\sS$ and defines the {\em modified} Hilbert polynomial associated with this generating sheaf.  Let $p: \sS\to S$ be the morphism to its coarse moduli space.  A locally free sheaf $\Xi$ on $\sS$ is $p$-very ample if for every geometric point of $\sS$ the representation of the stabilizer group at that point contains every irreducible representation. We define  
$\Xi$ as a {\em generating sheaf} of $\sS$.  
The readers may understand that the generating sheaf is introduced to deal with some stacky issues and in order not to loose stacky information like finite group gerbes over schemes. 

Let us fix a polarization $\sO_S(1)$ on the coarse moduli space $S$. 
Choose a generating sheaf $\Xi$, and for a coherent sheaf $E$ on $\sS$, the modified Hilbert polynomial is defined by:
$$H_{\Xi}(E,m) = \chi(\sS,E\otimes \Xi^{\vee}\otimes p^*\sO_S(m)).$$
Then we can write down 
$$H_{\Xi}(E,m) =\sum_{i=0}^d\alpha_{\Xi, i}\frac{m^i}{i!},$$
where $d=\dim(E)$ is the dimension of the sheaf $E$. 
The reduced Hilbert polynomial for pure sheaves, which 
we will denote by $h_{\Xi}(E)$;  is the monic polynomial $\frac{H_{\Xi}(E)}{\alpha_{\Xi, d}}$ with rational coefficients. 
Then let $E$ be a pure coherent sheaf, it is semistable if for every proper subsheaf
$F \subset E$ we have  $h_{\Xi}(F) \leq h_{\Xi}(E)$ and it is stable if the same is true with a strict inequality.
Fixing a modified Hilbert polynomial $H$, the moduli stack of semistable coherent sheaves 
 on $\sS$ is constructed in \cite{Nironi}.  If the stability and semistability coincide, the coarse moduli space 
$\rM:=\rM^{\Xi}_H(\sS)$ is a projective scheme. 

Our goal is to study the moduli stack of Higgs pairs $(E,\phi)$ on $\sS$, where $E$ is a torsion free coherent sheaf with 
rank $\rk >0$ and $\phi\in \Hom(E, E\otimes K_\sS)$ is a Higgs field.  The Hilbert polynomial $h_{\Xi}(E)$ can be similarly defined by choosing a generating sheaf $\Xi$ for $\sS$.  Then $(E,\phi)$  is semistable if for every proper $\phi$-invariant subsheaf
$F \subset E$ we have  $h_{\Xi}(F) \leq h_{\Xi}(E)$.  The moduli stack of stable Higgs pairs on $\sS$ with modified Hilbert polynomial $H$ has a coarse moduli space 
$\N:=\N_H$ which is a quasi-projective scheme.  

Let $\sX:=\mbox{Tot}(K_{\sS})$ be the total space of the  canonical line bundle of $\sS$, then $\sX$ is a smooth Calabi-Yau threefold DM stack.  
By spectral theory, the category of Higgs pairs on $\sS$ is equivalent to the category of torsion sheaves $\sE_\phi$  on $\sX$ supporting on $\sS\subset \sX$.
Let $\pi: \sX\to \sS$ be the projection, then  the bullback $\pi^*\Xi$ is a generating sheaf for $\sX$.  One can take a projectivization $\overline{\sX}=\proj (K_{\sS}\oplus\sO_{\sS})$, and consider the moduli space of stable torsion sheaves 
on $\overline{\sX}$ with modified Hilbert polynomial $H$. The  part that is supported on   $\sX$ is isomorphic to the moduli space of stable Higgs pairs $\N$ on $\sS$ with modified Hilbert polynomial $H$. 

\subsection{Perfect obstruction theory and the Vafa-Witten invariants}

To construct the perfect obstruction theory,  we also take the moduli space $\N^{\perp}_{L}$ of stable Higgs pairs 
$(E,\phi)$ with fixed determinant $L$ and trace-free  $\phi$.   For a Higgs pair $(E,\phi)$ on $\sS$, and the associated torsion sheaf 
$\sE_\phi$ on $\sX$, the deformation and obstruction of $\sE_\phi$ are controlled by 
$$\Ext^1_{\sX}(\sE_{\phi}, \sE_{\phi}), \quad \Ext^2_{\sX}(\sE_{\phi}, \sE_{\phi})$$
respectively; and also we have an exact triangle 
$$R\cHom(\sE_\phi,\sE_\phi)\to R\cHom_{\sS}(E, E)\stackrel{\circ\phi-\phi\circ}{\longrightarrow}R\cHom_{\sS}(E\otimes K_{\sS}^{-1},E)$$
relating the deformation and obstruction theory of $\sE_{\phi}$ to the Higgs pairs $(E,\phi)$.  We found that all the arguments as in 
\cite[\S 5]{TT1} work for smooth DM stacks $\sS$. We write down some parts in the appendix for $\sS$, and for  the more precise details we refer to \cite[\S 5]{TT1}. 
Therefore  $\N^{\perp}_{L}$  admits a symmetric obstruction theory and the $\cc^*$ acts on $\N^{\perp}_{L}$ with compact fixed loci,   we define 
\begin{equation}\label{eqn_localized_SU_DM_invariants_intr}
\VW^L_{H}(\sS)=\int_{[(\N_L^{\perp})^{\cc^*}]^{\vir}}\frac{1}{e(N^{\vir})}
\end{equation}
as the $SU(\rk)$-Vafa-Witten invariants. 
We have the Behrend function in this case 
and define
\begin{equation}\label{eqn_localized_SU_DM_invariants_vw_intr}
\vw^L_{H}(\sS)=\chi(\N^{\perp}_{L}, \nu_{\N^{\perp}_{L}})
\end{equation}
as the weighted Euler characteristic. 

\subsection{Calculations}

For surface DM stacks, the essential part is to calculate the Vafa-Witten invariants for some examples to see if one can get different phenomenon comparing with the case of  smooth projective surfaces.  First the moduli space $\N_L^{\perp}$ admits a $\cc^*$-action induced by the $\cc^*$-action on the total space $\sX$ of the canonical line bundle $K_{\sS}$.  There are two type of $\cc^*$-fixed loci on $\N_L^{\perp}$ .  The first one corresponds to the $\cc^*$-fixed Higgs pairs $(E,\phi)$ such that the Higgs fields $\phi=0$.  The fixed locus is just the moduli space $\rM_{L}(\sS)$ of stable torsion free sheaves $E$ on $\sS$. This is called the {\em Instanton Branch} as in \cite{TT1}.  The second type corresponds to $\cc^*$-fixed Higgs pairs $(E,\phi)$ such that the Higgs fields $\phi\neq 0$.  This case mostly happens when the surfaces $\sS$ are general type, and this component is called the {\em Monopole} branch.  See \S \ref{subsec_CStar_fixed_locus} for more details.

We include a short calculation  for 
$$\pp:=\pp(1,1,2),$$
the weighted projective plane with only one stacky point $[0,0,1]\in \pp(1,1,2)$. The  local orbifold structure around this point is given by type $A_1$ singularity 
$[\cc^2/\mu_2]$. The inertia stack $I\pp=\pp\sqcup \pp(2)$, where $\pp(2)=B\mu_2$ corresponds to the nontrivial element $\zeta\in \mu_2$. We can choose generating sheaf 
$\Xi=\sO_{\pp}\oplus \sO_{\pp}(1)$.  The moduli space of stable torsion free sheaves can be studied by toric method in \cite{GJK}. 
In this case the vanishing theorem as in Proposition \ref{prop_K_S_fixed_locus} shows that $\VW(\pp)$ is just the signed virtual Euler number.  Moreover since $K_{\pp}<0$, the obstruction sheaf $\Ext^2(E,E)_0=0$ and the moduli space 
$\rM_L$ is smooth,  so $\VW(\pp)=\vw(\pp)=(-1)^{\dim T_{\rM_L}}\chi(\rM_L)$ is the signed Euler number.

When fixing a $K$-group class in $K_0(\pp)$, the Hilbert polynomial is fixed.  We introduce orbifold Chern character 
$\widetilde{\Ch}: K_0(\pp)\to H^*(I\pp)$ by taking values in the Chen-Ruan orbifold cohomology and let $\widetilde{\Ch}=(\widetilde{\Ch}_1, \widetilde{\Ch}_\zeta)$ be the components.
We use the notation of codegree 
$$\left(\widetilde{\Ch}_g\right)^k:=\left(\widetilde{\Ch}_g\right)_{\dim \pp_{g}-k}\in H^{\dim \pp_g-k}(\pp_g),$$
as in (\ref{eqn_Chern_character_degree}). Here $\pp_g=\pp$ if $g=1$ and $\pp_g=B\mu_2$ if $g=\zeta$.
So if we fix $\left(\widetilde{\Ch}_1\right)^2=2$, the rank, $\left(\widetilde{\Ch}_1\right)^1=c_1(\pp)$ the first Chern class, and let $q_1$ be the variable keeping track of $E$ such that  $\left(\widetilde{\Ch}_1\right)^0=c_2(E)$, and $q_2$ the variable keeping track of 
$\left(\widetilde{\Ch}_\zeta\right)^0$.  From Example 2 of \cite[\S 7.2]{GJK}, 
$\left(\widetilde{\Ch}_\zeta\right)^0$ only takes values $-2, 0, 2$, we choose $0$. 
The tangent dimension 
$\dim \Ext^1_{\pp}(E,E)$ is $4c_2(E)-c_1^2-3=4n-c_1^2-3$. 
For simplicity we let $q_1=q, q_2=1$, and  let $\VW_n(\pp)=(-1)^{-c_1^2-3}\chi(\rM^n_{L})$ be the Vafa-Witten invariant for the moduli space of stable rank $2$ torsion free sheaves $E$ on $\pp$ with $c_2(E)=n$. Then from Lemma 7.4, Example 1 in 
\cite[\S 7.1]{GJK},  Example 2 of \cite[\S 7.2]{GJK},  up to a sign we have:
\begin{prop}
We have the following generating series for the rank two Vafa-Witten invariants for $\pp$:
\begin{equation}\label{eqn_VW_P112}
\sum_{n\geq 0}\VW_n(\pp) q^n=
\left(\frac{q^{\frac{1}{6}}}{\eta(q)^4}\theta_3(q)\right)^2\cdot \sum_{(w_1, w_2, w_3)\in C_{c_1}}q^{\frac{1}{4}c_1^2+\frac{1}{4}\sum_{i=1}^3w_i^2-\frac{1}{2}\sum_{1\leq i\leq j\leq 3}w_iw_j},
\end{equation}
where 
$$C_{c_1}:=\left\{(w_1,w_2,w_3)\in \zz_{>0}^3 :   2| c_1+\sum_{i=1}^3w_i; 2| w_2; w_i<w_j+w_k\right\}, \forall \{i,j,k\}=\{1,2,3\};$$
and here
$$\frac{q^{\frac{1}{6}}}{\eta(q)^4}\theta_3(q)=\frac{1}{\prod_{k>0}(1-q^k)^4}\sum_{k\in\zz}q^{k^2},$$
and $\theta_3$ is the Jacobi theta function and $\eta(q)$ is the Dedekind eta function. 
\end{prop}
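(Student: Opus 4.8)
The plan is to reduce \eqref{eqn_VW_P112} to a signed Euler characteristic and then import the toric fixed-point enumeration of \cite{GJK}. Since $K_{\pp}<0$, Proposition \ref{prop_K_S_fixed_locus} applies: the trace-free obstruction $\Ext^2_{\pp}(E,E)_0$ vanishes and the monopole branch of the $\cc^*$-fixed locus is empty, so every $\cc^*$-fixed Higgs pair has $\phi=0$. Hence $(\N_L^{\perp})^{\cc^*}$ is the smooth instanton branch $\rM_L^n$ of stable torsion-free sheaves with $c_2=n$, the virtual class degenerates to the ordinary fundamental class up to sign, and the localized integral collapses to $\VW_n(\pp)=\vw_n(\pp)=(-1)^{\dim\rM_L^n}\chi(\rM_L^n)$. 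Because the tangent dimension is $\dim\Ext^1_{\pp}(E,E)=4n-c_1^2-3$ with $4n$ even, the sign is $(-1)^{-c_1^2-3}$, matching the normalization defining $\VW_n(\pp)$. Everything therefore reduces to computing $\chi(\rM_L^n)$ and organizing it in $n$.

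For the Euler characteristic I would run torus localization for the big torus $T$ acting on $\pp=\pp(1,1,2)$: by Bialynicki-Birula one has $\chi(\rM_L^n)=\chi((\rM_L^n)^{T})$, and the $T$-fixed stable sheaves are classified combinatorially over the three maximal cones, exactly as in \cite{GJK}. Such a fixed sheaf decomposes into (i) \emph{boundary} data along the toric divisors, recording $c_1$, the fixed determinant $L$, and the twisted-sector orbifold Chern character at the stacky point $[0,0,1]$ whose local model is $[\cc^2/\mu_2]$; and (ii) finite-colength \emph{point} configurations at the $T$-fixed points, analogous to the nested-partition data in the Hilbert-scheme count. Note the orbifold Euler characteristic $\chi_{\mathrm{CR}}(\pp)=\chi(\pp)+\chi(B\mu_2)=3+1=4$, which is the power that will govern the point-configuration generating function.

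Grading the point configurations by their contribution to $c_2=n$ should produce the colored-partition series $\prod_{k>0}(1-q^k)^{-4}$, while the rank-two boundary and twisted-sector data contribute a theta factor, so that one recovers
\begin{equation*}
\frac{q^{1/6}}{\eta(q)^4}\theta_3(q)=\frac{1}{\prod_{k>0}(1-q^k)^4}\sum_{k\in\zz}q^{k^2};
\end{equation*}
squaring this for the rank-two normalization and multiplying by the residual lattice sum over admissible boundary vectors $(w_1,w_2,w_3)$ gives the right-hand side of \eqref{eqn_VW_P112}. The constraints defining $C_{c_1}$ are then read off the dictionary: the parity $2\mid c_1+\sum_i w_i$ encodes compatibility of the boundary degrees with the fixed determinant, $2\mid w_2$ records the $\mu_2$-weight forced at the stacky chart, and the triangle inequalities $w_i<w_j+w_k$ are exactly the Nironi slope-stability inequalities (with respect to $\Xi=\sO_{\pp}\oplus\sO_{\pp}(1)$ and the modified Hilbert polynomial) selecting genuinely stable fixed sheaves; the quadratic exponent $\tfrac14 c_1^2+\tfrac14\sum_i w_i^2-\tfrac12\sum_{i\leq j}w_iw_j$ is the expression of $c_2$ in terms of the boundary data, so that collecting by $n$ reproduces the stated $q$-powers.

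The main obstacle is the second step: pinning down the precise correspondence between $T$-fixed stable sheaves on $\pp(1,1,2)$ and the vectors in $C_{c_1}$, and verifying both that the stability condition translates exactly into the triangle inequalities and that $c_2$ is given by the stated quadratic form, so that the $n$-grading is correct. Once the classification of \cite[Lemma 7.4, \S 7.1--7.2]{GJK} is in hand, the remaining work is routine generating-function bookkeeping: identifying $\prod_{k>0}(1-q^k)^{-4}$ and $\theta_3$ with the eta/theta expression, squaring, and assembling the product with the lattice sum.
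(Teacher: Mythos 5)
Your proposal is correct and follows essentially the same route as the paper: since $\deg K_{\pp}<0$, Proposition \ref{prop_K_S_fixed_locus} and the vanishing of $\Ext^2_{\pp}(E,E)_0$ reduce $\VW_n(\pp)$ to the signed Euler characteristic $(-1)^{-c_1^2-3}\chi(\rM^n_L)$, and the generating function for these Euler characteristics is then imported from the toric fixed-point classification of \cite{GJK} (Lemma 7.4, Examples in \S 7.1--7.2), exactly as the paper does. The paper does not re-derive the combinatorial dictionary (triangle inequalities as stability, the quadratic form as $c_2$) any more than you do, so deferring that step to \cite{GJK} is consistent with the paper's own proof.
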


From \cite{GJK}, the second part of 
(\ref{eqn_VW_P112}) is a holomorphic part of a modular form. For any $\Delta > 0$, let $H(\Delta)$ be the number of (equivalence classes of) positive definite integer binary quadratic forms $AX^2 + BXY + CY^2$ with
discriminant $B^2-4AC =-\Delta$ and weighted by the size by the following:  forms equivalent to $\lambda(X^2 + Y^2)$ and $\lambda(X^2 + XY + Y^2)$ are counted with weights $1/2$ and $1/3$ respectively. Then 
Theorem 1.2 of \cite{GJK} tells us that the sum in (\ref{eqn_VW_P112}) is: 
$$\sum_{n=1}^{\infty}2H(8n-1) q^{\frac{1}{4} - 2n}$$
if $c_1$ is odd and 
$$\sum_{n=1}^{\infty}(H(4n) + 2H(n)- \frac{1}{2}\sigma_0(n))q^{-n}- \sum_{n=1}^{\infty}\sigma_0(n)q^{-4n}$$
if $n$ is even, where $\sigma_0(n)$ is the sum of divisors function.  They are the holomorphic part of a modular form.  

We also perform calculations in the rank $\rk=2$ case for an $r$-th root stack $\sS$ over a general type quintic surface $S$ along a smooth divisor 
$C\in |K_{S}|$;  and quintic surfaces $\sS$ with ADE isolated singularities.  The calculations are for the second type $\cc^*$-fixed components such that in the fixed loci $(E,\phi)\in \rM^{(2)}$ the Higgs field $\phi\neq 0$.  The fixed loci $\rM^{(2)}$ is expressed as union of nested Hilbert schemes, see Proposition \ref{prop_second_fixed_loci_Hilbert_scheme}, and Proposition \ref{prop_second_fixed_loci_Hilbert_scheme_isolated}. 

In the first special case that one ideal sheaf is the structure sheaf, the nested Hilbert scheme is just the Hilbert scheme of points on the surface DM stack $\sS$ with fixed $K$-group class $\mathbf{c}\in K_0(\sS)$.  We do the calculations for such Hilbert schemes on orbifolds and get similar  results as in \cite[Proposition 8.22,  \S 8.5]{TT1}, see Theorem \ref{thm_generating_function_root_quintic} and Theorem \ref{thm_generating_function_ADE_quintic}.

Let $\sS$ be a quintic orbifold surface with isolated ADE singularities $P_1,\cdots, P_s$. 
In this case we get the same result as in \cite[\S 8.5]{TT1}.
The reason that the results from the component of Hilbert scheme of points  for such $\sS$ are the same as in 
 \cite[\S 8.5]{TT1}  may be explained as deformation invariance since smooth quintic surfaces can be deformed into quintic surfaces with ADE isolated singularities.  
The canonical line bundle $K_{\sS}$ for $\sS$  will have trivial $G_i$ representation around any isolated ADE singularity point 
$P_i$ so that locally $\sS$ is isomorphic to $[\cc^2/G_i]$.  This can be taken as 
the other  reason for the equality of the results  as in  \cite[\S 8.5]{TT1} since the canonical divisor curve $C\in |K_{\sS}|$ will not touch the isolated singular points $P_1,\cdots, P_s$.
We also do a second special case calculations for the nested Hilbert schemes for $\sS$ with ADE isolated singularities, and this time the result of the integral  is different from \cite[\S 8.7]{TT1}.

Finally we discuss the relation of the Vafa-Witten invariants of a quintic orbifold surface with ADE singularities and their  {\em crepant resolutions}. 

\subsection{Related and future research}

As we have already pointed out, our study for the Vafa-Witten invariants for surface DM stacks is definitely motivated by the beautiful work of Tanaka-Thomas \cite{TT1}, \cite{TT2}.  Our original motivation is to expect that surface DM stacks can give different phenomenon for the invariants. 
As mentioned earlier, it is interesting to define generalized invariants $\vw$ as in \cite{TT2} and study the wall-crossing formula of Joyce-Song, and we leave this as a future research. 
Special examples of surface DM stacks are cyclic gerbes over smooth surfaces, and in  \cite{Jiang_2019} Jiang  has developed a theory of twisted Vafa-Witten invariants  using gerbe twisted sheaves and proved the S-duality conjecture of Vafa-Witten for K3 surfaces.   

Let $S$ be a projective surface with isolated ADE type singularities.  It is interesting to calculate the Vafa-Witten invariants using the techniques developed in \cite{GT}, and compare with the Vafa-Witten invariants with its crepant resolution $\widetilde{S}$. 
In particular, if $S$ is a smooth projective K3 surface, and let $[S/\mu_N]$ be the quotient stack, where $\mu_N$ is a cyclic group of order $N$ acting as symplectic automorphisms on $S$.  From \cite{Huybrechts}, the action only has finite isolated rational double points singularities and its crepant resolution is a smooth K3 surface. It is interesting to study the Vafa-Witten invariants of the quotient stack $[S/\mu_N]$ and compare with the Vafa-Witten invariants for $S$ calculated in \cite{TT2}. 
We leave these as future projects too. 

We want to make an extra effort to the $r$-th root stacks on surfaces.  Let $(S,D)$ be a pair, where $D\subset S$ be a smooth divisor curve.  Let $\sS:=\sqrt[r]{(S,D)}$ be the  $r$-th root stack and let $p: \sS\to S$ be the map to its coarse moduli space.  Let $\sD=p^{-1}(D)$, and we choose a generating sheaf $\Xi=\oplus_{i=0}^{r}\sO_{\sS}(\sD^{\frac{i}{r}})$.
The moduli space of stable sheaves $E$ on $\sS$ with a Hilbert polynomial $H$ is isomorphic to the moduli space of parabolic stable sheaves $(E_\bullet)$ on $(S,D)$ constructed by  Maruyama and Yokogawa in \cite{MY}.  In 
\cite{Jiang3} we expect that the same result holds for moduli space of Higgs pairs $(E,\phi)$.  Parabolic stable sheaves on $(S,D)$ has some relations with the S-duality conjecture, which has been studied by Kapranov \cite{Kapranov}, and we hope to put these ideas into the Vafa-Witten theory to check  the S-duality results of Kapranov.

\subsection{Outline} 
The paper is organized as follows.  We review the construction of the moduli space of semistable sheaves and the moduli space of Higgs sheaves on a surface DM stack $\sS$  in \S \ref{sec_moduli_space}, where in \S \ref{subsec_surface_DM} we recall the basic properties of surface DM stacks and give several interesting examples;  and in 
\S \ref{subsec_moduli_Higgs_pairs} we study the moduli construction of stable sheaves and stable Higgs pairs on a smooth projective DM stacks $\sS$. 
We work on the deformation theory of Higgs pairs on $\sS$ in \S \ref{sec_VW} and define the Vafa-Witten invariants $\VW$ on a surface DM stack $\sS$ by the obstruction theory constructed in the Appendix.   
In \S \ref{sec_calculations} we do the main calculations on the surface DM stacks, where in \S \ref{subsec_root_stacks} we calculate the case of the $r$-th root stack $\sS$ over a smooth quintic surface $S$; and in \ref{subsec_quintic_ADE} we deal with the quintic surfaces with ADE singularities. 

\subsection{Convention}
We work over $\cc$ throughout of the paper.  For a surface DM stack $\sS$, we mean a smooth two dimensional Deligne-Mumford stack over $\cc$. 
Let us fix some notations.   We always use Roman letter $E$ to represent a coherent sheaf on a projective DM stack or a surface DM stack $\sS$, and use curl latter $\sE$ to represent the sheaves on the total space Tot$(\sL)$ of a line bundle $\sL$ over $\sS$. 

We reserve {\em $\rk$} for the rank of the torsion free coherent sheaves $E$, and use $\sqrt[r]{(S,C)}$ for the $r$-th root stack associated with the pair $(S, C)$ for a smooth projective surface and $C\subset S$ a smooth connected divisor.

\subsection*{Acknowledgments}

Y. J. would like to thank Huai-Liang Chang, Amin Gholampour,  Martijn Kool, Richard Thomas and Hsian-Hua Tseng for valuable discussions on the Vafa-Witten invariants.    
The first author would like to thank Hong Kong University of Science and Technology for hospitality where part of the work is done. 
This work is partially supported by  NSF DMS-1600997.


\section{Moduli space of semistable Higgs pairs on surface DM stacks}\label{sec_moduli_space}

\subsection{Surface DM stacks, examples}\label{subsec_surface_DM}
The basic knowledge of stacks can be found in the book \cite{LMB}.  

Let $\sX$ be a smooth DM stack. Roughly speaking the stack $\sX$ is a fibre  category  by groupoids.  An interesting DM stack is the global quotient stack $[M/G]$, where $M$ is a smooth scheme and $G$ is a finite group acting on $M$. The quotient stack $[M/G]$ classifies principal $G$-bundles over the category of schemes, together with a $G$-equivariant morphism to $M$.  Not every stack is a global quotient, but most interesting DM stacks are locally global quotient, i.e., for any point $x\in\sX$, there exists an open \'etale neighborhood $x\in \sU$ such that $\sU\cong [M/G]$ is a global quotient.  We mainly work on locally quotient DM stacks. 

Let $f: \sX\to \yY$ be a morphism between two DM stacks. $f$ is called ``representable" if every morphism $g: S\to \yY$ from a scheme $S$, the fibre product $S\times_{g, \yY, f}\yY$ is a scheme. In particular, any morphism from a scheme to $\yY$ is representable. 

\begin{defn}\label{defn_inertia_stack}
Let $\sX$ be a smooth DM stack. The inertia stack $I\sX$ associated to $\sX$ is defined to be the fibre product:
$$I\sX:=\sX\times_{\Delta, \sX\times \sX}\sX$$
where $\Delta: \sX\to \sX\times\sX$ is the diagonal morphism.
\end{defn}

The objects in the category underlying $I\sX$ is: 
$$\Ob(I\sX)=\{(x,g)| x\in \Ob(\sX), g\in \Aut_{\sX}(x)\}.$$
\begin{rmk}
\begin{enumerate}
\item There exists a morphism $q: I\sX\to \sX$ given by $(x,g)\mapsto x$;
\item There exists a decomposition 
$$I\sX=\bigsqcup_{r\in\nn}\Hom(\mbox{Rep}(B\mu_r, \sX)),$$
hence $I\sX$ can be decomposed into connected components, and 
$$I\sX=\bigsqcup_{i\in\mathcal{I}}\sX_i$$
Here $\mathcal{I}$ is the index set.  We denote the component $\{(x, \id)|x\in \Ob(\sX), \id\in \Aut_{\sX}(\sX)\}$ by $\sX_0=\sX$. 
\end{enumerate}
\end{rmk}
For instance let $\sX=[M/G]$ be the global quotient DM stack.  Then $\mathcal{I}=\{(g)| g\in G\}$, where $(g)$ is the conjugacy class.  The inertia stack $I\sX=\bigsqcup_{(g)}\sX_{(g)}$, and $\sX_{(g)}=[M^g/C(g)]$, where $M^g$ is the $g$ fixed locus of $M$ and $C(g)$ is the centralizer of $g$. \\

\textbf{Surface DM stack examples}
Throughout of the paper we let $\sS$ be a smooth two dimensional DM stack, which we call a ``surface DM stack". 
We review some interesting examples of surface DM stacks. 

\subsubsection{Orbifold K3 surfaces}

An example of projective smooth K3 surface $S$ is the smooth quartic surface in $\pp^3$.  Let $\zz_2$ or $\zz_n$ act on 
$\pp^3$ by $\zeta[x_0:x_1:x_2:x_3]=[x_0:\zeta\cdot x_1, \zeta\cdot x_2:\zeta\cdot x_3]$.  Then $\sS=[S/\zz_2]$ is an orbifold K3 surface. 

Another interesting orbifold K3 surface is orbifold Kummer surface $\sS=[\mathbb{T}^4/\zz_2]$, where 
$\mathbb{T}^4$ is the $4$-dimensional real torus and $\tau\in \zz_2$ acts on $\mathbb{T}^4$ by 
$$\tau(e^{it_1}, e^{it_2}, e^{it_3}, e^{it_4})=(e^{-it_1}, e^{-it_2}, e^{-it_3}, e^{-it_4}).$$
This is a complex two dimensional orbifold K3 surface with orbifold fundamental group $\zz^4\rtimes \zz_2$, the semi-direct product.  
This orbifold Kummer surface has $16$ isolated orbifold points.  After taking resolution $\widetilde{S}\to \overline{S}$ there are $16$ exceptional $(-2)$-curves on the smooth K3 surface $\widetilde{S}$.
Of course the finite group $\zz_2$ can be placed by $\zz_n$, $D_n$, $E_6, E_7, E_8$ to get ADE singularities on $\sS$.  Orbifold Kummer surfaces and their resolutions have applications in mirror symmetry, see \cite{NW}.

\subsubsection{Automorphism of K3 surfaces}

Let $S$ be a smooth projective K3 surface with a holomorphic symplectic form 
$\sigma\in H^0(S, \Omega_S^2)$. An automorphism $g$ of $S$ is called a {\em symplectic automorphism} if $\sigma$ preserves the symplectic form of $S$, i.e. $g^*\sigma=\sigma$.  Our reference is Daniel Huybrechts's book \cite{Huybrechts}.  The classification of finite order symplectic automorphisms of $S$ gives abelian group actions on  
$S$.  Let $G$ be such a finite abelian group, then the quotient stack $\sS:=[S/G]$ is an orbifold K3 surface, and from \cite{Huybrechts},  there are only finite isolated stacky points in $\sS$, which are rational double points.  
A list of the number of fixed points and the corresponding finite abelian groups can be found in 
\cite[\S 15.1]{Huybrechts}. 
For instance, in the case of cyclic group $\mu_2$ of order two, the orbifold K3 surface $[S/\mu_2]$ is called the 
{\em Nikulin involution}. There are eight isolated $A_1$-singular points. 

The global Torelli theorem tells us that  the symplectic automorphism $g$ is uniquely determined by its induced action on $H^2(S,\zz)$. By  \cite{Huybrechts} the action of $g$ on the abstract lattice $H^2(S,\zz)$ depends up to an orthogonal transformation of the lattice only on the order $|g|$. 
Let $U=\mat{cc}0&1\\1&0\rix $ denote by the hyperbolic lattice. Recall that
$$
\Lambda:=H^2 (S , \zz)=U^3\oplus E_8(-1)^2.$$
The invariant lattice with respect to $g$ is
$$\Lambda^g =\{v\in \Lambda |g\cdot v=v\}.$$
The coinvariant lattice of $g$ is the orthogonal complement of the invariant lattice:
$$\Lambda_g =(\Lambda^g)^{\perp}\subset H^2(S,\zz).$$
In the case of  Nikulin involution $\sS:=[S/\mu_2]$.  The action of the generator $g\in \mu_2$ on $\Lambda$ is trivial on $U^{3}$ and interchanges the two copies of $E_8(-1)$. The invariant and co-invariant lattices are
$$\Lambda^g = U^3 \oplus E_8(-2), \Lambda_g = E_8(-2),$$
where  $E_8(-2)$ represents for the diagonal and the anti-diagonal in $E_8(-1)^2$ respectively. 
These invariants of lattices play an important role for the study of Donaldson-Thomas invariants of 
$(S\times E)/G$ in \cite{BO}, where $E$ is an elliptic curve with trivial $G$ action, and the authors call it a CHL Calabi-Yau threefold. 
We hope that the  invariant lattice and co-invariant lattice play a role in the calculation of Vafa-Witten invariants.

\subsubsection{Quintic surfaces with ADE singularities}\label{subsec_quintic_surface}

Let $S\subset \pp^3$ be a smooth quintic surface.  Then $S$ is a Horikawa surface with invariants
\begin{equation}\label{eqn_invariants_Horikava_surface}
\begin{cases}
p_g=h^0(K_S)=4;\\
q=h^1(\sO_S)=0;\\
c_1(S)^2=5
\end{cases}
\end{equation}
These are simple cases of surface of general type. The moduli space of Horikawa surfaces with invariants (\ref{eqn_invariants_Horikava_surface}) is very complicated.  From \cite{Gallardo}, \cite{Horikava}, the moduli space of quintic surfaces forms an irreducible component of the moduli space of general type surfaces with invariants (\ref{eqn_invariants_Horikava_surface}), where there are two irreducible components for the moduli space. The complete KSBA \cite{KSB}, \cite{Alexeev} moduli of the quintic surface is still unknown, see \cite{Rana} for some construction of boundary divisors. The component containing smooth quintic surfaces in $\pp^3$ has some quintic surfaces with at worst ordinary double point singularities, which are classified by ADE-singularities:
\begin{equation}\label{eqn_ADE_singularities}
\begin{cases}
A_n: x^2+y^2+z^{n+1}=0 (n\geq 1);\\
D_n: x^2+y(y^{n-2}+z^2)=0. (n\geq 4);\\
E_6: x^2+y^3+z^4=0;\\
E_7:  x^2+y^3+z^3y=0;\\
E_8:  x^2+y^3+z^5=0.
\end{cases}
\end{equation}
We can take such quintic surfaces  as DM stacks, with the singular points $p$ by $ADE$ type finite subgroups in $SU(2)$ acting on the local neighborhood $\cc^2$ around $p$.  For instance, let $(\sS, p_1, \cdots, p_l)$ be a surface DM stack, such that around the point $p_i$, we have $A_{n_i}$-type singularity, i.e., there exists an open neighborhood $\sU_{p_i}\subset \sS$ and $\sU_{p_i}\cong [\cc^2/\mu_{n_i+1}]$, where $\mu_{n_i+1}$ acts on $\cc^2$ by
$$\lambda\cdot (x,y)=(\lambda x, \lambda^{n_i}y).$$
Then $(\sS, p_1, \cdots, p_l)$ is a surface DM stack with stacky points $p_1,\cdots, p_l$.  As in \cite{Toda}, \cite{GNS}, the Hilbert scheme of points on such surface DM stacks $\sS$ are studied and the generating function of the Euler characteristic of the Hilbert scheme has been calculated directly in \cite{GNS}, and by wall crossing in \cite{Toda} motivated by S-duality.  We will see later in \cite{Jiang3}, that the their calculations can go into the invariants 
$\vw$ for such DM stacks.  Smooth quintic surfaces provide examples in the last section of \cite{TT1}, where they did explicit calculations for the Vafa-Witten invariants.

\subsubsection{Weighted projective planes}\label{subsec_weighted_projective_plane}

Let $\pp(a,b,c):=[\cc^3\setminus\{0\}/\cc^*]$ be the quotient stack, where $\cc^*$ acts on $\cc^3$ by 
$$\lambda[x:y:z]=[\lambda^a x: \lambda^b y: \lambda^c z].$$
Then $\pp(a,b,c)$ is the weighted projective stack.  These are interesting surface DM stacks such that their canonical line bundles are negative. For instance $\pp(2,2,2)$ is a $\mu_2$-gerbe over $\pp^2$ with canonical line bundle $\sO(-6)$. 

The moduli space of stable torsion free sheaves on $\pp(a,b,c)$ has been studied in \cite{GJK}.  The formula calculated there is also related to the S-duality, therefore to the Vafa-Witten invariants of such DM stacks. 

\subsubsection{Root stacks}

Root stacks provide a class of interesting DM stacks whose construction is obtained by taking the roots of line bundles with sections.  In \cite{Jiang3} we will study  the small Vafa-Witten invariants for root stacks and their relations to the S-duality conjecture. 

Let $S$ be a smooth projective surface, and $D\subset S$ be a smooth or simple normal crossing divisor. 
Fix an integer $r\geq 1$, the root stack $\sS:=\sqrt[r]{(S,D)}$ is constructed in \cite{Cadman}. 
Let $\sO_S(D)$ be the line bundle associated with $X$. Recall that there is an equivalence of categories between the category of line bundles over $S$ and the category of morphisms $S\to B\Gm$. Also there is an equivalence between the category of $(L, s)$ with $L$ a line bundle on $S$ and $s$ a global section on $L$, and the category of morphisms
$$S\to [\aaa^1/\Gm]$$
where $\Gm$ acts on $\aaa^1$ by multiplication, see \cite[Example 5.13]{OS03}. 

Then the line bundle $(\sO_S(D), s_D)$ defines a morphism 
$$S\to  [\aaa^1/\Gm].$$
Let $\Theta_r:  [\aaa^1/\Gm]\to  [\aaa^1/\Gm]$ be the morphism of stacks given by 
$$x\in \aaa^1\mapsto x^r\in \aaa^1; \quad  t\in \Gm\mapsto t^r\in \Gm,$$
which sends $(\sO_S(D), s_D)$ to $(\sO_S(D)^{\otimes r}, s^r_D)$.
\begin{defn}(\cite{Cadman})\label{defn_root_stack}
Let $\sS:=\sqrt[r]{(S,D)}$ be the stack obtained by the fibre product 
\[
\xymatrix{
\sqrt[r]{(S,D)}\ar[r]\ar[d]_{\pi}& [\aaa^1/\Gm]\ar[d]^{\Theta_r}\\
S\ar[r]^--{(\sO_S(D), s_D)}& [\aaa^1/\Gm].
}
\]
We call $\sS=\sqrt[r]{(S,D)}$ the root stack obtained from $S$ by the $r$-th root construction. 
\end{defn}
\begin{rmk}
$\sS=\sqrt[r]{(S,D)}$ is a smooth DM stack with stacky locus $\sD:=\pi^{-1}(D)$, and $\sD\to D$ is a $\mu_r$-gerbe over $D$ coming from the line bundle $\sO_S(D)|_{D}$. 

For example, the weighted projective stack $\pp(1, r, r)$ in Section \ref{subsec_weighted_projective_plane} is a root stack by taking the $r$-th root construction on $\pp^2$ with divisor $\pp^1\subset \pp^2$. 
\end{rmk}

\subsection{Moduli space of semistable sheaves on surface DM stacks}\label{subsec_moduli_Higgs_pairs}

In this section we review the construction of the moduli space of semistable sheaves and the moduli space of stable Higgs sheaves $(E,\phi)$ on a projective DM stack $\sS$. The construction of the moduli space was studied by F. Nironi \cite{Nironi}. 

\subsubsection{Generating sheaf and the modified Hilbert polynomial}

Let $S$ be a smooth projective scheme.  Recall that to define the Hilbert polynomial $H$ on $S$ we need a polarization $\sO_S(1)$. In the case of a smooth DM stack $\sS$, one can choose the polarization $\sO_S(1)$ on its coarse moduli space $p: \sS\to S$. However, there are no very ample invertible sheaves on a stack.  We need to loose the condition to choose some locally free sheaves on $\sS$ so that they behave like very ample sheaves. This is the notion of {\em generating sheaves} in \cite{OS03}. 

\begin{defn}\label{defn_very_ample}
A locally free sheaf $\Xi$ on $\sS$ is $p$-very ample if  for every geometric point of $\sS$ the representation of the stabilizer group at that point contains every irreducible representation of the stabilizer group. 
\end{defn}

\begin{defn}\label{defn_functors}
Let $\Xi$ be a locally free sheaf  on $\sS$. We define a functor
$$F_{\Xi}: D\Coh_{\sS}\to D\Coh_{S}$$ by
$$F\mapsto p_*\cHom_{\sO_S}(\Xi, F)$$
and a functor 
$$G_{\Xi}: D\Coh_{S}\to D\Coh_{\sS}$$ by
$$F\mapsto p^*F\otimes \Xi.$$
\end{defn}
\begin{rmk}
As in \cite{Nironi}, \cite{OS03}, the functor $F_{\Xi}$ is exact since the dual $\Xi^{\vee}$ is locally free and the pushforward $p_*$ is exact.  The functor $G_{\Xi}$ is not exact unless $p$ is flat. For instance, if $p$ is a flat gerbe or a root stack, it is flat. 
\end{rmk}

\begin{defn}\label{defn_generator}
A locally free sheaf $\Xi$ is said to be a generator for the quasi-coherent sheaf $F$ if the left adjunction morphism of the identity $p_*F\otimes \Xi^{\vee}\stackrel{\id}{\longrightarrow}p_*F\otimes \Xi^{\vee}$
$$\theta_{\Xi}(F): p^*p_*\cHom_{\sO_{\sS}}(\Xi, F)\otimes \Xi^{\vee}\to F$$ 
is surjective. The sheaf $\Xi$ is a {\em generating sheaf} of $\sS$ if it is a generator for every quasi-coherent sheaf on $\sS$. 
\end{defn}

\begin{prop}(\cite[\S 5.2]{OS03})
A locally sheaf $\Xi$ on a DM stack $\sS$ is a generating sheaf if and only if it is $p$-very ample. 
\end{prop}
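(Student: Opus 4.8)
The plan is to test both conditions fiberwise, at the residual gerbes of $\sS$, where the statement collapses to elementary representation theory. Since we work over $\cc$, the order of every stabilizer group is invertible, so the coarse-moduli pushforward $p_*$ is exact (as recorded in the remark after Definition \ref{defn_functors}) and the functor $F_\Xi = p_*\cHom_{\sO_\sS}(\Xi,-)$ is exact as well; this is what will let the construction of the adjunction morphism $\theta_\Xi(F)$ of Definition \ref{defn_generator} commute with restriction to a geometric fiber. Fix a geometric point $x$ of $\sS$ lying over $s\in S$, with stabilizer group $G_x$, and let $\sG_x\cong BG_x$ denote its residual gerbe; restricting a locally free sheaf to $\sG_x$ amounts to recording a representation of $G_x$. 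Write $V$ for the $G_x$-representation $\Xi|_{\sG_x}$. By Definition \ref{defn_very_ample}, $\Xi$ is $p$-very ample precisely when, for every such $x$, the representation $V$ contains every irreducible representation of $G_x$.

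First I would reduce the generating condition, which a priori quantifies over all quasi-coherent $F$, to this fiberwise statement. Because $\Xi$ is locally free, $\cHom_{\sO_\sS}(\Xi,-)=\Xi^\vee\otimes-$ commutes with filtered colimits, and so do $p_*$ (for the coarse map), $p^*$, and $-\otimes\Xi$; hence $\theta_\Xi(-)$ commutes with filtered colimits, and since a filtered colimit of surjections is surjective, it suffices to prove surjectivity of $\theta_\Xi(F)$ for $F$ coherent. For coherent $F$, surjectivity of $\theta_\Xi(F)$ is \'etale-local on $S$ and, by Nakayama's lemma applied in the local quotient presentations $[\spec A/G]$ guaranteed by the local structure of DM stacks, can be checked on the fibers, i.e. after restriction to each residual gerbe $\sG_x$. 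The exactness of $p_*$ in characteristic zero ensures that the formation of $F_\Xi(F)$ commutes with the base change $\sG_x\hookrightarrow\sS$, so that $\theta_\Xi(F)|_{\sG_x}$ is identified with the counit of the adjunction on $BG_x$.

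On $BG_x$ the morphism $\theta_\Xi(F)|_{\sG_x}$ is the evaluation map
\[
\Hom_{G_x}(V,W)\otimes V\longrightarrow W,\qquad \phi\otimes v\longmapsto \phi(v),
\]
where $W=F|_{\sG_x}$. Decomposing $V$ and $W$ into isotypic pieces and applying Schur's lemma, the image of this map is exactly the sum of those irreducible summands of $W$ whose isomorphism class already occurs in $V$. Thus evaluation is surjective for every $W$ if and only if every irreducible representation of $G_x$ occurs in $V$, which is the $p$-very ampleness of $\Xi$ at $x$; running this over all geometric points proves the implication $p$-very ample $\Rightarrow$ generating. For the converse I would argue by contraposition: if some irreducible $\rho$ of $G_x$ does not occur in $V$, then taking $F$ to be the pushforward to $\sS$ of the sheaf on $\sG_x$ corresponding to $\rho$, Schur's lemma gives $\Hom_{G_x}(V,\rho)=0$, so $F_\Xi(F)$ vanishes near $x$ and $\theta_\Xi(F)$ cannot be surjective; hence $\Xi$ fails to be a generating sheaf.

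The representation-theoretic core is immediate from Schur's lemma, so the real work lies in the reduction of the second paragraph. The main obstacle I anticipate is the base-change compatibility: one must check carefully that restricting $\theta_\Xi(F)$ to a residual gerbe yields the gerbe-level counit, which rests on the exactness of $p_*$ (valid here because $\cc$ has characteristic zero and the stabilizers are finite) together with the local quotient structure of $\sS$. Handling geometric points with non-trivial residue field and making the Nakayama reduction precise in the stacky setting are the technical points that will require the most care.
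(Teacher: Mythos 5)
You should note at the outset that the paper itself gives no proof of this proposition: it is quoted verbatim from Olsson--Starr \cite[\S 5.2]{OS03}, so the only meaningful comparison is with the standard argument there. Your proposal reconstructs that argument in essentially the standard form: reduction to coherent $F$ by filtered colimits, localization to a quotient chart $[\spec A/G]$, a Nakayama-type reduction to residual gerbes, Schur's lemma for the forward implication, and pushforward of a missing irreducible $\rho$ from $BG_x$ for the contrapositive (where in fact $F_{\Xi}(i_*\rho)$ vanishes identically, not just near $x$, since it is the skyscraper $\Hom_{G_x}(V,\rho)=0$). The overall structure is sound and is the right way to prove the statement.

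The one step you assert too strongly is the base-change claim: formation of $F_{\Xi}$ does \emph{not} commute with restriction to the residual gerbe, even in characteristic zero, and exactness of $p_*$ alone cannot give this (tensoring with $k(s)$ is only right exact). Concretely, take $\sS=[\aaa^1/\mu_2]$ with $\sigma(t)=-t$, $\Xi=\sO_{\sS}$, and $F=\sO_{\sS}\otimes\chi$ the structure sheaf twisted by the sign character; then $p_*\cHom(\Xi,F)$ is the module of odd polynomials, so $F_{\Xi}(F)\otimes k(0)\cong\cc$, while $\Hom_{\mu_2}(\cc_{\mathrm{triv}},\cc_{\chi})=0$, so the comparison map has a kernel and $\theta_{\Xi}(F)|_{BG_x}$ is not ``identified with'' the counit on $BG_x$. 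Fortunately your argument only ever uses the \emph{image} of the restricted map, so what you actually need is surjectivity of the comparison map
$$F_{\Xi}(F)\otimes_{\sO_S}k(s)\longrightarrow \Hom_{G_x}\bigl(\Xi|_{BG_x},F|_{BG_x}\bigr),$$
and this is true: writing $i\colon BG_x\hookrightarrow\sS$ for the reduced residual gerbe of a closed point (in a chart, $[Gx/G]\cong BG_x$), apply the exact functor $p_*$ to the surjection $\cHom(\Xi,F)\twoheadrightarrow i_*i^*\cHom(\Xi,F)$, observe that $p_*i_*i^*\cHom(\Xi,F)$ is the skyscraper at $s$ with value $\Hom_{G_x}(V,W)$ (invariants of $\bigoplus_{y\in Gx}\cHom(\Xi,F)|_y$ over $G$ reduce to $G_x$-invariants at $x$), and then use right exactness of $-\otimes_{\sO_S}k(s)$ together with the fact that the target is already a $k(s)$-module. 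With ``is identified with'' replaced by ``surjects onto,'' your proof closes; this is exactly the point where tameness (the Reynolds operator, i.e. exactness of $p_*$ and of taking $G$-invariants) genuinely enters, as you anticipated, but it enters through the surjection above rather than through a base-change isomorphism.
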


So later we will use the $p$-very ampleness to define generating sheaves.

\subsubsection{Gieseker stability and the moduli space by F. Nironi}\label{subsec_Gieseker_stability}

Let us fix again $p: \sS\to S$ a smooth DM stack and the map to its coarse moduli space $S$.  Let 
$\sO_S(1)$ be the very ample invertible sheaf on $S$, and $\Xi$ a generating sheaf on $\sS$. We call the pair $(\Xi, \sO_S(1))$ a polarization of $\sS$. 

\begin{defn}\label{defn_support_sheaves}
Let $F$ be a coherent sheaf on $\sS$, we define the support of $F$ to be the closed substack associated with the ideal
$$0\to \sI\to \sO_{\sS}\to \cEnd_{\sO_{\sS}}(F).$$
So $\dim(\Supp F)$ is the dimension of the substack associated with the ideal $\sI\subset \sO_{\sS}$ since $\sS$ is a DM stack. 
\end{defn}

\begin{defn}\label{defn_pure_sheaf}
A pure sheaf of dimension $d$ is a coherent sheaf $F$ such that for every non-zero subsheaf $F^\prime\subset F$ the support of $F^\prime$ is of pure dimension $d$. 
\end{defn}
So for any coherent sheaf $F$, we have the torsion filtration:
$$0\subset T_0(F)\subset \cdots \subset T_d(F)=F$$
where every $T_i(F)/T_{i-1}(F)$ is pure of dimension $i$ or zero, see \cite[\S 1.1.4]{HL}.

Let us define the Gieseker stability condition:
\begin{defn}\label{defn_Gieseker}
The modified Hilbert polynomial of a coherent sheaf $F$ on $\sS$ is defined as:
$$H_{\Xi}(F,m)=\chi(\sS, F\otimes\Xi^{\vee}\otimes p^*\sO_{\sS}(m))
=H(F_{\Xi}(F)(m))=\chi(S, F_{\Xi}(F)(m)).$$
\end{defn}
\begin{rmk}
\begin{enumerate}
\item  Let $F$ be of dimension $d$, then we can write:
$$H_{\Xi}(F,m)=\sum_{i=0}^{d}\alpha_{\Xi, i}(F)\frac{m^i}{i!}$$
which is induced by the case of schemes.
\item Also the modified Hilbert polynomial is additive on short exact sequences since the functor $F_{\Xi}$ is exact. 
\item If we don't choose the generating sheaf $\Xi$, the Hilbert polynomial $H$ on $\sS$ will be the same as the Hilbert polynomial on the coarse moduli space $S$.  In order to get interesting information on the DM stack $\sS$, the sheaf $\Xi$ is necessary.  For example, in \cite[\S 7]{Nironi}, \cite{Jiang2} the modified Hilbert polynomial on a root stack $\sS$ will corresponds to the parabolic Hilbert polynomial on the pair $(S,D)$ with $D\subset S$ a smooth divisor. 
\end{enumerate}
\end{rmk}

\begin{defn}\label{defn_reduced_hilbert}
The {\em reduced modified Hilbert polynomial} for the pure sheaf $F$ is defined as 
$$h_{\Xi}(F)=\frac{H_{\Xi}(F)}{\alpha_{\Xi, d}(F)}.$$
\end{defn}

\begin{defn}\label{defn_stability}
Let $F$ be a pure coherent sheaf.  We call $F$ semistable if for every proper subsheaf $F^\prime\subset F$, 
$$h_{\Xi}(F^\prime)\leq h_{\Xi}(F).$$
We call $F$ stable if $\leq$ is replaced by $<$ in the above inequality.
\end{defn}

\begin{defn}\label{defn_slope_stability}
 We define the slope of  $F$ by 
 $$\mu_{\Xi}(F)=\frac{\alpha_{\Xi, d-1}(F)}{\alpha_{\Xi,d}(F)}.$$
 Then $F$ is
semistable if for every proper subsheaf $F^\prime\subset F$, 
$$\mu_{\Xi}(F^\prime)\leq \mu_{\Xi}(F).$$
We call $F$ stable if $\leq$ is replaced by $<$ in the above inequality.
\end{defn}

\begin{rmk}\label{rem_stability}
\begin{enumerate}
\item The notion of $\mu$-stability and semistability is related to the Gieseker stability and semistability in the same way as schemes, i.e.,
$$\mu-\text{stable}\Rightarrow \text{Gieseker stable}\Rightarrow \text{Gieseker semistable}\Rightarrow \mu-\text{semistable}$$
\item
The stability really depends on the generating sheaf $\Xi$.  This stability is not necessarily the same as the ordinary Gieseker stability even when $\sS$ is a scheme.  
\item 
One can define the rank 
$$\rk F_{\Xi}(F)=\frac{\alpha_{\Xi, d}(F)}{\alpha_{d}(\sO_{S})}.$$
\end{enumerate}
\end{rmk}

Let us fix a polarization $(\Xi, \sO_{S}(1))$ on $\sS$, and a modified Hilbert polynomial $H$. We define the moduli functor 
$$\sM:=\sM^H_{\Xi}(\sS): (\Sch_{\kappa})\to (\mbox{Groupoids})$$
by
$$T\mapsto \left\{F \left|
 \begin{array}{l}
  \text{$F\to T$ is a flat family of semistable sheaves} \\
  \text{on $S_{T}/T$ with modified Hilbert polynomial $H$.}
   \end{array}
 \right\}\right/\cong.
 $$
In \cite[\S 4, \S 5]{Nironi}, Nironi studies in detail the boundedness of the flat  family and prove that the moduli functor $\sM$ is a global GIT quotient stack.  
We briefly review it here. 
Fix an integer $m$ such that every semistable sheaf on $\sS$ is $m$-regular.  Let $N$ be the positive integer $N:=H_{\Xi}(F,m)$, and denote with $V$ the linear space $\cc^{\oplus N}$. 
There is an open subscheme $\mathcal{Q}$ in the quot scheme 
$\mbox{Quot}_{\sS}(V\otimes\Xi\otimes p^*\sO_{S}(-m), H)$ such that 
$\sM=[\mathcal{Q}/GL_N]$.  The coarse moduli space $\rM$ of $\sM$ is 
 is a projective scheme.  Moreover, the stable locus $\rM^{s}\subset \rM$ is an open quasi-projective scheme.

\subsection{The moduli space of Higgs pairs}\label{subsec_moduli_Higgs}

Now let 
$$\sX:=\mbox{Tot}(K_{\sS})$$
be the total space of the canonical line bundle $K_{\sS}$ on $\sS$.  Since $\sS$ is a smooth two dimensional DM stack, $K_{\sS}$ exists as a line bundle.  The total space $\sX$ is a Calabi-Yau threefold DM stack. For example, if $\sS=\mathbb{P}(1,2,2)$ is a weighted projective plane, then $\sX=\mbox{Tot}(\sO_{\mathbb{P}(1,2,2)}(-5))$. If $\sS$ is a quintic surface in $\mathbb{P}^3$ with isolated ADE singularities $P_1, \cdots, P_n$, then locally around the point $P_i$, there exists an open neighborhood 
$$U_i\cong [V_i/G_i]$$
where $V_i\cong \aaa_{\kappa}^2$ and $G_i\subset SU(2)$ is a finite ADE type subgroup.  Then $K_{\sS}$ is an orbifold line bundle, which around $[V_i/G_i]$ is a $G_i$-equivariant line bundle on $V_i$ for each $i$. 

\subsubsection{Spectral construction of Tanaka-Thomas}

Let us fix a line bundle $\sL$ on $\sS$. 
A $\sL$-Higgs pair on $\sS$ is given by $(E,\phi)$, where $E\in \Coh(\sS)$ is a coherent sheaf and 
$$\phi\in \Hom(E, E\otimes \sL)$$
is a section. 

\begin{prop}\label{prop_equivalent_categories}
There exists an abelian category $\Higg_{\sL}(\sS)$ of Higgs pairs on $\sS$ and an equivalence:
\begin{equation}\label{eqn_equivalence_categories}
\Higg_{\sL}(\sS)\stackrel{\sim}{\longrightarrow} \Coh_{c}(\sS)
\end{equation}
where $\Coh_{c}(\sS)$ is the category of compactly supported coherent sheaves on $\sX$. 
\end{prop}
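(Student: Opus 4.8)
The plan is to realize Proposition \ref{prop_equivalent_categories} as the Beauville--Narasimhan--Ramanan spectral construction relative to $\pi\colon \sX=\mbox{Tot}(\sL)\to\sS$, carried out stack-theoretically. First I would identify $\sX$ with the relative spectrum $\spec_{\sS}\!\big(\Sym^{\bullet}\sL^{\vee}\big)$: the total space of a line bundle on a DM stack is the relative $\spec$ of the symmetric algebra on its dual, a statement that is \'etale-local on $\sS$ and hence reduces to the well-known case of schemes over an \'etale atlas. In particular $\pi$ is an affine morphism and $\mathcal{A}:=\pi_{*}\sO_{\sX}=\bigoplus_{n\geq 0}\sL^{-n}=\Sym^{\bullet}\sL^{\vee}$ as a sheaf of $\sO_{\sS}$-algebras. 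Here the objects of $\Higg_{\sL}(\sS)$ are taken to be all pairs $(E,\phi)$ with $E\in\Coh(\sS)$ and $\phi\in\Hom(E,E\otimes\sL)$, dropping torsion-freeness so that the category is abelian.

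Because $\pi$ is affine, the pushforward $\pi_{*}$ is exact and induces an equivalence between quasi-coherent sheaves on $\sX$ and quasi-coherent $\mathcal{A}$-modules on $\sS$; this is again \'etale-local and descends from the scheme case. The second step is to unwind an $\mathcal{A}$-module structure on a coherent sheaf $E$ on $\sS$. Since $\mathcal{A}$ is generated in degree one by the line bundle $\sL^{\vee}$, such a structure is the same datum as a single $\sO_{\sS}$-linear action $\sL^{\vee}\otimes E\to E$, and by the adjunction $(-\otimes\sL^{\vee})\dashv(-\otimes\sL)$ this is precisely a morphism $\phi\colon E\to E\otimes\sL$, i.e. a Higgs field; as $\sL^{\vee}$ has rank one the symmetric algebra is free on one generator, so no further relations are imposed and $\phi$ extends uniquely to the full $\mathcal{A}$-action. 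This sets up mutually inverse assignments $(E,\phi)\mapsto\sE_{\phi}$ and $\sF\mapsto(\pi_{*}\sF,\ \text{action of }\sL^{\vee})$, and a morphism of $\mathcal{A}$-modules is exactly a map of sheaves commuting with the Higgs fields, so kernels and cokernels formed in $\Coh(\sS)$ inherit canonical Higgs fields and $\Higg_{\sL}(\sS)$ is abelian with $\pi_{*}$ exact.

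Third, I would match the finiteness and support conditions. If $E$ is coherent on $\sS$ then $\sE_{\phi}$ is generated over $\sO_{\sS}$ by $E$, hence finite over $\sS$, so its support $Z\subset\sX$ is finite over $\sS$; since $\sS$ is a projective DM stack, $Z$ is proper over $\cc$ and $\sE_{\phi}$ is compactly supported. Conversely, for $\sF\in\Coh_{c}(\sX)$ the support $Z$ is proper, and $\pi|_{Z}$ is both affine and proper, hence finite (fiberwise it is a proper closed subscheme of the affine line $\aaa^1$, therefore finite), so $\pi_{*}\sF$ is coherent on $\sS$. This shows the equivalence restricts to $\Coh_{c}(\sX)$ on one side and coherent Higgs pairs on the other, establishing (\ref{eqn_equivalence_categories}).

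The main obstacle I anticipate is purely the stacky bookkeeping: verifying that the relative $\spec$ of a quasi-coherent algebra, the affineness of $\pi$, and the resulting equivalence $\Coh(\sX)\simeq\mathcal{A}\text{-}\mathrm{mod}$ all hold for Deligne--Mumford stacks and are compatible with the coarse-space polarization. Each of these is \'etale-local on $\sS$, so the strategy is to fix an \'etale atlas by schemes (locally $[M/G]$), apply the classical scheme-level construction, and check $G$-equivariance and descent of all the data; the fiberwise-finiteness argument for compact support likewise reduces to the scheme case on the atlas. No new geometric input beyond smoothness and properness of $\sS$ is needed.
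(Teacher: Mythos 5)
Your proposal is correct and follows essentially the same route as the paper: both use the spectral construction via the affine morphism $\pi$, identifying $\Coh(\sX)$ with modules over $\pi_*\sO_{\sX}=\bigoplus_{i\geq 0}\sL^{-i}$, matching a module structure (generated in degree one) with a Higgs field, and characterizing coherence of $\pi_*\sE$ by properness of $\pi|_{\Supp\sE}$. Your explicit remarks that torsion-freeness must be dropped for the category to be abelian and that the étale-local reduction to $[V/G]$ handles the stacky issues are exactly the points the paper treats (the latter via its local quotient presentation), just stated more carefully.
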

\begin{proof}
We generalize the case of projective surface $S$ and $X=\mbox{Tot}(K_S)$ as in \cite[Proposition 2.2 ]{TT1}. 
The morphisms between Higgs pairs $(E,\phi)$ and $(E^\prime, \phi^\prime)$ are given in an obvious way by the the diagram
\[
\xymatrix{
E\ar[r]^{\phi}\ar[d]_{f}& E\otimes \sL\ar[d]^{f\otimes \id}\\
E^\prime \ar[r]^{\phi^\prime}& E^\prime\otimes \sL
}
\]
The kernels and cokernels of $f$ and $f\otimes \id$ define the kernel and cokernel Higgs pairs.  Hence the $\sL$-Higgs pairs form an abelian category. 

Let 's prove the equivalence.   The morphism $\pi: \sX\to \sS$ is also affine in the category of DM stacks.  It is still true that $\pi_*$, taken as a functor,  is an equivalence between the category of coherent $\sO_{\sX}$-modules and the abelian category of $\pi_*(\sO_{\sX})$-modules on $\sS$, for example see \cite{Stack_Project}.  One can see this locally that $\sS$ behaves like a quotient $[V/G]$ for $V\cong \cc^2$  and $G$ a finite group scheme acting on $V$, the the morphism
$\pi: [V\times \cc/G]\to [V/G]$ can be taken as a $G$-equivariant morphism $V\times \cc\to V$.  

Then in this case 
\begin{equation}\label{eqn_pi_OX}
\pi_*\sO_{\sX} = \bigoplus_{i\geq 0}\sL^{-i}\eta^i, i\geq 0
\end{equation}
where we take $\eta$ as the tautological section of $\pi^*\sL$ which is linear on the fibers and cuts out the zero section $\sS\subset \sX$. The sheaves $\sE$ on $\sX$ are equivalent to sheaves of modules $\pi_*\sE$ over $\pi_*\sO_{\sX}$.

Still (\ref{eqn_pi_OX}) is generated by $\sO_{\sS}$ and $\sL^{-1}  \eta$, so a module over $\pi_*\sO_{\sX}$ is equivalent to an $\sO_{\sS}$ -module $E$ together with a commuting action of $\sL^{-1}\cdot \eta$, i.e. an $\sO_{\sS}$-linear map
$$E\otimes \sL^{-1} \stackrel{\pi_*\eta}{\longrightarrow} E.$$ 
Thus we get an $\sL$-Higgs pair
\begin{equation}\label{eqn_Higgs_pair}
(E,\phi) = (\pi_*\sE,\pi_*\eta).
\end{equation}

On the other hand,  given a Higgs pair $(E, \phi)$ we get an action of $\sL^{-i}\cdot \eta$ by
$$E\otimes \sL^{-i}\stackrel{\phi^i}{\longrightarrow}E.$$ 
We sum over 
all $i$ and get   an action of $\pi_*\sO_{\sX}$  on $E$.  We denote by $\sE_{\phi}$ for this sheaf. 
This defines a functor from $\Higg_{\sL}(\sS)$ to $\Coh_{c}(\sX)$, which is an equivalence. 
Finally if $\sE$ is coherent, then $\pi_*\sE$ is coherent if and only if $\pi|_{\Supp \sE}$ is proper and if and only if 
$\sE$ is compactly supported. 
\end{proof}

\subsubsection{Moduli of Higgs pairs}

We can define the Gieseker stability on the Higgs pairs $(E,\phi)$.  Let us fix a generating sheaf $\Xi$ on $\sS$. Then for any coherent sheaf $E\in\Coh(\sS)$ we have the modified Hilbert polynomial $h_{\Xi}(E)$.

\begin{defn}\label{defn_Gieseker_Higgs}
The $\sL$-Higgs pair $(E,\phi)$ is said to be Gieseker stable with respect to the polarization $(\Xi, \sO_{S}(1))$ if and only if 
$$h_{\Xi}(F)<h_{\Xi}(E)$$
for every proper $\phi$-invariant subsheaf $F\subset E$. 
\end{defn}

We define the moduli functor of Higgs pairs as:
$$\N:=\N^H_{\Xi}(\sS): (\Sch_{\kappa})\to (\mbox{Groupoids})$$
by
$$T\mapsto \left\{(E,\phi) \left|
 \begin{array}{l}
  \text{$(E,\phi)\to T$ is a flat family of stable Higgs sheaves} \\
  \text{on $S_{T}/T$ with modified Hilbert polynomial $H$.}
   \end{array}
 \right\}\right/\cong.
 $$
Then $\N$ is also represented by a GIT quotient stack with coarse moduli space a  quasi-projective scheme which we still denote by $\N$. We see this from the following:

First consider the diagram:
\begin{equation}\label{eqn_diagram}
\xymatrix{
\sX\ar[r]^{p}\ar[d]_{\pi}& X\ar[d]^{\pi}\\
\sS\ar[r]^{p}& S
}
\end{equation}
where $p$ are the morphisms from the DM stacks to their coarse moduli spaces, and $\pi$ are the morphisms from the line bundles to the base. 
\begin{prop}\label{prop_Gieseker_matches}
For the projection $\pi: \sX\to \sS$, the pullback $\pi^*\Xi$ is a generating sheaf on $\sX$. 
Moreover, under the equivalence (\ref{eqn_equivalence_categories}), the Gieseker (semi)stability of the Higgs sheaves 
$(E,\phi)$ with respect to $(\Xi, \sO_{S}(1))$ is equivalent to the Gieseker stability of the torsion sheaves 
$\sE_{\phi}$ with respect to the pair $(\pi^*\Xi, \pi^*\sO_{S}(1))$. 
\end{prop}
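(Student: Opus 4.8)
The statement splits into two logically independent assertions, and the plan is to treat them in turn. For the first, that $\pi^*\Xi$ is a generating sheaf on $\sX$, I would reduce to verifying the $p$-very ampleness condition of Definition \ref{defn_very_ample} at every geometric point of $\sX$, using the local picture from the proof of Proposition \ref{prop_equivalent_categories}: in a chart where $\sS\cong[V/G]$ with $V\cong\cc^2$ and $G$ finite, the total space is $\sX\cong[(V\times\cc)/G]$ and $\pi^*\Xi$ is the pullback of the $G$-equivariant bundle $\Xi$. A geometric point of $\sX$ over $v\in V$ with fibre coordinate $t$ has stabilizer $\Gamma=\{g\in\mathrm{Stab}_G(v):g\cdot t=t\}$, a subgroup of $\mathrm{Stab}_G(v)$, which is itself the stabilizer of the image point on $\sS$; and the fibre representation of $\pi^*\Xi$ at this point is the restriction to $\Gamma$ of the $\mathrm{Stab}_G(v)$-representation on the fibre of $\Xi$.

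Since $\Xi$ is generating on $\sS$, that $\mathrm{Stab}_G(v)$-representation contains every irreducible $\mathrm{Stab}_G(v)$-representation, and I would invoke Frobenius reciprocity to conclude that its restriction to the subgroup $\Gamma$ contains every irreducible $\Gamma$-representation: each irreducible $\sigma$ of $\Gamma$ occurs in the restriction of some irreducible constituent of $\mathrm{Ind}_\Gamma^{\mathrm{Stab}_G(v)}\sigma$, which $\Xi$ already contains. Hence $\pi^*\Xi$ is $p$-very ample and therefore generating. On the zero section $\sS\subset\sX$ this is immediate because there $\Gamma=\mathrm{Stab}_G(v)$; the genuinely new point is the representation-theoretic step for points off the zero section.

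For the stability claim the crux is the identity of modified Hilbert polynomials $H_{\pi^*\Xi}(\sE_\phi,m)=H_\Xi(E,m)$, which I would establish by a projection-formula computation. Because $\pi:\sX\to\sS$ is affine (the total space of a line bundle), $\pi_*$ is exact and preserves Euler characteristics, so
\[
H_{\pi^*\Xi}(\sE_\phi, m)\;=\;\chi\big(\sX,\, \sE_\phi \otimes (\pi^*\Xi)^\vee \otimes p^*\pi^*\sO_S(m)\big)\;=\;\chi\big(\sS,\, E \otimes \Xi^\vee \otimes p^*\sO_S(m)\big)\;=\;H_\Xi(E, m),
\]
where the first step uses the commutativity of diagram (\ref{eqn_diagram}), so that the polarization pulled up to $\sX$ is the restriction along the common composite $\sX\to S$, and then the projection formula together with $\pi_*\sE_\phi=E$ from the spectral correspondence of Proposition \ref{prop_equivalent_categories}. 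Since $\sE_\phi$ is supported on the zero section, its dimension as a sheaf on $\sX$ equals $\dim E$; passing to leading coefficients the reduced Hilbert polynomials therefore agree, $h_{\pi^*\Xi}(\sE_\phi)=h_\Xi(E)$.

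Finally I would transport the comparison through the equivalence of abelian categories (\ref{eqn_equivalence_categories}). An equivalence of abelian categories identifies subobjects, and a subobject of $(E,\phi)$ is precisely a $\phi$-invariant subsheaf $F\subset E$ equipped with $\phi|_F$; these correspond bijectively to subsheaves $\sG\subset\sE_\phi$ in $\Coh_c(\sX)$, with $F=\pi_*\sG$, and every subsheaf of the compactly supported sheaf $\sE_\phi$ is itself compactly supported, so no subobjects are lost. Applying the Hilbert polynomial identity to each such $F$ and its image $\sG$ gives $h_\Xi(F)=h_{\pi^*\Xi}(\sG)$, so the defining inequality $h_\Xi(F)<h_\Xi(E)$ of Definition \ref{defn_Gieseker_Higgs} holds for all proper $\phi$-invariant $F$ if and only if $h_{\pi^*\Xi}(\sG)<h_{\pi^*\Xi}(\sE_\phi)$ holds for all proper subsheaves, i.e. if and only if $\sE_\phi$ is Gieseker stable in the sense of Definition \ref{defn_stability}; the semistable case is identical with $\leq$. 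The main obstacle I anticipate is the bookkeeping in the first assertion, namely confirming that the stabilizer of an off-zero-section point is the claimed subgroup $\Gamma$ and that $\pi^*\Xi$'s fibre representation is the corresponding restriction, so that the Frobenius reciprocity argument applies; the stability half is then essentially formal once the Hilbert polynomial identity and the subobject correspondence are in place.
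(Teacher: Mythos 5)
Your proposal is correct. On the stability half it is essentially the paper's own argument: the equivalence (\ref{eqn_equivalence_categories}) identifies proper $\phi$-invariant subsheaves $F\subset E$ with proper subsheaves $\sF\subset\sE_\phi$ (the paper leaves implicit your useful observation that any subsheaf of a compactly supported sheaf is again compactly supported, so no subobjects are lost), and the identity of modified Hilbert polynomials follows from exactness of $\pi_*$ for the affine map $\pi$ together with the projection formula, $\chi(\sX,\sE_\phi\otimes\pi^*\Xi^\vee\otimes\pi^*p^*\sO_S(m))=\chi(\sS,E\otimes\Xi^\vee\otimes p^*\sO_S(m))$; the paper records exactly this chain of Euler characteristics and concludes as you do. Where you genuinely go beyond the paper is the first assertion: the paper states that $\pi^*\Xi$ is a generating sheaf on $\sX$, but its proof never addresses this, whereas you give a pointwise verification of $p$-very ampleness. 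Your argument there is sound and worth having: in a chart $\sS\cong[V/G]$ one has $\sX\cong[(V\times\cc)/G]$, the stabilizer of a point $(v,t)$ is the subgroup $\Gamma\subset\mathrm{Stab}_G(v)$ acting trivially on the fibre coordinate (a proper subgroup in general when $t\neq 0$ and the fibre representation of $K_\sS$ at $v$ is nontrivial), the fibre of $\pi^*\Xi$ at $(v,t)$ is the restriction to $\Gamma$ of the fibre of $\Xi$ at $v$, and Frobenius reciprocity does yield that a $\mathrm{Stab}_G(v)$-representation containing every irreducible restricts to a $\Gamma$-representation containing every irreducible: any irreducible $\sigma$ of $\Gamma$ occurs in $\mathrm{Res}^{\mathrm{Stab}_G(v)}_{\Gamma}\tau$ for any irreducible constituent $\tau$ of $\mathrm{Ind}^{\mathrm{Stab}_G(v)}_{\Gamma}\sigma$, and $\tau$ occurs in the fibre of $\Xi$ by hypothesis. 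So your write-up reproduces the paper's proof of the stability statement and, in addition, fills the gap the paper leaves on the generating-sheaf statement.
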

\begin{proof}
First we show that $\pi^*\Xi$ is a generating sheaf on $\sX$.  For any local chart $[\cc^2/G]$ of $\sS$, where $G$ is a finite group acting on $\cc^2$, the generating sheaf $\Xi$ contains all the irreducible representations of $G$.  All the local charts of $\sX$ are of the form $[\cc^2\times\cc/G]$, where  the  finite group $G$-action on the extra factor  $\cc$ in $\cc^2\times\cc$ is induced by the action on $\cc^2$ (taking the extra $\cc$ as $\Lambda^2\cc^2$), thus $\pi^*\Xi$ also contains all the irreducible representations of $G$. Therefore $\pi^*\Xi$ is a generating sheaf. 

From the equivalence (\ref{eqn_equivalence_categories}), the $\phi$-invariant subsheaves $F\subset E$ are equivalent to subsheaves $\sF\subset \sE_{\phi}$ on $\sX$.  We also have 
$$\chi(\sS, E\otimes \Xi^{\vee})=\chi(\sS, \pi_*\sE_{\phi}\otimes \Xi^{\vee})
=\chi(\sX, \sE_{\phi}\otimes \pi^*\Xi^{\vee}).$$
So the Gieseker stability in Definition \ref{defn_Gieseker_Higgs} is equivalent to 
$$h_{\pi^*\Xi}(\sF)< h_{\pi^*\Xi}(\sE_{\phi})$$
for all proper subsheaves $\sF\subset \sE_{\phi}$.   This is the Gieseker stability of the torsion sheaves on $\sX$ with respect to $\pi^*\Xi$ and $\pi^*\sO_{S}(1)$. 
\end{proof}

\begin{rmk}
Hence from Proposition \ref{prop_Gieseker_matches}, the moduli space of stable Higgs pairs $\N$ is isomorphic to the moduli space of stable coherent sheaves on $\sX$ with fixed modified Hilbert polynomial $H$, which is a GIT quotient stack with coarse moduli space a  quasi-projective scheme by Nironi's result in \cite[\S 6]{Nironi}. In the Appendix we prove after modulo out the  global $\cc^*$-action the moduli stack $\N$ admits a perfect obstruction theory.
\end{rmk}

\section{Deformation theory  and the Vafa-Witten invariants}\label{sec_VW}

Fix $\pi: \sX:=\mbox{Tot}(\sL)\to \sS$, the projection from the total space of the line bundle $\sL$ to $\sS$. Then from the spectral theory a coherent sheaf $\sE$ on $\sX$ is equivalent to a 
$\pi_{*}\sO_{\sX}=\bigoplus_{i\geq 0}\mathcal{L}^{-i}\eta^i$-module, where $\eta$ is the tautological section of $\pi^*\mathcal{L}$. 

From \cite[\S 2.2]{TT1}, given a Higgs pair $(E, \phi)$, we have the torsion sheaf $\sE_{\phi}$ of $\sX$ supported on $\sS$.  $\sE_\phi$ is generated by its sections down on $\pi$ and we have a natural surjective morphism
\begin{equation}\label{eqn_rE_quotient}
0\to \pi^*(E\otimes\sL^{-1})\stackrel{\pi^*\phi-\eta}{\longrightarrow}\pi^*E=\pi^*\pi_*\sE_{\phi}\stackrel{\ev}{\longrightarrow}\sE_\phi\to 0
\end{equation}
with kernel $\pi^*(E\otimes\sL^{-1})$ as in Proposition 2.11 of \cite{TT1}.  All the arguments in 
\cite[Proposition 2.11]{TT1} work for smooth DM stack $\sS$ and $\sX$. The reason is that since $\pi_*\eta=\phi$, $\pi^*E$ is divided by the submodule $\pi^*(E\otimes\sL^{-1})$ to make sure $\eta$ acts as $\pi^*\phi$ on the quotient, hence we have
$\sE_\phi$. 

\subsection{Deformation theory}

The deformation of $\sE$ on $\sX$ is governed by $\Ext^*_{\sX}(\sE, \sE)$, while the Higgs pair $(E,\phi)$ is governed by the cohomology groups of the total complex
$$R\cHom_{\sS}(E, E)\stackrel{[\cdot, \phi]}{\longrightarrow}R\cHom_{\sS}(E,E\otimes\sL).$$
By  homological algebra proof as in \cite[Proposition 2.14]{TT1}, we have the exact triangle:
\begin{equation}\label{eqn_deformation1}
R\cHom(\sE_\phi,\sE_\phi)\to R\cHom_{\sS}(E, E)\stackrel{\circ\phi-\phi\circ}{\longrightarrow}R\cHom_{\sS}(E\otimes\sL^{-1},E).
\end{equation}
Taking cohomology of $(\ref{eqn_deformation1})$ we get 
\begin{equation}\label{eqn_coh_deformation1}
\cdots\to\Hom(E,E\otimes K_{\sS})\to \Ext^1(\sE_\phi, \sE_{\phi})\to \Ext^1(E,E)\to \cdots
\end{equation}
which relates the automorphisms, deformations and obstructions of $\sE_\phi$ to those of $(E,\phi)$. 

\subsection{Families and the moduli space}

Let $\sS\to B$ be a family of surface DM stacks $\sS$, i.e., a smooth projective morphism with the fiber a surface DM stack, and let $\sX\to B$ be the total space of the a line bundle $\sL=K_{\sS/B}$. 

Let $\N^{H}$ denote the moduli space of Gieseker stable Higgs pairs on the fibre of $\sS\to B$ with fixed rank $r> 0$ and Hilbert polynomial $H$ (a fixed generating sheaf $\Xi$).    

We pick a (twisted by the $\cc^*$-action) universal sheaf $\rE$ over $\N\times_{B}\sX$.  We use the same $\pi$ to represent the projections
$$\pi: \sX\to \sS; \quad  \pi: \N\times_{B}\sX\to \N\times_{B}\sS.$$
Since $\rE$  is flat over $\N$ and $\pi$ is affine, 
$$\E:=\pi_*\rE \text{~on~} \N\times_{B}\sS$$
is flat over $\N$.  $\E$ is also coherent because  $\rE$ is coherent and compactly supported. Therefore it defines a classifying map:
$$\Pi: \N\to \rM$$
by
$$\sE\mapsto \pi_*\sE; \quad  (E,\phi)\mapsto E,$$
where $\rM$ is the moduli space of coherent sheaves on the fiber of $\sS\to B$ with Hilbert polynomial $H$. 
For simplicity, we use the same $\E$ over $\rM\times \sS$ and $\E=\Pi^*\E$ on $\N\times \sS$. 
Let 
$$p_{\sX}:  \N\times_{B}\sX\to \N; \quad   p_{\sS}:  \N\times_{B}\sS\to \N$$
be the projections.  Then (\ref{eqn_deformation1}) becomes:
\begin{equation}\label{eqn_deformation2}
R\cHom_{p_{\sX}}(\rE, \rE)\stackrel{\pi_*}{\longrightarrow}R\cHom_{p_{\sS}}(\E, \E)\stackrel{[\cdot, \phi]}{\longrightarrow}R\cHom_{p_{\sS}}(\E, \E\otimes\sL).
\end{equation}
Let $\sL=K_{\sS/B}$ and
taking the relative Serre dual of the above exact triangle we get 
$$R\cHom_{p_{\sS}}(\E, \E)[2]\to R\cHom_{p_{\sS}}(\E, \E\otimes K_{\sS/B})[2]\to R\cHom_{p_{\sX}}(\rE, \rE)[3].$$
\begin{prop}(\cite[Proposition 2.21]{TT1})\label{prop_self_dual}
The above exact triangle  is the same as (\ref{eqn_deformation2}), just shifted.
\end{prop}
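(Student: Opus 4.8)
The plan is to establish the self-duality first at the level of objects and then at the level of the connecting maps. First I would invoke relative Grothendieck--Serre duality for the two smooth proper structure morphisms. For $p_{\sS}\colon \N\times_B\sS\to\N$, which has relative dimension two and relative dualizing sheaf $K_{\sS/B}$, duality supplies a functorial isomorphism
\begin{equation*}
\bigl(R\cHom_{p_{\sS}}(\E,\E)\bigr)^{\vee}\;\cong\; R\cHom_{p_{\sS}}(\E,\E\otimes K_{\sS/B})[2],
\end{equation*}
and, twisting $\E$ by $K_{\sS/B}$, likewise $\bigl(R\cHom_{p_{\sS}}(\E,\E\otimes K_{\sS/B})\bigr)^{\vee}\cong R\cHom_{p_{\sS}}(\E,\E)[2]$. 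For $p_{\sX}\colon \N\times_B\sX\to\N$ the fibre is the Calabi--Yau threefold $\sX=\mbox{Tot}(K_{\sS})$, so $\omega_{\sX/B}\cong\sO_{\sX}$; since the sheaves $\rE$ are compactly supported on the zero section, I would apply duality on the projectivization $\overline{\sX}=\proj(K_{\sS}\oplus\sO_{\sS})$, whose boundary is disjoint from $\Supp\rE$, to obtain $\bigl(R\cHom_{p_{\sX}}(\rE,\rE)\bigr)^{\vee}\cong R\cHom_{p_{\sX}}(\rE,\rE)[3]$.

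Applying $(-)^{\vee}$ to the triangle (\ref{eqn_deformation2}) reverses its arrows, and substituting the three identifications above turns the dualized triangle, after a global shift by $[-2]$, back into a triangle on the same three objects $R\cHom_{p_{\sX}}(\rE,\rE)$, $R\cHom_{p_{\sS}}(\E,\E)$, $R\cHom_{p_{\sS}}(\E,\E\otimes K_{\sS/B})$ appearing in (\ref{eqn_deformation2}). This already matches the assertion at the level of objects and reduces the proposition to checking that each structure map is carried to the corresponding map of (\ref{eqn_deformation2}).

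The heart of the matter is the map-level comparison. The key computation is that the bracket map $[\cdot,\phi]\colon R\cHom_{p_{\sS}}(\E,\E)\to R\cHom_{p_{\sS}}(\E,\E\otimes K_{\sS/B})$ is self-adjoint (up to sign) for the Serre trace pairing: cyclic invariance of the trace yields $\tr\bigl(a\circ[b,\phi]\bigr)=-\tr\bigl([a,\phi]\circ b\bigr)$ on local sections, so that $([\cdot,\phi])^{\vee}[-2]$ agrees with $[\cdot,\phi]$. Dually, Grothendieck duality for the affine projection $\pi\colon\sX\to\sS$ identifies the Serre dual of the pushforward $\pi_*\colon R\cHom_{p_{\sX}}(\rE,\rE)\to R\cHom_{p_{\sS}}(\E,\E)$ with the connecting morphism $R\cHom_{p_{\sS}}(\E,\E\otimes K_{\sS/B})\to R\cHom_{p_{\sX}}(\rE,\rE)[1]$ of the triangle; concretely this is the compatibility of $\pi_*$ and $\pi^{!}$ with the trace, read off from the two-term resolution (\ref{eqn_rE_quotient}) that produces (\ref{eqn_deformation1}) in the first place. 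The remaining third compatibility then follows formally by rotation of the triangle.

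I would carry out these comparisons exactly along the homological-algebra lines of \cite[Proposition 2.21]{TT1}. The only point needing care in the stacky setting is that all the required tools---relative Serre duality, the trace map, and functoriality of $R\cHom$---are available for the smooth proper Deligne--Mumford stacks $\sS$ and $\overline{\sX}$; this holds because their relative dualizing complexes are line bundles in the expected degree and the trace pairing on the orbifold derived category is perfect. The main obstacle I anticipate is precisely this map-level bookkeeping---tracking the signs and the shift $[-2]$ through the rotation and verifying the self-adjointness of $[\cdot,\phi]$ and the duality between $\pi_*$ and the connecting map---rather than the object-level identification, which is formal from duality. The payoff is that the triangle (\ref{eqn_deformation2}) is isomorphic to its own Serre dual up to shift, which is what underlies the symmetric obstruction theory on $\N^{\perp}_{L}$.
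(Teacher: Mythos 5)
Your proposal is correct and follows essentially the same route as the paper's proof: dualize the triangle by relative Serre duality, observe that the bracket $[\cdot,\phi]$ is self-adjoint under the trace pairing, and reduce the whole statement to identifying the Serre dual of $\pi_*$ with the connecting morphism via the adjunction $\pi_*\dashv\pi^{!}$ with $\pi^{!}=\pi^*\otimes K_{\sS/B}^{-1}[1]$ and the compatibility of the two trace pairings (the paper phrases this as the equality of $\tr((-)\circ(-)\circ e)$ and $\tr(\pi_*(-)\circ(-))$, where $e$ is the extension class of (\ref{eqn_rE_quotient})). The only cosmetic difference is that you spell out the cyclic-trace self-adjointness and the compactly-supported duality on $\overline{\sX}$, which the paper leaves implicit.
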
 
\begin{proof}
We check that the right hand arrow is the same as in (\ref{eqn_deformation2}). 
Look at the diagram: 
\[
\xymatrix{
R\cHom_{p_{\sX}}(\rE, \rE)\ar[d]^{\pi_*}&\otimes&R\cHom_{p_{\sX}}(\rE, \rE)[3]\ar[r]& \sO_{\N}\\
R\cHom_{p_{\sS}}(\E, \E)&\otimes &R\cHom_{p_{\sS}}(\E, \E\otimes K_{\sS/B})[2]\ar[r]\ar[u]_{\partial}& \sO_{\N}
}
\]
The horizontal morphisms intertwine the vertical pairs given by cup product and trace. The morphism $\partial$ is the coboundary morphism of (\ref{eqn_deformation2}), which is the cup product with the canonical extension class 
$$e\in \Ext^1_{\N\times \sX}(\rE, \pi^*\E\otimes K_{\sS/B}^{-1}).$$
Since 
$$R\cHom_{p_{\sS}}(\E, \E\otimes K_{\sS/B})\cong R\cHom_{p_{\sS}}(\E\otimes K_{\sS/B}^{-1}, \pi_*\rE)\cong 
R\cHom_{p_{\sX}}(\pi^*\E\otimes K_{\sS/B}^{-1}, \rE),$$
the result can be seen from the two pairings:
\[
\xymatrix{
R\cHom_{p_{\sX}}(\rE, \rE)\otimes^{L}R\cHom_{p_{\sX}}(\pi^*\E\otimes K_{\sS/B}^{-1}, \rE)[2]
\ar@{=}[dd]\ar[dr]^--{\tr((-)\circ(-)\circ e)}&  \\
&\sO_{\N}\\
R\cHom_{p_{\sX}}(\rE, \rE)\otimes^{L}R\cHom_{p_{\sS}}(\E, \E\otimes K_{\sS/B})[2]\ar[ur]_--{\tr(\pi_*(-)\circ(-))}&
}
\]
are the same. The proof is from \cite[Propositioon 2.21]{TT1}, where the authors use the Grothendieck operators
$\pi^{!}, \pi_{*}$ such that $\pi_* \pi^{!}\to \id$ is counit, and $\pi^{!}=\pi^*\otimes K_{\sS/B}^{-1}[1]$, which is the dualizing complex of $\pi$. All these proofs work for smooth DM stacks $\sX$ and $\pi: \sX\to \sS$. 
\end{proof}

Then the exact triangle  (\ref{eqn_deformation2}) fits into the following commutative diagram 
(\cite[Corollary 2.22]{TT1}):
\[
\xymatrix{
R\cHom_{p_{\sS}}(\E, \E\otimes K_{\sS/B})_{0}[-1]\ar[r]\ar@{<->}[d] &R\cHom_{p_{\sX}}(\rE, \rE)_{\perp}
\ar[r]\ar@{<->}[d] & R\cHom_{p_{\sS}}(\E, \E)_{0}\ar@{<->}[d]\\
R\cHom_{p_{\sS}}(\E, \E\otimes K_{\sS/B})[-1]\ar[r]\ar@{<->}[d]_{\id}^{\tr} &R\cHom_{p_{\sX}}(\rE, \rE)
\ar[r]\ar@{<->}[d] & R\cHom_{p_{\sS}}(\E, \E)\ar@{<->}[d]_{\id}^{\tr}\\
Rp_{\sS *}K_{\sS/B}[-1]\ar@{<->}[r]& Rp_{\sS *}K_{\sS/B}[-1]\oplus Rp_{\sS *}\sO_{\sS}\ar@{<->}[r]& Rp_{\sS *}\sO_{\sS}
}
\]
where $(-)_0$ denotes the trace-free Homs.  The $R\cHom_{p_{\sX}}(\rE, \rE)_{\perp}$ is the co-cone of the middle column. It will provide the symmetric obstruction theory of the moduli space $\N_{L}^{\perp}$ of stable trace free fixed determinant Higgs pairs.

\subsection{The $U(\rk)$ Vafa-Witten invariants}

From Proposition \ref{prop_self_dual}, in the appendix we review that the truncation $\tau^{[-1,0]}R\cHom_{p_{\sX}}(\rE, \rE)$ defines a symmetric perfect obstruction theory on the moduli space $\N$. 

The total space $\sX=\mbox{Tot}(K_{\sS})\to \sS$ admits a $\cc^*$-action which has weight one on the fibers.  The obstruction theory in (\ref{eqn_Atiyah_deformation_obstruction2})  in the Appendix is naturally $\cc^*$-equivariant. 
From \cite{GP}, the $\cc^*$-fixed locus $\N^{\cc^*}$ inherits a perfect obstruction theory
\begin{equation}\label{eqn_deformation_obstruction_fixed_locus}
\left(\tau^{[-1,0]}(R\cHom_{p_{\sX}}(\rE,\rE)[2])\Tt^{-1}\right)^{\cc^*}\to  \ll_{\N^{\cc^*}}
\end{equation}
by taking the fixed part of (\ref{eqn_Atiyah_deformation_obstruction2}).  Therefore it induces a virtual fundamental cycle 
$$[\N^{\cc^*}]^{\vir}\in H_*(\N^{\cc^*}).$$
The virtual normal bundle is given by
$$N^{\vir}:=\left(\tau^{[-1,0]}(R\cHom_{p_{\sX}}(\rE,\rE)[2]\Tt^{-1})^{\mov}\right)^{\vee}
=\tau^{[0,1]}(R\cHom_{p_{\sX}}(\rE,\rE)[1])^{\mov}$$
which is the derived dual of the moving part of (\ref{eqn_Atiyah_deformation_obstruction2}).

Consider the localized invariant
$$\int_{[\N^{\cc^*}]^{\vir}}\frac{1}{e(N^{\vir})}.$$
We explain this a bit.  Represent $N^{\vir}$ as a $2$-term complex $[E_0\to E_1]$ of locally free $\cc^*$-equivariant sheaves with non-zero weights and define 
$$e(N^{\vir}):=\frac{c_{\top}^{\cc^*}(E_0)}{c_{\top}^{\cc^*}(E_1)}\in H^*(\N^{\cc^*}, \zz)\otimes \qq[t, t^{-1}],$$
where $t=c_1(\Tt)$ is the generator of $H^*(B\cc^*)=\zz[t]$, and $c_{\top}^{\cc^*}$ denotes the $\cc^*$-equivariant top Chern class lying in $H^*(\N^{\cc^*}, \zz)\otimes_{\zz[t]} \qq[t, t^{-1}]$. 

\begin{defn}\label{defn_VW1}
Let $\sS$ be a smooth projective surface DM stack. Fixing a generating sheaf $\Xi$ on $\sS$, and a Hilbert polynomial $H$ associated with $\Xi$. Let $\N:=\N_H$ be the moduli space of stable Higgs pairs with  Hilbert polynomial $H$.  Then the primitive Vafa-Witten invariants of $\sS$ is defined as:
$$\widetilde{\VW}_{H}(\sS):=\int_{[\N^{\cc^*}]^{\vir}}\frac{1}{e(N^{\vir})}\in \qq.$$
\end{defn}

\begin{rmk}
\begin{enumerate}
\item The invariant $\widetilde{\VW}_H$  is a constant in $\qq[t,t^{-1}]$ since $\N$ has virtual dimension zero. 
\item For stable sheaves $\sE_\phi$, we have 
$$\Ext^\bullet_{\sX}(\sE_\phi, \sE_\phi)=H^{\bullet-1}(K_{\sS})\oplus H^{\bullet}(\sO_{\sS})\oplus 
\Ext^{\bullet}_{\sX}(\sE_\phi, \sE_\phi)_{\perp},$$
where $\Ext^{\bullet}_{\sX}(\sE_\phi, \sE_\phi)_{\perp}$ is the trace zero part with determinant $L\in \Pic(\sS)$. 
Hence the obstruction sheaf in the obstruction theory (\ref{eqn_Atiyah_deformation_obstruction2}), and (\ref{eqn_deformation_obstruction_fixed_locus}) has a trivial summand $H^2(\sO_{\sS})$. 
So $[\N^{\cc^*}]^{\vir}=0$ if $h^{0,2}(\sS)>0$.  If $h^{0,1}(\sS)\neq 0$, then tensoring with flat line bundle makes the obstruction theory invariant. Therefore the integrand is the  pullback from $\N/\Jac(\sS)$, which is a lower dimensional space, hence zero. 
\end{enumerate}
\end{rmk}

\subsection{$SU(\rk)$ Vafa-Witten invariants}

Let us now fix $(L, 0)\in \Pic(\sS)\times \Gamma(K_{\sS})$, and let $\N^{\perp}_{L}$ be the fibre of 
$$\N/\Pic(\sS)\times\Gamma(K_{\sS}).$$
The moduli space $\N^{\perp}_{L}$ of stable Higgs sheaves $(E,\phi)$ with $\det(E)=L$ and trace-free $\phi\in \Hom(E,E\otimes K_{\sS})_0$ admits a symmetric obstruction theory 
$$R\cHom_{p_{\sX}}(\rE, \rE)_{\perp}[1]\Tt^{-1}\longrightarrow \ll_{\N^{\perp}_{L}}$$
from Proposition \ref{prop_symmetric_POT_Perp}. 

\begin{defn}\label{defn_SU_VW_invariants}
Let $\sS$ be a smooth projective surface DM stack.  Fix a generating sheaf $\Xi$ for $\sS$, and a Hilbert polynomial $H$ associated with $\Xi$.  Let $\N^{\perp}_{L}:=\N_{L}^{\perp, H}$ be the moduli space of stable Higgs sheaves $(E,\phi)$ with Hilbert polynomial $H$, $\det(E)=L$ and trace free $\phi$.  Then define
$$\VW_{H}(\sS):=\int_{[(\N^{\perp}_{L})^{\cc^*}]^{\vir}}\frac{1}{e(N^{\vir})}.$$
\end{defn}
\begin{rmk}
\begin{enumerate}
\item Since we work on a surface DM stack $\sS$, it maybe better to fix the K-group class $\mathbf{c}\in K_0(\sS)$ such that the Hilbert polynomial of $\mathbf{c}$ is $H$.  Then 
$\VW_{\mathbf{c}}(\sS)=\int_{[(\N^{\perp}_{L})^{\cc^*}]^{\vir}}\frac{1}{e(N^{\vir})}$ is Vafa-Witten invariant corresponding to $\mathbf{c}$. 
\item This definition $\VW_{H}(\sS)$ agrees with $\widetilde{\VW}_{H}(\sS)$ defined before for $\sS$ if  
$h^{0,1}(\sS)=h^{0,2}(\sS)=0$.
\item As in \cite[\S 6.1]{TT1}, one can define more general Higgs pairs by replacing $K_{\sS}$ by any line bundle 
$\sL\to \sS$ requiring $\deg \sL\geq \deg K_{\sS}$. But the obstruction theory will not be symmetric. 
\end{enumerate}
\end{rmk}

\subsection{$\cc^*$-fixed loci}\label{subsec_CStar_fixed_locus}

For the moduli space $\N^{\perp}_{L}$, we discuss the $\cc^*$-fixed loci. 

\subsubsection{The first locus $\phi=0$}\label{subsubsec_first_type}

For the Higgs pairs $(E,\phi)$ such that $\phi=0$, the $\cc^*$-fixed locus is exactly the moduli space $\rM_{L}$ of Gieseker stable sheaves on $\sS$ with fixed determinant $L$ and Hilbert polynomial $H$ associated with the generating sheaf $\Xi$.  The exact triangle in (\ref{eqn_deformation2}) splits the obstruction theory
$$R\cHom_{p_{\sX}}(\rE, \rE)_{\perp}[1]\Tt^{-1}\cong R\cHom_{p_{\sS}}(\E, \E\otimes K_{\sS})_{0}[1]\oplus 
R\cHom_{p_{\sS}}(\E, \E)_{0}[2]\Tt^{-1}$$
where $\Tt^{-1}$ represents the moving part of the $\cc^*$-action. Then the $\cc^*$-action induces a perfect obstruction theory 
$$E_{\rM}^{\bullet}:=R\cHom_{p_{\sS}}(\E, \E\otimes K_{\sS})_{0}[1]\to \ll_{\rM_{L}}.$$
The virtual normal bundle 
$$N^{\vir}=R\cHom_{p_{\sS}}(\E, \E\otimes K_{\sS})_{0}\Tt=E_{\rM}^{\bullet}\otimes \Tt[-1].$$
So the invariant contributed from $\rM_{L}$ (we can let $E_{\rM}^{\bullet}$ is quasi-isomorphic to $E^{-1}\to E^0$) is: 
\begin{align*}
\int_{[\rM_{L}]^{\vir}}\frac{1}{e(N^{\vir})}&=\int_{[\rM_{L}]^{\vir}}\frac{c_s^{\cc^*}(E^0\otimes \Tt)}{c_r^{\cc^*}(E^{-1}\otimes \Tt)}\\
&=\int_{[\rM_{L}]^{\vir}}\frac{c_s(E^0)+\Tt c_{s-1}(E^0)+\cdots}{c_r(E^{-1})+\Tt c_{r-1}(E^{-1})+\cdots}
\end{align*}
Here we assume $r$ and $s$ are the ranks of $E^{-1}$ and $E^0$ respectively, and $r-s$ is the virtual dimension of 
$\rM_{L}:=\rM_{L,H}$.  By the virtual dimension consideration, only $\Tt^0$ coefficient contributes and we may let $\Tt=1$, so
\begin{align}\label{eqn_virtual_Euler_number}
\int_{[\rM_{L}]^{\vir}}\frac{1}{e(N^{\vir})}&=\int_{[\rM_{L}]^{\vir}}\Big[\frac{c_{\bullet}(E^0)}{c_{\bullet}(E^{-1})}\Big]_{\vd}\nonumber \\
&=\int_{[\rM_{L}]^{\vir}}c_{\vd}(E_{\rM}^{\bullet})\in \zz.
\end{align}
This is the signed virtual Euler number of Ciocan-Fontanine-Kapranov/Fantechi-G\"ottsche. 

\begin{prop}\label{prop_K_S_fixed_locus}
Let us fix a generating sheaf $\Xi$ on $\sS$.  If $\deg K_{\sS}\leq 0$, then any stable $\cc^*$-fixed Higgs pair 
$(E,\phi)$ has Higgs field $\phi=0$. Therefore if we fix some $K$-group class $\mathbf{c}\in K_0(\sS)$, then 
$\VW^{L}_{\mathbf{c}}(\sS)$ is the same as the signed virtual Euler number in (\ref{eqn_virtual_Euler_number}).
\end{prop}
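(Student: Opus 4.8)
The statement has two parts, and the second is formal: once we know that $\phi=0$ is forced, the $\cc^*$-fixed locus $(\N^{\perp}_{L})^{\cc^*}$ coincides with the first (instanton) component $\rM_{L}$ analysed in \S\ref{subsubsec_first_type}, and the splitting of the obstruction theory computed there gives the contribution as the signed virtual Euler number $(\ref{eqn_virtual_Euler_number})$ with no further work. So the plan is to concentrate entirely on the claim that $\phi=0$, adapting the surface argument of Tanaka--Thomas and checking that each input survives on the stack $\sS$. I would work on the spectral side: by Proposition \ref{prop_equivalent_categories} the pair $(E,\phi)$ corresponds to a compactly supported sheaf $\sE:=\sE_{\phi}$ on $\sX=\mathrm{Tot}(K_{\sS})$ on which the fibre-scaling $\cc^*$ acts, and the Higgs field is encoded as the action of the tautological section $\eta$ of $\pi^{*}K_{\sS}$. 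By Proposition \ref{prop_Gieseker_matches} the Gieseker stability of $(E,\phi)$ is exactly the stability of $\sE$, so I may use that $\sE$ is a \emph{stable} (hence simple) $\cc^*$-equivariant sheaf. Since a $\cc^*$-fixed compact support inside $\mathrm{Tot}(K_{\sS})$ must lie in a thickening of the zero section (a $\cc^*$-invariant, fibrewise-finite subscheme of a line meets it only at the origin), the operator $\eta$ acts \emph{nilpotently}, and $\phi\ne 0$ is equivalent to $\eta\sE\ne 0$.

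The key device is that the image $\mathcal{I}:=\im(\eta\colon\sE\to\sE\otimes\pi^{*}K_{\sS})=\eta\sE$ is simultaneously a quotient of $\sE$ and a subsheaf of the twist $\sE\otimes\pi^{*}K_{\sS}$. Assume $\phi\ne 0$. Because $\eta$ is nilpotent and $\sE\ne 0$, the kernel $\ker(\cdot\eta)$ is a proper, nonzero, $\cc^*$-invariant submodule, and $\eta\sE$ is proper as well. Stability of $\sE$ applied to the submodule $\ker(\cdot\eta)$ gives, for the quotient $\mathcal{I}\cong\sE/\ker(\cdot\eta)$, the strict inequality $h_{\Xi}(\mathcal{I})>h_{\Xi}(\sE)$. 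On the other hand, tensoring by the line bundle $\pi^{*}K_{\sS}$ preserves stability, so $\sE\otimes\pi^{*}K_{\sS}$ is stable, and the proper $\cc^*$-invariant subsheaf $\mathcal{I}\subseteq\sE\otimes\pi^{*}K_{\sS}$ satisfies $h_{\Xi}(\mathcal{I})<h_{\Xi}(\sE\otimes\pi^{*}K_{\sS})$. Chaining these,
\[
h_{\Xi}(\sE)\;<\;h_{\Xi}(\mathcal{I})\;<\;h_{\Xi}(\sE\otimes\pi^{*}K_{\sS}),
\]
and comparing subleading coefficients of these monic reduced modified Hilbert polynomials forces $\mu_{\Xi}(\sE)<\mu_{\Xi}(\sE\otimes\pi^{*}K_{\sS})=\mu_{\Xi}(\sE)+\deg K_{\sS}$, i.e.\ $\deg K_{\sS}>0$. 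This contradicts $\deg K_{\sS}\le 0$, so $\phi=0$.

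The step I expect to be the main obstacle is the borderline case $\deg K_{\sS}=0$. When $\deg K_{\sS}<0$ the slope comparison above is immediate. When $\deg K_{\sS}=0$ the two slopes agree and the displayed chain only constrains the \emph{lower-order} term of $h_{\Xi}$, so one must argue that $h_{\Xi}(\sE\otimes\pi^{*}K_{\sS})=h_{\Xi}(\sE)$ in order to clash with the strict inequality $h_{\Xi}(\mathcal{I})>h_{\Xi}(\sE)$. The clean way is to observe that for the relevant surfaces $\deg K_{\sS}=0$ actually forces $K_{\sS}$ to be numerically trivial (for $K_{\sS}$ nef of degree $0$ this is Hodge index; for the examples of interest such as the orbifold K3 surfaces one has $K_{\sS}$ torsion), whence every pairing of $c_{1}(K_{\sS})$ against the Chern data of $\sE$ vanishes and the modified Hilbert polynomial is unchanged under the twist. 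I would isolate this numerical-triviality input as a separate remark.

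Finally I would check the stacky inputs, all of which were set up in \S\ref{subsec_moduli_Higgs_pairs}: the affineness of $\pi\colon\sX\to\sS$ and the module description $\pi_{*}\sO_{\sX}=\bigoplus_{i\ge0}K_{\sS}^{-i}\eta^{i}$ (so that $\eta\sE$ and $\ker(\cdot\eta)$ are genuine $\cc^*$-equivariant submodules), the exactness of $F_{\Xi}$ and the additivity of $h_{\Xi}$ on short exact sequences (needed to pass between $\mathcal I$ as sub and as quotient), and the invariance of Gieseker stability under tensoring by a line bundle for the modified Hilbert polynomial. Each holds verbatim in Nironi's framework, so no genuinely new difficulty arises beyond the scheme case of \cite{TT1}. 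With $\phi=0$ established, the identification of $\VW^{L}_{\mathbf{c}}(\sS)$ with $(\ref{eqn_virtual_Euler_number})$ follows from the discussion of \S\ref{subsubsec_first_type}.
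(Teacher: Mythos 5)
Your argument is, at bottom, the same as the paper's, only transported to the spectral side. The paper's proof works directly on $\sS$: it considers $0\to \ker\phi\to E\stackrel{\phi}{\longrightarrow} E\otimes K_{\sS}$, notes that $\ker\phi$ and $\Im\phi$ are $\phi$-invariant, and uses Gieseker stability of the Higgs pair to obtain the chain $p_{E,\Xi}<p_{\Im\phi,\Xi}<p_{E\otimes K_{\sS},\Xi}$ unless $\Im\phi=0$ or $\Im\phi=E\otimes K_{\sS}$; it then asserts that $\deg K_{\sS}\leq 0$ forces $p_{E\otimes K_{\sS},\Xi}\leq p_{E,\Xi}$, and finally excludes the isomorphism case because a $\cc^*$-fixed $\phi$ has determinant zero. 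Under Propositions \ref{prop_equivalent_categories} and \ref{prop_Gieseker_matches}, submodules of $\sE_{\phi}$ are exactly $\phi$-invariant subsheaves of $E$, so your $\mathcal{I}=\eta\sE$ is the paper's $\Im\phi$, your kernel--image seesaw is the paper's chain of inequalities, and your nilpotency-of-$\eta$ observation is precisely the paper's exclusion of the isomorphism case (fixedness forces $\det\phi=0$, equivalently $\phi$ nilpotent). Your reduction of the second claim to the instanton-branch computation of (\ref{eqn_virtual_Euler_number}) is also what the paper intends, and indeed its proof does not spell that part out either.

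Two cautions on your write-up. First, the claim that $h_{\Xi}(\sE)<h_{\Xi}(\sE\otimes\pi^{*}K_{\sS})$ \emph{forces} $\mu_{\Xi}(\sE)<\mu_{\Xi}(\sE\otimes\pi^{*}K_{\sS})$ overstates what lexicographic comparison of reduced Hilbert polynomials gives (only $\leq$); you implicitly repair this by splitting into $\deg K_{\sS}<0$ and $\deg K_{\sS}=0$. Second, the borderline case you flag is real, but your proposed patch is conditional: degree zero does \emph{not} imply numerical triviality of $K_{\sS}$ without nefness --- by Hodge index $H\cdot K_{\sS}=0$ only gives $K_{\sS}^2\leq 0$, and there are polarized surfaces (e.g. $C\times\pp^1$ with $g(C)\geq 2$ and a suitable polarization) with $\deg K=0$ and $K\not\equiv 0$. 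That said, this is exactly the point where the paper's own proof is also only an assertion: its inequality $p_{E\otimes K_{\sS},\Xi}\leq p_{E,\Xi}$ in the degree-zero case depends on the sign of terms like $c_1(E\otimes\Xi^{\vee})\cdot c_1(K_{\sS})$ and is not justified there (a related silent assumption, shared by both proofs, is that Gieseker stability is compatible with twisting by $K_{\sS}$, which, unlike slope stability, is not automatic for twists not proportional to the polarization). So relative to the paper your attempt is not missing any idea; for $\deg K_{\sS}<0$ it is complete and identical in substance, and in the $\deg K_{\sS}=0$ case it is at least candid about the gap that the paper's proof papers over.
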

\begin{proof}
Consider 
$$0\to \ker \phi\longrightarrow E\stackrel{\phi}{\longrightarrow} E\otimes K_{\sS}$$
where $\ker \phi$ and $\Im \phi$ are $\phi$-invariant, and  for the generating sheaf $\Xi$, Gieseker stability gives:
$$p_{E, \Xi}(n)<p_{\Im\phi, \Xi}(n)< p_{E\otimes K_{\sS}, \Xi}(n),  \forall n>>0$$
unless $\Im \phi=0$ or $E\otimes K_{\sS}$.  The $\deg(K_{\sS})\leq 0$ implies that 
$$p_{E\otimes K_{\sS}, \Xi}(n)\leq p_{E,\Xi}(n)$$
for $n>>0$. 
So either $\phi=0$ or $\phi$ is an isomorphism.  But $\phi$ is $\cc^*$-fixed and it has determinant zero, it can not be an isomorphism. 
\end{proof}

Also we have:

\begin{prop}
If $\deg K_{\sS}< 0$, then any semistable $\cc^*$-fixed Higgs pair $(E,\phi)$ has Higgs field $\phi=0$. 
\end{prop}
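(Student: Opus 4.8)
The plan is to re-run the kernel/image degeneration argument of Proposition \ref{prop_K_S_fixed_locus}, but to let the \emph{strict} hypothesis $\deg K_{\sS}<0$ do the work that strict stability did there, so that semistability (with all inequalities relaxed to $\leq$) still forces a contradiction. Assume for contradiction that $\phi\neq 0$ and consider
$$0\to \ker\phi\longrightarrow E\stackrel{\phi}{\longrightarrow} E\otimes K_{\sS},$$
so that $\Im\phi\cong E/\ker\phi$ is a torsion-free, hence pure two-dimensional, sheaf sitting inside $E\otimes K_{\sS}$. All the reduced modified Hilbert polynomials $h_{\Xi}$ below are therefore comparable.

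First I would extract the two relevant inequalities. The kernel $\ker\phi\subset E$ is $\phi$-invariant, so when it is proper and nonzero semistability gives $h_{\Xi}(\ker\phi)\leq h_{\Xi}(E)$, which by the standard short-exact-sequence comparison of reduced Hilbert polynomials is equivalent to $h_{\Xi}(E)\leq h_{\Xi}(\Im\phi)$ (and this holds as an equality when $\ker\phi=0$, since then $\Im\phi\cong E$). Dually, $\Im\phi$ is invariant under the Higgs field $\phi\otimes\id$ on $E\otimes K_{\sS}$, because $(\phi\otimes\id)(\phi(e))\in \phi(E)\otimes K_{\sS}=\Im\phi\otimes K_{\sS}$; since tensoring by a line bundle preserves semistability of a Higgs pair, applying the semistability of $(E\otimes K_{\sS},\phi\otimes\id)$ to the proper subsheaf $\Im\phi$ yields $h_{\Xi}(\Im\phi)\leq h_{\Xi}(E\otimes K_{\sS})$.

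Next I would verify that $\Im\phi$ is genuinely a \emph{proper} subsheaf of $E\otimes K_{\sS}$, which is exactly where the strict hypothesis enters. If $\Im\phi=E\otimes K_{\sS}$ then $\phi$ is surjective, hence an isomorphism by equality of ranks and torsion-freeness of $E$, forcing $\det E\cong\det(E\otimes K_{\sS})=\det E\otimes K_{\sS}^{\otimes\rk}$ and therefore $K_{\sS}^{\otimes\rk}\cong\sO_{\sS}$, contradicting $\deg K_{\sS}<0$. Finally I would combine the two inequalities with the effect of the negative twist: because $\deg K_{\sS}<0$, tensoring by $K_{\sS}$ strictly lowers the slope, so $h_{\Xi}(E\otimes K_{\sS})(n)<h_{\Xi}(E)(n)$ for $n\gg0$. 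Chaining everything gives
$$h_{\Xi}(E)\leq h_{\Xi}(\Im\phi)\leq h_{\Xi}(E\otimes K_{\sS})<h_{\Xi}(E),\qquad n\gg 0,$$
a contradiction, so $\phi=0$.

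The routine inputs—the sub/quotient equivalence of Hilbert-polynomial inequalities, the $\phi\otimes\id$-invariance of $\Im\phi$, and the sign of the slope shift under a negative twist—are all standard. The one step deserving genuine care, and the main obstacle, is excluding $\Im\phi=E\otimes K_{\sS}$: it is precisely the strictness $\deg K_{\sS}<0$ (not merely $\leq 0$) that rules out $\phi$ being an isomorphism. This is why, in contrast with Proposition \ref{prop_K_S_fixed_locus}, the conclusion holds for all semistable Higgs pairs and no use of the $\cc^*$-fixed condition is actually needed.
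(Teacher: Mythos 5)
Your proof is correct, and its skeleton is the one the paper intends: the paper's proof of this proposition is literally a citation of Proposition \ref{prop_K_S_fixed_locus}, i.e.\ the kernel/image sequence, the chain $h_{\Xi}(E)\leq h_{\Xi}(\Im\phi)\leq h_{\Xi}(E\otimes K_{\sS})$ from (semi)stability, and the strict drop $h_{\Xi}(E\otimes K_{\sS})<h_{\Xi}(E)$ forced by $\deg K_{\sS}<0$. The one genuinely different step is how you exclude the degenerate case $\Im\phi=E\otimes K_{\sS}$. The paper rules out $\phi$ being an isomorphism using the $\cc^*$-fixed hypothesis: the cocycle condition $\psi_t\circ\phi\circ\psi_t^{-1}=t\phi$ scales $\det\phi$ by $t^{\rk}$, so $\det\phi=0$ and $\phi$ is nilpotent. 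You instead note that an isomorphism $E\cong E\otimes K_{\sS}$ would give $K_{\sS}^{\otimes\rk}\cong\sO_{\sS}$, impossible because $\rk\cdot\deg K_{\sS}<0$. Your route is cleaner and buys a strictly stronger statement: the conclusion holds for \emph{every} semistable Higgs pair, so the $\cc^*$-fixed hypothesis is redundant here --- a point the paper's proof-by-reference obscures. One caveat, which you share with the paper rather than introduce: the inequality $h_{\Xi}(\Im\phi)\leq h_{\Xi}(E\otimes K_{\sS})$ is justified by ``twisting preserves semistability,'' which for Gieseker-type (as opposed to slope) stability is delicate, since twisting shifts the constant term of the reduced Hilbert polynomial by a quantity depending on $c_1/\rk$ of the subsheaf. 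A bulletproof variant avoids it entirely: $\Im\phi\otimes K_{\sS}^{-1}\subset E$ is itself $\phi$-invariant (its image under $\phi$ lies in $\Im\phi$), so semistability gives $h_{\Xi}(\Im\phi\otimes K_{\sS}^{-1})\leq h_{\Xi}(E)\leq h_{\Xi}(\Im\phi)$, while $\deg K_{\sS}<0$ forces $h_{\Xi}(\Im\phi\otimes K_{\sS}^{-1})>h_{\Xi}(\Im\phi)$ for large argument --- the same contradiction, but each sheaf is only ever compared with its own twist; equivalently one can run your whole argument with $\mu_{\Xi}$-semistability, which Gieseker semistability implies and which the paper itself uses in its Lemma \ref{lem_E0E1}.
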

\begin{proof}
This is the same as Proposition \ref{prop_K_S_fixed_locus}.
\end{proof}

\subsubsection{The second fixed locus $\phi\neq 0$}\label{subsec_second_fixed_loci}

The second component $\rM^{(2)}$ corresponds to the $\cc^*$-fixed Higgs pairs $(E,\phi)$ such that the  Higgs fields $\phi\neq 0$. Let $(E,\phi)$ be a $\cc^*$-fixed stable Higgs pair.  Since the $\cc^*$-fixed stable sheaves $\sE_{\phi}$ are simple,  we  use \cite[Proposition 4.4]{Kool}, \cite{GJK} to make this stable sheaf 
$\cc^*$-equivariant.  The cocycle condition in the  $\cc^*$-equivariant definition for the Higgs pair $(E,\phi)$ corresponds to a $\cc^*$-action 
$$\psi: \cc^*\to \Aut(E)$$
such that 
\begin{equation}\label{eqn_cocycle_condition}
\psi_{t}\circ \phi\circ \psi_t^{-1}=t\phi
\end{equation}  
With respect to the $\cc^*$-action on $E$,  it splits into a direct sum of eigenvalue subsheaves
$$E=\oplus_{i}E_i$$
where $E_i$ is the weight space such that $t$ acts by $t^i$, i.e., $\psi_t=\mbox{diag}(t^i)$. 
The action acts on the Higgs field with weight one by (\ref{eqn_cocycle_condition}).
Also for a Higgs pair $(E,\phi)$, if a $\cc^*$-action on $E$ induces weight one action on $\phi$, then it is a fixed point of the $\cc^*$-action.

Since the $\cc^*$-action on the canonical line bundle $K_{\sS}$ has weight $-1$, $\phi$ decreases the weights, and it maps the lowest weight torsion subsheaf to zero, hence zero by stability. 
So each $E_i$ is torsion free and have rank $> 0$.  Thus $\phi$ acts blockwise through morphisms
$$\phi_i: E_i\to E_{i-1}.$$
These are flags of torsion-free sheaves on $\sS$, see \cite{TT1}. 

In the case that $E_i$ has rank $1$, they are twisted by line bundles, and $\phi_i$ defining nesting of ideals. Then this is the nested Hilbert scheme on $\sS$. Very little is known of nested Hilbert schemes for surface DM stacks.

\section{Calculations}\label{sec_calculations}

In this section we do some calculations on two type of  general type surface DM stacks, one is for a $r$-root stack  over a smooth quintic surface, and the other is for quintic surface with ADE singularities.  

\subsection{Root stack on quintic surfaces}\label{subsec_root_stacks}

Let $S\subset \pp^3$ be a smooth quintic surface in $\pp^3=\proj(\cc[x_0:x_1:x_2,x_3])$, given by a homogeneous degree $5$ polynomial.  Let $C\subseteq |K_{S}|$ be a smooth connected canonical divisor such that 
$\sO_S$ is the only line bundle $L$ satisfying $0\leq \deg L\leq \frac{1}{2}\deg K_{S}$ where the degree is defined  by 
$\deg L=c_1(L)\cdot c_1(\sO_S(1))$.  Then we have the following topological invariants:
\begin{equation}\label{eqn_topology_invariants_quintic}
\begin{cases}
g_{C}=1+c_1(S)^2=1+5=6;\\
h^0(K_{S})=p_{g}(S)=\frac{1}{12}(c_1(S)^2+c_2(S))-1=\frac{1}{12}(5+55)-1=4;\\
h^0(K_S^2)=p_g(S)+g_C=10.
\end{cases}
\end{equation}
Let 
$$\sS:=\sqrt[r]{(S,C)}$$
be the root stack associated with the divisor $C$.  The stack  $\sS=\sqrt[r]{(S,C)}$ is the $r$-th root stack associated with the line bundle $\sO_S(C)$.  Let 
$$p: \sS\to S$$
be the projection to its coarse moduli space $S$, and let 
$\sC:=p^{-1}(C)$.
We still use $p: \sC\to C$ to represent the projection and it is a $\mu_r$-gerbe over $C$. 
The canonical line bundle $K_{\sS}$ satisfies the formula 
$$K_{\sS}=p^*K_{S}+\frac{r-1}{r}\sO_{\sS}(\sC)=\sO_{\sS}(\sC).$$

Recall that $\sX=\mbox{Tot}(K_{\sS})$, and $X:=\mbox{Tot}(K_{S})$, and let 
$$\pi: \sX\to \sS; \quad  \pi: X\to S$$
be the projections. 
We pick the generating sheaf ``$\Xi=\oplus_{i=0}^{r}\sO_{\sS}(i\sC^{\frac{1}{r}})$", and a Hilbert polynomial $H$, and let 
$\N_H$ be the moduli space of stable Higgs sheaves on $\sS$ with Hilbert polynomial $H$. 
\begin{rmk}
For the choice of the generating sheaf $\Xi$, the modified Hilbert polynomial $H$ of torsion free sheaves on $\sS$ corresponds to the parabolic Hilbert polynomial $H$ for parabolic sheaves on $(S,C)$. 
The moduli space of stable sheaves with modified Hilbert polynomial $H$ is actually isomorphic to the moduli space of stable parabolic sheaves on $(S,C)$ with parabolic Hilbert polynomial $H$ defined in \cite{MY}. 
The moduli space of stable Higgs sheaves on $\sS$ with modified Hilbert polynomial $H$ is also isomorphic to the 
moduli space of parabolic stable Higgs sheaves on $(S,C)$  with Hilbert polynomial $H$, see \cite{JT}. 

The perfect obstruction theory constructed in the Appendix actually implies that there exists a perfect obstruction theory on the moduli space of stable  sheaves with modified Hilbert polynomial $H$.  Therefore there exists a perfect obstruction theory on the moduli space of stable parabolic sheaves on $(S,C)$. One can study the perfect obstruction theory and the corresponding defining  invariants  for parabolic sheaves on $(S,C)$ by sheaves on the  root stack $\sS$.
\end{rmk}

\subsubsection{$\cc^*$-fixed Higgs pairs on $\rM^{(2)}$}\label{subsec_fixed_pairs}
The $\cc^*$ acts on $\sX$ by scaling the fibers of $\sX\to \sS$.  Let $(E,\phi)$ be a $\cc^*$-fixed rank $2$ Higgs pair with fixed determinant $L=K_{\sS}$ in the second component $\rM^{(2)}$ in \S \ref{subsec_second_fixed_loci}.   
Then since all the $E_i$ have rank bigger than zero, 
$$E=E_i\oplus E_j.$$
Without loss of generality, we may let 
$E=E_0\oplus E_{-1}$ since tensoring $E$ by $\Tt^{-i}$,  $E_i$ goes to $E_0$, where $\Tt$ is the standard one dimensional $\cc^*$-representation of weight one. 
Then considering $\phi$ as a weight zero element of $\Hom(E,E\otimes K_{\sS})\otimes \Tt$, we have 
$$E=E_0\oplus E_{-1}, \text{~and~} \phi= \left(
\begin{array}{cc}
0&0\\
\iota&0
\end{array}
\right)$$
for some 
$\iota: E_0\to E_{-1}\otimes K_{\sS}\otimes \Tt$. Then $E_{-1}\hookrightarrow E$ is a $\phi$-invariant subsheaf, and by semistability (Gieseker stable implies $\mu$-semistable)
we have 
$$\mu_{\Xi}(E_{-1})\leq \mu_{\Xi}(E_0)=\mu_{\Xi}(K_{\sS})-\mu_{\Xi}(E_{-1}).$$
The existence of the nonzero map $\Phi: E_0\to E_{-1}\otimes K_{\sS}$ implies:
$$\mu_{\Xi}(E_{-1})+\mu_{\Xi}(K_{\sS})\geq \mu_{\Xi}(E_0)=\mu_{\Xi}(K_{\sS})-\mu_{\Xi}(E_{-1}).$$
So 
\begin{equation}\label{eqn_inequality}
0\leq \mu_{\Xi}(E_{-1})\leq \frac{1}{2}\mu_{\Xi}(K_{\sS}).
\end{equation}

\begin{lem}\label{lem_E0E1}
The inequality (\ref{eqn_inequality}) implies that 
$$\det(E_{-1})=\sO_{\sS}; \text{~and~} \det(E_{0})=K_{\sS}.$$
\end{lem}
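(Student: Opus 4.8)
The plan is to reduce the stacky slope inequality (\ref{eqn_inequality}) to a numerical constraint on a line bundle of the coarse surface $S$, and then to invoke the defining property of the canonical curve $C$. First I would record the easy structural facts. Since $(E,\phi)$ lies in $\rM^{(2)}$ with $\phi\neq 0$, we have $E=E_0\oplus E_{-1}$ with $E_0,E_{-1}$ torsion-free of rank $1$, so $\det E_0$ and $\det E_{-1}$ are genuine line bundles on $\sS$ and $\det E_0\otimes\det E_{-1}=\det E=K_{\sS}$. Hence it suffices to prove $\det E_{-1}=\sO_{\sS}$; the identity $\det E_0=K_{\sS}$ is then immediate. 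I would also note that for a rank-$1$ torsion-free sheaf the coefficients $\alpha_{\Xi,2}$ and $\alpha_{\Xi,1}$ of the modified Hilbert polynomial, and hence the slope $\mu_{\Xi}$, depend only on $c_1$, i.e. only on the determinant; so (\ref{eqn_inequality}) is really a constraint on the single line bundle $\sL_{-1}:=\det E_{-1}$.

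Next I would make the slope explicit on the root stack. Using the description of $\Pic(\sS)$ for $\sS=\sqrt[r]{(S,C)}$, I would write $\sL_{-1}=p^*L\otimes\sO_{\sS}(\tfrac{a}{r}\sC)$ with $L\in\Pic(S)$ and $0\le a\le r-1$. Applying $F_{\Xi}$ with the chosen generating sheaf $\Xi=\bigoplus_{i=0}^{r-1}\sO_{\sS}(\tfrac{i}{r}\sC)$ and computing the pushforwards $p_*\sO_{\sS}(\tfrac{j}{r}\sC)$, one expresses $\mu_{\Xi}(\sL_{-1})$ as an affine-linear, strictly increasing function of the pair $(\deg_S L,\,a)$, where $\deg_S L=c_1(L)\cdot c_1(\sO_S(1))$ and $\deg_S C=\deg K_S$. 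This is precisely the bookkeeping that, under the equivalence in the Remark of \S\ref{subsec_root_stacks}, turns the modified Hilbert polynomial on $\sS$ into the parabolic Hilbert polynomial on $(S,C)$; carrying out the same computation for $K_{\sS}=\sO_{\sS}(\sC)$ evaluates $\mu_{\Xi}(K_{\sS})$ in the same terms.

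Finally I would feed this into (\ref{eqn_inequality}). The two ends of the inequality pin the stacky exponent to $a=0$ and confine $\deg_S L$ to the window $0\le\deg_S L\le\tfrac{1}{2}\deg K_S$. By the defining property of $C$ chosen in \S\ref{subsec_root_stacks} --- that $\sO_S$ is the only line bundle $L$ with $0\le\deg L\le\tfrac{1}{2}\deg K_S$ --- this forces $L=\sO_S$, whence $\sL_{-1}=\sO_{\sS}$ and therefore $\det E_0=K_{\sS}\otimes\sL_{-1}^{-1}=K_{\sS}$.

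I expect the main obstacle to be the explicit slope computation on $\sS$: one must correctly compute the pushforwards $p_*\sO_{\sS}(\tfrac{j}{r}\sC)$ and apply Riemann--Roch on the coarse surface so that $\mu_{\Xi}$ becomes a manifestly monotone affine function of $(\deg_S L,a)$, and in particular so that the fractional twists with $0<a<r$ are ruled out rather than merely the integral part of the degree being controlled. Once the slope is placed in this normal form, the conclusion is the same short numerical argument as in the smooth quintic case of \cite[\S 8]{TT1}, now with the Picard-lattice hypothesis on $S$ inherited through $p\colon\sS\to S$.
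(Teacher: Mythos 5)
Your outline coincides with the paper's own proof: reduce the stacky slope to coarse degrees via the generating sheaf $\Xi=\oplus_{i=0}^{r-1}\sO_{\sS}(i\sC^{\frac{1}{r}})$, invoke the hypothesis that $\sO_S$ is the only line bundle on $S$ with degree in $[0,\tfrac{1}{2}\deg K_S]$, and use Cadman's description of $\Pic(\sS)$. The organizational difference (you normalize $\det E_{-1}=p^*L\otimes\sM^{a}$ at the outset, where $\sM:=\sO_{\sS}(\sC^{\frac{1}{r}})$, while the paper pushes forward to $S$ first and deals with the stacky part last) is harmless. The problem is the step you yourself flag as the main obstacle: the claim that the two ends of (\ref{eqn_inequality}) pin the stacky exponent to $a=0$. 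That claim fails, and your proof has a genuine gap exactly there.

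Concretely: since $\sM^{r}=p^*\sO_S(C)$ and $C\in|K_S|$, one has $K_{\sS}=p^*K_S\otimes\sM^{r-1}$, and any degree function on $\Pic(\sS)$ that is additive (as the derivation of (\ref{eqn_inequality}) from stability plus the nonzero map $E_0\to E_{-1}\otimes K_{\sS}$ requires) and compatible with $\mu_{\Xi}$-comparisons of rank-one sheaves takes the value $\deg_S L+\frac{a}{r}\deg K_S$ on $p^*L\otimes\sM^{a}$, hence the value $\frac{2r-1}{r}\deg K_S$ on $K_{\sS}$. So (\ref{eqn_inequality}) reads $0\le\deg_S L+\frac{a}{r}\deg K_S\le\frac{2r-1}{2r}\deg K_S$, and since $\frac{a}{r}\le\frac{r-1}{r}=\frac{2r-2}{2r}<\frac{2r-1}{2r}$, the choice $L=\sO_S$ with \emph{any} $0\le a\le r-1$ lies inside the window. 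Monotonicity in $a$ therefore excludes nothing: the window is wider than the entire range of the fractional contribution. Geometrically this is unavoidable: $\sM^{a}$ is an effective line bundle of small positive degree on $\sS$ (its canonical section vanishes on the stacky divisor), exactly the kind of object the hypothesis on $C$ excludes on $S$ but which does exist on $\sS$; and the pair $E=\bigl(p^*K_S\otimes\sM^{r-1-a}\bigr)\oplus\sM^{a}\Tt^{-1}$, with $\iota$ the canonical section of $\sM^{2a}$, satisfies both constraints your argument uses (the stability inequality against the invariant subsheaf $E_{-1}$, and nonvanishing of $\iota$). The paper does not rest this step on your mechanism: it applies the degree bound to the pushforward $p_*E_{-1}$ --- whose determinant sees only $\deg_S L$ --- deduces $\det p_*E_{-1}=\sO_S$ from the hypothesis, and then removes the residual twist $\sM^{a}$ by a separate structural argument about $p_*$ on the root bundle (Cadman's theorem together with the asserted behavior of $p_*\sO_{\sS}(\sC^{\frac{1}{r}})$). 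Whatever input ultimately justifies that last step (for instance the fixed class $\mathbf{c}\in K_0(\sS)$, which records the $\mu_r$-weights of $E|_{\sC}$), it is something beyond the slope inequality, so a proof that rests the exclusion of $0<a<r$ entirely on (\ref{eqn_inequality}) cannot close.
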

\begin{proof}
Since the generating sheaf $\Xi=\oplus_{i=0}^{r}\sO_{\sS}(\sC^{\frac{i}{r}})$, and $K_{\sS}\cong \sO_{\sS}(\sC)$, we look at the modified Hilbert polynomial 
$$H_{\Xi}(K_{\sS},m) = \chi(\sS,K_{\sS}\otimes \Xi^{\vee}\otimes p^*\sO_S(m)).$$
Recall that $p: \sS\to S$ is the morphism to its coarse moduli space. 
We have:
$$ \chi(\sS,K_{\sS}\otimes \Xi^{\vee}\otimes p^*\sO_S(m))=
  \chi(S,p_*K_{\sS}\otimes p_*\Xi^{\vee}\otimes \sO_S(m))$$
From Definition \ref{defn_slope_stability} and the definition of the rank in (3) of Remark \ref{rem_stability}, since $E_{-1}$ and $K_{\sS}$ all have rank one, 
$\mu_{\Xi}(K_{\sS})$ and $\mu_{\Xi}(E_{-1})$ can be calculated by the modified degree of 
$p_*K_{\sS}=p_*(\sO_{\sS}(\sC))=\sO_{S}(C)=K_S$ and $p_*E_{-1}$. 
We have 
\begin{equation}\label{lem_eqn_1}
\deg_{\Xi}(K_S)=\deg(K_S)+\sum_{i=1}^{r}\deg(K_{S}\otimes p_*\sO_{\sS}(-\sC^{\frac{i}{r}}))
\end{equation}
and 
\begin{equation}\label{lem_eqn_2}
\deg_{\Xi}(p_*E_{-1})=\deg(p_*E_{-1})+\sum_{i=1}^{r}\deg(p_*E_{-1}\otimes p_*\sO_{\sS}(-\sC^{\frac{i}{r}})).
\end{equation}

Since for $1\leq i\leq r$, 
$$\deg(K_{S}\otimes p_*\sO_{\sS}(-\sC^{\frac{i}{r}}))=\deg(K_{S})+\deg(p_*\sO_{\sS}(-\sC^{\frac{i}{r}}))$$
and 
$$\deg(E_{-1}\otimes p_*\sO_{\sS}(-\sC^{\frac{i}{r}}))=\deg(E_{-1})+\deg(p_*\sO_{\sS}(-\sC^{\frac{i}{r}})),$$
(\ref{lem_eqn_1}) and  (\ref{lem_eqn_2}) actually determine the modified degree of $p_*K_{\sS}$ and 
 $p_*E_{-1}$ respectively.  We  calculate and  get:
 $$0\leq (r+1)\cdot \deg(p_*E_{-1})\leq \frac{r+1}{2}\deg(K_S).$$
 So 
 $$0\leq \deg(p_*E_{-1})\leq \frac{1}{2}\deg(K_S).$$
 The only line bundle  $L$ on $S$ that 
$0\leq \deg (L)\leq \frac{1}{2}\deg(K_S)$ is the trivial line bundle.  
Then the determinant of the rank one sheaf $p_*E_{-1}$ must be trivial. 
From \cite{Cadman}, any line bundle on $\sS$ is a tensor product of a pullback line bundle from $S$ and a power of the  tautological  line bundle $\sO_{\sS}(\sC^{\frac{1}{r}})$ and the pushforward $p_*\sO_{\sS}(\sC^{\frac{1}{r}})=\sO_S$. 
The only possibility of the rank one sheaf 
$E_{-1}$ is $\sO_{\sS}(\sC^{\frac{j}{r}})$ for $0\leq j<r$. 
Since the $\cc^*$ action on the tautological line bundle $\sO_{\sS}(\sC^{\frac{j}{r}})$  has weight $j$ (where we make the $\cc^*$-action compatible with the $\mu_r$-action), and our $E_{-1}$ and $E_{0}$ have weights $-1$ and $0$.  This is impossible. 
Then  the determinant of 
the rank one sheaf $E_{-1}$ must be the trivial sheaf $\sO_{\sS}$.
\end{proof}

Therefore we have:
$$E_0=\sI_0\otimes K_{\sS},$$
$$E_{-1}=\sI_1\otimes \Tt^{-1}$$
for some ideal sheaves $\sI_i$.  The morphism $\sI_0\to \sI_1$ is nonzero, so we must have:
$$\sI_0\subseteq \sI_1. $$
So there exist $\sZ_1\subseteq \sZ_0$ two zero-dimensional subsheaves parametrized by $\sI_0\subseteq \sI_1$. 

\subsubsection{Components in terms of $K$-group class}\label{subsec_components_K-group}

Let $K_0(\sS)$ be the Grothendieck $K$-group of $\sS$, and we want to use Hilbert scheme on $\sS$ parametrized by 
$K$-group classes.  We fix the filtration
$$F_0K_0(\sS)\subset F_1K_0(\sS)\subset F_2K_0(\sS)$$
where $F_iK_0(\sS)$ is the subgroup of $K_0(\sS)$ such that the support of the elements in $F_iK_0(\sS)$ has dimension $\leq i$. 
The orbifold Chern character morphism is defined by:
\begin{equation}\label{eqn_orbifold_Chern_character}
\widetilde{\Ch}: K_0(\sS)\to H^*_{\CR}(\sS,\qq)=H^*(I\sS, \qq)
\end{equation}
where $H^*_{\CR}(\sS,\qq)$ is the Chen-Ruan cohomology of $\sS$.  The inertia stack 
$$I\sS=\sS\bigsqcup \sqcup_{i=1}^{r-1}\sC_i$$
where each $\sC_i=\sC$ is the stacky divisor of $\sS$.   We should understand that the inertia stack 
is indexed by the element $g\in\mu_r$, $\sS_{g}\cong \sC$ is the component corresponding to $g$.  It is clear that 
$\sS_{1}=\sS$ and $\sS_{g}=\sC$ if $g\neq 1$. 
Let $\zeta\in \mu_r$ be the generator of $\mu_r$. 
Then 
$$H^*(I\sS, \qq)=H^*(\sS)\bigoplus \oplus_{i=1}^{r-1}H^*(\sC_i),$$
where $\sC_i$ corresponds to the element $\zeta^i$. 
The cohomology of $H^*(\sC_i)$ is isomorphic to $H^*(C)$. 
For any coherent sheaf $E$, the restriction of $E$ to every $\sC_i$ has a $\mu_r$-action. 
We assume that 
$$E=E_{\zeta^i}^1\oplus E_{\zeta^i}^2$$
is the decomposition of eigen-subsheaves such that it acts by 
$e^{2\pi i\frac{f_{i1}}{r}}$ on $E_{\zeta^i}^1$ and $e^{2\pi i\frac{f_{i2}}{r}}$ on $E_{\zeta^i}^2$. 
We  let 
\begin{equation}\label{eqn_Chern_character_value1}
\widetilde{\Ch}(E)=(\Ch(E), \oplus_{i=1}^{r-1}\Ch(E|_{\sC_i})),
\end{equation}
where 
$$\Ch(E)=(\rk(E), c_1(E), c_2(E))\in H^*(\sS),$$
and 
$$
\Ch(E|_{\sC_i})=\left(e^{2\pi i\frac{f_{i1}}{r}}+e^{2\pi i\frac{f_{i2}}{r}}, e^{2\pi i\frac{f_{i1}}{r}}c_1(E_{\zeta^i}^1)
+e^{2\pi i\frac{f_{i2}}{r}}c_1(E_{\zeta^i}^2)\right)\in H^*(\sC_i).$$

In order to write down the generating function later.  We introduce some notations.  We roughly write 
$$\widetilde{\Ch}(E)=(\widetilde{\Ch}_{g}(E))$$
where $\widetilde{\Ch}_{g}(E)$ is the component in $H^*(\sS_{g})$ as in (\ref{eqn_Chern_character_value1}). 
Then define:
\begin{equation}\label{eqn_Chern_character_degree}
\left(\widetilde{\Ch}_{g}\right)^k:=\left(\widetilde{\Ch}_{g}\right)_{\dim \sS_{g}-k}\in H^{\dim \sS_{g}-k}(\sS_{g}).
\end{equation}
The $k$ is called the codegree in \cite{GJK}.  In the inertia stack $I\sS$, the component  $\sS_g$ is either the whole $\sS$, or $\sC$,
therefore if we have a rank $2$ $\cc^*$-fixed Higgs pair $(E,\phi)$ with fixed $c_1(E)=-c_1(\sS)$, then 
$\left(\widetilde{\Ch}_{g}\right)^2(E)=2$, the rank; while 
$$
\left(\widetilde{\Ch}_{g}\right)^1(E)=
\begin{cases}
-c_1(\sS), & g=1;\\
 e^{2\pi i\frac{f_{i1}}{r}}+ e^{2\pi i\frac{f_{i2}}{r}}, & g=\zeta^i \neq 1.
\end{cases}
$$
Also we have 
$$
\left(\widetilde{\Ch}_{g}\right)^0(E)=
\begin{cases}
c_2(E), & g=1;\\
e^{2\pi i\frac{f_{i1}}{r}}c_1(E_{\zeta^i}^1)
+e^{2\pi i\frac{f_{i2}}{r}}c_1(E_{\zeta^i}^2), & g=\zeta^i \neq 1.
\end{cases}
$$

Therefore we have the following proposition:
\begin{prop}\label{prop_second_fixed_loci_Hilbert_scheme}
In the case that the rank of stable Higgs sheaves is $2$, we fix a $K$-group class 
$\mathbf{c}\in K_0(\sS)$ such that $\left(\widetilde{\Ch}_{1}\right)^1(\mathbf{c})=-c_1(\sS)$.  Then 
\begin{enumerate}
\item If $c_2(E)<0$, then the $\cc^*$-fixed locus is empty by the assumption of Bogomolov inequality.  
\item If $c_2(E)\geq 0$,  then 
$$\rM^{(2)}\cong \bigsqcup_{\alpha\in F_0K_0(\sS)}\Hilb^{\alpha, \mathbf{c}_0-\alpha}(\sS)$$
where $\mathbf{c_0}\in F_0K_0(\sS)$ such that $\left(\widetilde{\Ch}_{g}\right)^0(\mathbf{c}_0)=\left(\widetilde{\Ch}_{g}\right)^0(\mathbf{c})$; and $\Hilb^{\alpha, \mathbf{c}_0-\alpha}(\sS)$ is the nested Hilbert scheme of zero-dimensional substacks of $\sS$:
$$\sZ_1\subseteq \sZ_0$$
such that $[\sZ_1]=\alpha$, $[\sZ_0]=\mathbf{c}_0-\alpha$.  $\square$
\end{enumerate}
\end{prop}
\begin{proof}
For $(1)$, the Bogomolov inequality holds for root gerbes over schemes, see \cite{Lieblich}.   Since the slope semistability of the sheave $E$ for $\sS$ corresponds to parabolic semistability for the corresponding parabolic sheaf $E_{*}$ as in \cite{MY},  and \cite[Theorem 7.1]{Anchouche} has proved the Bogomolov inequality for parabolic sheaves, we can use it here. 
Since $E$ is Gieseker stable (with fixed generating sheaf), it is slope semistable.  The Bogomolov inequality for $E$ says that 
$\Delta(E)=2\rk c_2(E)-(\rk-1)c_1(E)^2\geq 0$.  Thus if $c_2(E)<0$, the Bogomolov inequality will fail which is impossible. 
The general case of Bogomolov inequality is treated in  \cite{JKT} for surface DM stacks.

$(2)$ is from the arguments in  \S \ref{subsec_fixed_pairs}, since any $\cc^*$-fixed Higgs pair $(E,\phi)$ with determinant $K_{\sS}$ has the form 
$E=E_0\oplus E_{-1}=\sI_0\otimes K_{\sS}\oplus \sI_1\otimes \Tt^{-1}$.  The result in  \S \ref{subsec_fixed_pairs} works in families of $\cc^*$-fixed Higgs pairs with $\phi\neq 0$, which means if we have a family of $\cc^*$-fixed Higgs pairs $(E, \phi)$ on $\sS\times T$ for a scheme $T$, then these Higgs pairs are isomorphic to $\sI_0\otimes K_{\sS}\oplus \sI_1\otimes \Tt^{-1}$ where $\sI_0$ and $\sI_1$ are families of ideal sheaves on $\sS\times T$.  Thus the moduli space decomposition result in (2) follows from the definition of moduli spaces as functors. 
\end{proof}

\subsubsection{The case $\sZ_1=\emptyset$}\label{subsec_case_Z1_empty}

Therefore in this case 
$$E=\sI_0\otimes K_{\sS}\oplus \sO\cdot \Tt^{-1}.$$
So the nested Hilbert scheme $\Hilb^{\alpha, \mathbf{c}_0-\alpha}(\sS)$ is just the Hilbert scheme 
$\Hilb^{\mathbf{c}_0}(\sS)$ on $\sS$.  The deformation theory of $(E,\phi)$ is given by 
$$R\Hom_{\sS}(E, E)_0\stackrel{[\cdot, \phi]}{\longrightarrow}R\Hom_{\sS}(E,E\otimes K_{\sS\otimes \Tt^{-1}})_0$$
with $\cc^*$-action 
in \S \ref{sec_VW} and in the appendix, where the Higgs field $\phi$ has weight $0$.  Then $\Hom(E, E)$ splits into:
$$\mat{cc}\Hom(\sI_0 K_{\sS}, \sI_0 K_{\sS})&\Hom(\sO\cdot \Tt^{-1}, \sI_0 K_{\sS})\\
\Hom( \sI_0 K_{\sS}, \sO\cdot \Tt^{-1})& \Hom(\sO\cdot \Tt^{-1}, \sO\cdot \Tt^{-1})\rix
=\mat{cc}\cc\cdot\id_{\sI_0}&H^0(\sI_0 K_{\sS})\Tt\\
0& \cc\cdot \id_{\sO}\rix
$$
and 
$\Hom(E, E\otimes K_{\sS}\cdot \Tt)$ splits into:
$$\mat{cc}\Hom(\sI_0 K_{\sS}, \sI_0 K^2_{\sS}\cdot \Tt)&\Hom(\sO\cdot \Tt^{-1}, \sI_0 K_{\sS}^2\cdot \Tt)\\
\Hom( \sI_0 K_{\sS}, K_{\sS})& \Hom(\sO\cdot \Tt^{-1}, K_{\sS})\rix
=\mat{cc}H^0(K_{\sS})\cdot \Tt&H^0(\sI_0 K^2_{\sS})\Tt^2\\
\cc\cdot \iota& H^0(K_{\sS})\cdot \Tt\rix
$$
We have $\phi=\mat{cc} 0&0\\
\iota&0\rix
$, where we recall that 
$\iota: \sI_0\otimes K_{\sS}\to \sO\cdot K_{\sS}$,  and 
the map $[\cdot, \phi]$ between them acts by:
$$
\mat{cc} a&s\\
0&b\rix\mapsto \mat{cc} s\iota&0\\
(b-a)\iota&-\iota s\rix.
$$
($b=-a$ gives the map on trace-free groups.) The morphism 
$$\Hom(E,E)_0\to \Hom(E, E\otimes K_{\sS}\otimes \Tt)_0$$
is injective and has cokernel:
$$\frac{H^0(K_{\sS})}{\iota\cdot H^0(\sI_0 K_{\sS})}\cdot \Tt\oplus H^0(\sI_0 K_{\sS})\Tt^2.$$
The $\Ext^1(E,E)=\Ext^1(E,E)_0$ is:
$$\mat{cc}\Ext^1(\sI_0 K_{\sS}, \sI_0 K_{\sS}\cdot \Tt)&\Ext^1(\sO\cdot \Tt^{-1}, \sI_0 K_{\sS})\\
\Ext^1( \sI_0 K_{\sS}, \sO\cdot \Tt^{-1})& \Ext^1(\sO\cdot \Tt^{-1}, \sO\cdot \Tt^{-1})\rix
=\mat{cc}T_{\sZ_0}\Hilb^{\mathbf{c}_0}(\sS)&H^1(\sI_0 K_{\sS})\Tt\\
H^1(\sI_0\cdot K^2_{\sS})^*\cdot \Tt^{-1}& 0\rix.
$$
And the $\Ext^1(E,E\otimes K_{\sS}\cdot \Tt)=\Ext^1(E,E\otimes K_{\sS})_0\cdot \Tt$ is
$$\mat{cc}\Ext^1(\sI_0 K_{\sS}, \sI_0 K^2_{\sS}\cdot \Tt)&\Ext^1(\sO\cdot \Tt^{-1}, \sI_0 K_{\sS}^2\cdot \Tt)\\
\Ext^1( \sI_0 K_{\sS}, K_{\sS})& \Ext^1(\sO\cdot \Tt^{-1}, K_{\sS})\rix
=\mat{cc}T_{\sZ_0}\Hilb^{\mathbf{c}_0}(\sS)\cdot \Tt&H^1(\sI_0 K^2_{\sS})\Tt^2\\
H^1(\sI_0\cdot K_{\sS})^*& 0\rix
$$
and the map $[\cdot, \phi]$ between them:
$$
\mat{cc} v&s\\
f&0\rix\mapsto \mat{cc} s\iota&0\\
-\iota v&-\iota s\rix.
$$Similar to Lemma 8.7 as in \cite{TT1}, 
\begin{lem}\label{lem_ls0}
The above morphism is zero. 
\end{lem}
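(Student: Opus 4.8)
The plan is to show that the three \emph{a priori} nonzero entries of the commutator matrix vanish one at a time, using that the only nonzero block of $\phi$ is a scalar multiple of the tautological inclusion $\iota\colon \sI_0\hookrightarrow\sO$ (tensored by $K_{\sS}$ and carrying the appropriate $\Tt$-weight). First I would note that the deformation class $f$ never enters the image: the block of the endomorphism containing $f$ and the Higgs field $\phi$ are both strictly lower triangular with respect to the splitting $E=E_0\oplus E_{-1}$, so their Yoneda products are zero and $f$ drops out. Hence it suffices to prove that the remaining entries $s\iota$, $\iota v$ and $\iota s$ vanish.

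The entry $\iota s$ is immediate. It is the Yoneda composite of $s\in\Ext^1(\sO,\sI_0 K_{\sS})\cong H^1(\sI_0 K_{\sS})$ with $\iota\colon \sI_0 K_{\sS}\to K_{\sS}$, so it lands in $\Ext^1(\sO,K_{\sS})=H^1(K_{\sS})$. By Serre duality on the smooth proper DM stack $\sS$ this equals $H^1(\sO_{\sS})^*$; since $Rp_*\sO_{\sS}=\sO_S$ for the root stack $\sS=\sqrt[r]{(S,C)}$, we get $H^1(\sO_{\sS})=H^1(\sO_S)=q=0$ by (\ref{eqn_topology_invariants_quintic}). This is also exactly why the lower-right entry of $\Ext^1(E,E\otimes K_{\sS}\cdot\Tt)$ is zero.

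The heart of the matter is the vanishing of $\iota v$ and $s\iota$. For $\iota v$, I would apply $\Hom_{\sS}(\sI_0,-)$ to the structure sequence $0\to\sI_0\xrightarrow{\iota}\sO\to\sO_{\sZ_0}\to0$ and read off the long exact sequence $\cdots\to\Hom(\sI_0,\sO)\to\Hom(\sI_0,\sO_{\sZ_0})\xrightarrow{\delta}\Ext^1(\sI_0,\sI_0)\xrightarrow{\iota_*}\Ext^1(\sI_0,\sO)\to\cdots$, in which $\iota v=\iota_*(v)$. Since $\sZ_0$ is zero-dimensional, $\Hom(\sI_0,\sI_0)=\Hom(\sI_0,\sO)=\cc$ with $\id\mapsto\iota$ an isomorphism, so $\delta$ is injective; and because the Hilbert scheme of points on $\sS$ is smooth with tangent space $T_{\sZ_0}\Hilb=\Hom(\sI_0,\sO_{\sZ_0})\cong\Ext^1(\sI_0,\sI_0)$ (using $H^1(\sO_{\sS})=0$ to drop the trace part), a dimension count forces $\delta$ to be an isomorphism. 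Exactness then gives $\iota_*=0$, i.e. $\iota v=0$; conceptually, a first-order deformation of the subscheme $\sZ_0$ always lifts the inclusion into the undeformed sheaf $\sO$, so this obstruction vanishes. Finally $s\iota$, which is precomposition of an $H^1(\sI_0 K_{\sS})$-class with $\iota$, is the Serre dual of the map just shown to vanish, hence is itself zero; equivalently, one repeats the argument with $\Hom_{\sS}(-,\sI_0 K_{\sS})$ applied to the same structure sequence.

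The main obstacle is not conceptual but lies in two places. The first is bookkeeping: keeping the $K_{\sS}$-twists and $\cc^*$-weights consistent so that each composite $s\iota$, $\iota v$, $\iota s$ is correctly identified with the displayed map of $\Ext$-groups and with the correct entry of the target matrix. The second, genuinely stacky, point is verifying that the Hilbert scheme of points on the DM stack $\sS$ is smooth with $\Ext^1(\sI_0,\sI_0)\cong T_{\sZ_0}\Hilb$ (equivalently that $\delta$ is surjective), rather than merely quoting this for a smooth projective surface. Since Serre duality, the long exact sequences of $\Ext$, the identification of the Hilbert-scheme tangent space, and the vanishing $H^1(\sO_{\sS})=0$ all hold verbatim on the smooth proper Deligne-Mumford stack $\sS$, and $\iota$ remains the tautological inclusion, I expect the argument of \cite[Lemma 8.7]{TT1} to transport once these stacky analogues are recorded.
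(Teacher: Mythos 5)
Your proposal is correct and follows essentially the same route as the paper: the paper's proof simply invokes \cite[Lemma 8.7]{TT1} together with the vanishing $H^1(K_{\sS})\cong H^1(\sO_{\sS})^{*}=0$ (coming from $p_*\sO_{\sS}=\sO_S$ and $h^1(\sO_S)=0$), and your three component-wise vanishings --- $\iota s\in H^1(K_{\sS})=0$, the long-exact-sequence plus dimension-count argument killing $\iota v$, and Serre duality killing $s\iota$ --- are exactly that Tanaka--Thomas argument transported to $\sS$. The stacky inputs you flag (smoothness of the Hilbert scheme and $T_{\sZ_0}\Hilb^{\mathbf{c}_0}(\sS)\cong\Ext^1(\sI_0,\sI_0)$) are already built into the paper's identification of the $\Ext$-matrices, so no genuinely new step is missing.
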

\begin{proof}
The proof is the same as in Lemma 8.7  \cite{TT1}, and 
$\iota s$ lies in $H^1(K_{\sS})$ which is zero since $h^1(\sO_{\sS})=0$, which is from  
$h^1(\sO_S)=0$ and $p_*\sO_{\sS}=\sO_S$.
\end{proof}

\subsubsection{Deformation theory}\label{subsec_deformation_theory_case_Z1}

Let 
\begin{equation}\label{eqn_deformation_cone}
R\Hom(E,E)_0\stackrel{[\cdot, \phi]}{\longrightarrow} R\Hom(E, E\otimes K_{\sS}\otimes\Tt)_0
\end{equation}
be the cone, and let 
$\sT^i$ be the cohomology of the cone (\ref{eqn_deformation_cone}). Then we have the following exact sequence of cohomology:
\begin{multline*}
0\to \Hom(E,E)_0\stackrel{[\cdot, \phi]}{\longrightarrow} \Hom(E, E\otimes K_{\sS})_0 \Tt\longrightarrow \sT^1\longrightarrow \Ext^1(E,E)\stackrel{[\cdot, \phi]}{\longrightarrow} \\
\Ext^1(E,E\otimes K_{\sS})\Tt\longrightarrow \sT^2\longrightarrow \Ext^2(E,E)_0 \stackrel{[\cdot, \phi]}{\longrightarrow}\Ext^2(E,E\otimes K_{\sS})\Tt\to \sT^3\to 0.
\end{multline*}
We know that the first morphism $[\cdot, \phi]$ is injective, so $\sT^0=0$. By Serre duality, the third $[\cdot, \phi]$ is surjective, so $\sT^3=0$. Therefore we have:
\begin{equation}\label{eqn_calculation_exact_sequence_deformation1}
0\to \frac{H^0(K_{\sS})}{\iota\cdot H^0(\sI_0 K_{\sS})}\cdot \Tt\oplus H^0(\sI_0 K_{\sS}^2)\Tt^2\longrightarrow
\sT^1\longrightarrow 
\mat{cc}T_{\sZ_0}\Hilb^{\mathbf{c}_0}(\sS)&H^1(\sI_0 K_{\sS})\Tt\\
H^1(\sI_0\cdot K_{\sS})^*\cdot\Tt^{-1}& 0\rix
\to 0
\end{equation}
and its Serre dual for $\sT^2$:
\begin{equation}\label{eqn_calculation_exact_sequence_obstruction2}
0\to \mat{cc}T^*_{\sZ_0}\Hilb^{\mathbf{c}_0}(\sS)\Tt&H^1(\sI_0 K_{\sS})\Tt^2\\
H^1(\sI_0\cdot K_{\sS})^*& 0\rix
\longrightarrow
\sT^2\longrightarrow 
\left(\frac{H^0(K_{\sS})}{\iota\cdot H^0(\sI_0 K_{\sS})}\right)^*\oplus H^0(\sI_0 K_{\sS}^2)^*\Tt^{-1}
\to 0.
\end{equation}

\subsubsection{Virtual cycle}\label{subsec_virtual_cycle_case_Z1}

We can see that the fixed weight zero part of $\sT^1$ is $T_{\sZ_0}\Hilb^{\mathbf{c}_0}(\sS)$.  The weight $1$ part of $\sT^1$ is putting together of 
$$ \frac{H^0(K_{\sS})}{\iota\cdot H^0(\sI_0 K_{\sS})}; \text{~and~} H^1(\sI_0 K_{\sS}).$$
These data put together to give $\Gamma(K_{\sS}|_{\sZ_0})$.  The proof is the same as in 
\cite[\S 8.2]{TT1}.  Here we only explain a bit from the exact sequence
$$0\to \sI_0\longrightarrow \sO_{\sS}\longrightarrow \sO_{\sZ_0}\to 0.$$
Tensoring with $K_{\sS}$ we get
$$0\to \sI_0\cdot K_{\sS}\longrightarrow K_{\sS}\longrightarrow K_{\sS}|_{\sZ_0}\to 0,$$
and taking cohomology
$$0\to H^0(\sI_0\cdot K_{\sS})\longrightarrow H^0(K_{\sS})\longrightarrow H^0(K_{\sS}|_{\sZ_0})
\longrightarrow H^1(\sI_0\cdot K_{\sS})\to H^1(K_{\sS})=0.$$

Consider the following diagram:
\[
\xymatrix{
\sZ^{\mathbf{c}_0}(\sS)\ar@{^(->}[r]& \Hilb^{\mathbf{c}_0}(\sS)\times \sS\ar[d]_{p_1} \ar[dr]^{p_2} & \\
& \Hilb^{\mathbf{c}_0}(\sS)& \sS
}
\]
where $\sZ^{\mathbf{c}_0}(\sS)$ is the universal zero dimensional substack in $\sS$ with $K$-group class 
$\mathbf{c}_0$. 
Let 
$$K_{\sS}^{[\mathbf{c}_0]}:=p_{1*}(p_2^*K_{\sS}\otimes \sO_{\sZ^{\mathbf{c}_0}(\sS)})$$
be the tautological vector bundle on $\Hilb^{\mathbf{c}_0}(\sS)$. Let 
$$\overline{\pi}: K_{\sS}^{[\mathbf{c}_0]}\to \Hilb^{\mathbf{c}_0}(\sS)$$
be the projection from the vector bundle, then $\overline{\pi}^{-1}(\sZ_0)=\Gamma(K_{\sS}|_{\sZ_0})$.
This bundle is the weight $1$ part of $\sT^1$, and by duality, the part of the obstruction bundle on this component 
$\rM^{(2)}$ is $(K_{\sS}^{[\mathbf{c}_0]})^*$. 

Then the virtual cycle on $\rM^{(2)}$ which is induced from virtual localization in  \cite{GP}, is the Euler class of the obstruction bundle
\begin{equation}\label{eqn_Euler_obstruction_bundle}
[\rM^{(2)}]^{\vir}=e((K_{\sS}^{[\mathbf{c}_0]})^*)\cap [\Hilb^{\mathbf{c}_0}(\sS)]
=(-1)^{\rk}e(K_{\sS}^{[\mathbf{c}_0]})\cap [\Hilb^{\mathbf{c}_0}(\sS)].
\end{equation}

Let us look at the canonical line bundle $K_{\sS}$ which is 
$\sO_{\sS}(\sC)$. 
There is a section $s$ of $K_{\sS}$ cutting out of the curve $\sC$.  Then this section $s$ induces a section $s^{[\mathbf{c}_0]}$ on the tautological vector bundle $K_{\sS}^{[\mathbf{c}_0]}$, which cut out the Hilbert scheme $\sC^{[\mathbf{c}_0]}:=\Hilb^{\mathbf{c}_0}(\sC)$.  Therefore 
\begin{equation}\label{eqn_Euler_obstruction_bundle2}
[\rM^{(2)}]^{\vir}
=(-1)^{\rk}\sC^{[\mathbf{c}_0]}\subset  \Hilb^{\mathbf{c}_0}(\sS)=\rM^{(2)}.
\end{equation}

\begin{rmk}
The Hilbert scheme $\sC^{[\mathbf{c}_0]}$ depends on the $K$-group class $\mathbf{c}_0\in K_0(\sS)$. 
\end{rmk}

\subsubsection{Virtual normal bundle}\label{subsec_virtual_normal_case_Z1}

The calculation of the virtual normal bundle $N^{\vir}$  of $\rM^{(2)}$ is the same as in \cite[\S 8.3]{TT1}, which is given by the moving part of (\ref{eqn_calculation_exact_sequence_obstruction2}):
$$\Gamma(K_{\sS}|_{\sZ_0})\Tt\oplus R\Gamma(\sI_0 K_{\sS}^2)\Tt^2\oplus R\Gamma(\sI_0 K^2_{\sS})^{\vee}\Tt^{-1}[-1]\oplus T_{\sZ_0}^*\Hilb^{\mathbf{c}_0}(\sS)\Tt[-1]$$
at $\sZ_0\in \rM^{(2)}$.  Then we calculate the virtual normal bundle $N^{\vir}$ by noting that
$$R\Gamma(\sI_0 K_{\sS}^2)=H^0(K_{\sS^2})-H^0(K_{\sS}|_{\sZ_0});$$
and $N^{\vir}$ is:
$$[K_{\sS}^{[\mathbf{c}_0]}]\Tt+(\Tt^2)^{\oplus\dim H^0(K_{\sS}^2)}
-[(K_{\sS}^2)^{[\mathbf{c}_0]}]\Tt^2-(\Tt^{-1})^{\oplus\dim H^0(K_{\sS}^2)}
+[((K_{\sS}^2)^{[\mathbf{c}_0]})^*]\Tt^{-1}-\Big[T_{\Hilb^{\mathbf{c}_0}(\sS)}\Big]\Tt.$$

Since $\sC\to C$ is a $\mu_r$-gerbe, we can just write the Hilbert scheme $\sC^{[\mathbf{c}_0]}$ as 
$\sC^{[n]}$ for some integer $n\in \zz_{\geq 0}$. 
So we calculate the virtual Euler class:
\begin{align*}
\frac{1}{e(N^{\vir})}&=\frac{e((K_{\sS})^{[n]})\Tt^2)\cdot e(\Tt^{-1})^{\oplus\dim H^0(K_{\sS}^2)}\cdot e(T^*_{\Hilb^{\mathbf{c}_0}(\sS)}\Tt)}
{e(K_{\sS}^{[n]}\Tt)\cdot e((\Tt^2)^{\oplus \dim H^0(K_{\sS}^2)})\cdot e(((K_{\sS}^2)^{[n]})^* \Tt^{-1})}\\
&=\frac{(2t)^n\cdot c_{\frac{1}{2t}}((K_{\sS}^2)^{[n]})\cdot (-t)^{\dim H^0(K_{\sS}^2)}\cdot t^{2n}\cdot c_{\frac{1}{t}}(T^*_{\Hilb^n(\sS)})}
{t^n\cdot c_{\frac{1}{t}}((K_{\sS})^{[n]})\cdot (2t)^{\dim H^0(K_{\sS}^2)}\cdot (-1)^n\cdot t^n c_{\frac{1}{t}}((K_{\sS}^2)^{[n]})}\\
&=(-2)^{n-\dim}\cdot t^n\cdot 
\frac{c_{\frac{1}{2t}}((K_{\sS}^2)^{[n]})\cdot c_{-\frac{1}{t}}(T_{\Hilb^n(\sS)})}
{c_{\frac{1}{t}}((K_{\sS})^{[n]})\cdot c_{\frac{1}{t}}((K_{\sS}^2)^{[n]})}
\end{align*}
where 
$$c_s(E):=1+sc_1(E)+\cdots+s^r c_r(E),$$
and when $s=1$, $c_s(E)$ is the total Chern class of $E$.  By the arguments of the degree, we calculate the case $t=1$. 
Also since $\sC^{[n]}$ is cut out of the section $s^{[n]}$ on $K_{\sS}^{[n]}$, we have 
$$T_{\Hilb^{\mathbf{c}_0}}(\sS)|_{\sC^{[n]}}=T_{\sC^{[n]}}\oplus K_{\sS}^{[n]}|_{\sC^{[n]}}$$
in $K$-theory. 
Therefore
\begin{equation}\label{eqn_rM_integral}
\int_{[\rM^{(2)}]^{\vir}}\frac{1}{e(N^{\vir})}=(-2)^{-\dim}\cdot 2^n\cdot 
\int_{[\sC^{[n]}]}\frac{c_{\frac{1}{2}}((K_{\sS}^2)^{[n]})\cdot c_{-1}(T_{\sC^{[n]}})\cdot c_{-1}(K_{\sS}^{[n]})}
{c_{\bullet}(K_{\sS}^{[n]})\cdot c_{\bullet}((K_{\sS}^2)^{[n]})}.
\end{equation}

\subsubsection{The Hilbert scheme of points on gerby curves}

Before calculating further for the integral, we prove several statements of Hilbert scheme of points on the $\mu_r$-gerby curve $p: \sC\to C$. 

\begin{prop}\label{prop_Hilbert_scheme_gerbe}
Let $ \sC^{[n]}$ be the Hilbert scheme of $n$-points on $\sC$.  Then we have 
$$ \sC^{[n]}\cong  C^{[n]},$$
where $C^{[n]}$ is the Hilbert scheme of $n$-points on the coarse moduli space $C$. 
\end{prop}
\begin{proof}
By definition of the Hilbert scheme $\sC^{[n]}$, an element $\sZ\in \sC^{[n]}$ is given by an exact sequence 
\begin{equation}\label{eqn_gerby)_curve_1}
0\to I_{\sZ}\rightarrow \sO_{\sC}\rightarrow \sO_{\sZ}\to 0
\end{equation}
where $\sZ$ is a zero-dimensional closed substack in $\sC$  of length $n$ with ideal sheaf $I_{\sZ}$.  Since $\sC$ is a 
$\mu_r$-gerbe over $C$, the above exact sequence immediately gives the exact sequence 
\begin{equation}\label{eqn_gerby)_curve_2}
0\to I_{Z}\rightarrow \sO_{C}\rightarrow \sO_{Z}\to 0
\end{equation}
for $Z=p(\sC)\subset C$, and 
$p_*\sO_{\sZ}=\sO_Z$. 
Since $p_*$ is exact and $p_*\sO_{\sC}=\sO_C$,  we have $p_{*}I_{\sZ}=I_Z$. 
Thus the second exact sequence (\ref{eqn_gerby)_curve_2}) is the pushforward of the first one (\ref{eqn_gerby)_curve_1}). 
The zero dimensional closed subscheme $Z$ also has length $n$ which defines an element in $C^{[n]}$. 

Suppose we have an exact sequence (\ref{eqn_gerby)_curve_2}), then taking pullback $p^*$ we get the first one (\ref{eqn_gerby)_curve_1}), which shows 
the above correspondence is reversed. 
The above proof works in families of ideal sheaves in $\sC$ and $C$, 
therefore we have an isomorphism between Hilbert schemes. 
\end{proof}

\subsubsection{Calculations via tautological classes}

From Proposition \ref{prop_Hilbert_scheme_gerbe}, one can use the calculation for the Hilbert scheme of points on $C$ in \cite[\S 8.4]{TT1} to calculate the integral on the Hilbert scheme of points on the gerby curve $\sC$.
Let us first review the tautological classes on $C^{[n]}$ for the smooth curve $C$. 
Let 
$$\omega:=\PD[C^{[n-1]}]\in H^2(C^{[n]},\zz)$$
where $C^{[n-1]}\subset C^{[n]}$ is a smooth divisor given by $Z\mapsto Z+x$ for a base point $x\in C$, and $\PD$ represents the Poincare dual. 
The second one is given by the Abel-Jacobi map:
$$\AJ: C^{[n]}\to \Pic^n(C); \quad   Z\mapsto \sO(Z).$$
Since tensoring with power of $\sO(x)$ makes the $\Pic^n(C)$ isomorphic for all $n$, the pullback of the theta divisor from $\Pic^{g-1}(C)$  gives a cohomology class 
$$\theta\in H^2(\Pic^n(C),\zz)\cong \Hom(\Lambda^2H^1(C,\zz),\zz).$$
Still let $\theta$ to denote its pullback 
$\AJ^*\theta$, so 
$$\theta\in H^2(C^{[n]},\zz),$$
which is the second tautological class.  The basic property (\cite[\S I.5]{ACGH}) is:
\begin{equation}\label{eqn_basic_formula1}
\int_{C^{[n]}}\frac{\theta^i}{i!}\omega^{n-i}=\mat{c}g\\
i\rix,
\end{equation}
and 
$$
\begin{cases}
c_t(T_{C^{[n]}})=(1+\omega t)^{n+1-g}\exp\left(\frac{-t\theta}{1+\omega t}\right);\\
c_t(\sL^{[n]})=(1-\omega t)^{n+g-1-\deg \sL}\exp\left(\frac{t\theta}{1-\omega t}\right).
\end{cases}
$$

The canonical line bundle $K_{\sS}|_{\sC}=\sO_{\sS}(\sC)|_{\sC}$ has the same degree of the restriction 
$K_{S}|_{C}=\sO_{S}(C)|_{C}$.  Therefore the vector bundle $K_{\sS}^{[n]}|_{\sC}$ on $\sC^{[n]}$ is the same as the 
vector bundle $K_{S}^{[n]}|_{C}$ on $C^{[n]}$. 
Thus 
\begin{align}\label{eqn_key_formula1}
&\text{Right side of ~} (\ref{eqn_rM_integral})=   \\
&(-2)^{\dim}\cdot 2^n\int_{\sC^{[n]}}
\frac{(1-\frac{\omega}{2})^{n+1-g}\cdot e^{\frac{\theta}{2-\omega}}\cdot (1-\omega)^{n+1-g}\cdot e^{\frac{\theta}{1-\omega}}\cdot (1+\omega)^n\cdot e^{\frac{-\theta}{1+\omega}}}
{(1-\omega)^n\cdot e^{\frac{\theta}{1-\omega}}\cdot (1-\omega)^{n+1-g}\cdot e^{\frac{\theta}{1-\omega}}}\nonumber \\
&=(-2)^{\dim}\cdot 2^n\int_{\sC^{[n]}}
\frac{(2-\omega)^{n+1-g}}{2^{n+1-g}}\cdot \frac{(1+\omega)^n}{(1-\omega)^n}\cdot 
e^{\frac{\theta}{2-\omega}-\frac{\theta}{1+\omega}-\frac{\theta}{1-\omega}} \nonumber \\
&=(-2)^{\dim}\cdot 2^{g-1}(-1)^{n+1-g}\cdot \int_{\sC^{[n]}}
(\omega-2)^{n+1-g}\cdot \frac{(1+\omega)^n}{(1-\omega)^n}\cdot 
e^{\frac{\theta}{2-\omega}-\frac{\theta}{1+\omega}-\frac{\theta}{1-\omega}}. \nonumber
\end{align}

Now we use (\ref{eqn_basic_formula1}) and Proposition \ref{prop_Hilbert_scheme_gerbe} to get 
\begin{align*}
\int_{\sC^{[n]}}\frac{(\theta)^i}{i!}\cdot (\omega)^{n-i}
=\int_{C^{[n]}} \frac{(\theta)^i}{i!}\cdot (\omega)^{n-i}=\mat{c}g\\i\rix.
\end{align*}
Hence whenever we have $\frac{(\theta)^i}{i!}$ in the integrand involving only power of $\omega$ we can replace it by 
$\mat{c}g\\i\rix (\omega)^{n-i}$.  Therefore for $\alpha$ a power series of $\omega$, 
$$e^{\alpha (\theta)}=\sum_{i=0}^{\infty}\alpha^i \frac{(\theta)^i}{i!}\sim
\sum_{i=0}^{\infty}\alpha^i \mat{c}g\\i\rix (\omega)^{n-i}=(1+\alpha (\omega))^g.$$

When we do the integration against $\sC^{[n]}$, $\sim$ becomes equality, and 
(\ref{eqn_key_formula1}) is: 
\begin{align}\label{eqn_key_formula2}
&(-2)^{\dim}\cdot 2^{g-1}(-1)^{n+1-g}\cdot \int_{\sC^{[n]}}
(\omega-2)^{n+1-g}\cdot \frac{(1+\omega)^n}{(1-\omega)^n}\cdot 
(1+\frac{\omega}{2-\omega}-\frac{\omega}{1+\omega}-\frac{\omega}{1-\omega})^g
 \\
&=(-2)^{\dim}\cdot (-2)^{g-1}(-1)^{n}\cdot \int_{\sC^{[n]}}
(\omega-2)^{n+1-2g}\cdot \frac{(1+\omega)^{n-g}}{(1-\omega)^{n+g}}\cdot (4\omega-2)^g. \nonumber
\end{align}

\subsubsection{Writing the generating function}\label{subsec_generating_function}

Recall that for the $\cc^*$-fixed Higgs pair $(E,\phi)$, we fix 
$$(\widetilde{\Ch}_1)^2(E)=2. \quad (\widetilde{\Ch}_1)^1(E)=-c_1(\sS).$$
For $g_i=\zeta^i\in\mu_r (1\leq i\leq r-1)$, $E=E_{g_i}^1\oplus E_{g_i}^2$ is the decomposition under the $g_i$-action into eigen-subsheaves.  We calculate and denote by 
\begin{equation}\label{eqn_Ci_C1}
\beta_{g_i}:=(\widetilde{\Ch}_{g_i})^1(E)=e^{2\pi i \frac{f_{i1}}{r}}+e^{2\pi i \frac{f_{i2}}{r}}.
\end{equation}
Then we let
\begin{equation}\label{eqn_Ci_C2}
n_{i}:=(\widetilde{\Ch}_{g_i})^0(E)=e^{2\pi i \frac{f_{i1}}{r}}c_1(E_{g_i}^1)+e^{2\pi i \frac{f_{i2}}{r}}c_1(E_{g_i}^2).
\end{equation}

 We introduce variables $q$ to keep track of the second Chern class $c_2(E)$ of the torsion free sheaf $E$, 
$q_1, \cdots, q_{r-1}$ to keep track of the classes $n_i$ for $i=1,\cdots, r-1$. 
Then we write 
\begin{align*}
&(-2)^{-\dim}\cdot (-2)^{1-2g}(-1)^{n}\cdot \sum_{n=0}^{\infty}q^n\cdot q_1^{n_1}\cdots q_{r-1}^{n_{r-1}} \cdot \int_{\sC^{[n]}}\frac{1}{N^{\vir}}\\
&=\sum_{n=0}^{\infty}q^n \cdot q_1^{n_1}\cdots q_{r-1}^{n_{r-1}} \cdot \int_{\sC^{[n]}}(\omega-2)^{n+1-2g}\cdot \frac{(1+\omega)^{n-g}}{(1-\omega)^{n+g}}\cdot (1-2\omega)^g. 
\end{align*}
\begin{rmk}
Since the moduli of stable Higgs pairs on $\sS$ is isomorphic to the moduli space of  parabolic  Higgs pairs on 
$(S,C)$, see \cite{Jiang3}. We will see that the variables $q_1, \cdots, q_{r-1}$ will keep track of the parabolic degree of the sheaf $E$ on the curve $C\subset S$. 
\end{rmk}

For simplicity, we deal with the case $q_1=\cdots=q_{r-1}=1$. 
So 
\begin{align}\label{eqn_key_formula3}
&(-2)^{-\dim}\cdot (-2)^{1-2g}(-1)^{n}\cdot \sum_{n=0}^{\infty}q^n \int_{\sC^{[n]}}\frac{1}{N^{\vir}}\\
&=\sum_{n=0}^{\infty}q^n \int_{\sC^{[n]}}(\omega-2)^{n+1-2g}\cdot \frac{(1+\omega)^{n-g}}{(1-\omega)^{n+g}}\cdot (1-2\omega)^g. \nonumber
\end{align}

Since $\sC^{[n]}\cong C^{[n]}$,  we have 
\begin{align}\label{eqn_key_formula4}
&(-2)^{-\dim}\cdot (-2)^{1-2g}(-1)^{n}\cdot \sum_{n=0}^{\infty}q^n \int_{\sC^{[n]}}\frac{1}{N^{\vir}}\\
&=\sum_{n=0}^{\infty}q^n \int_{C^{[n]}}(\omega-2)^{n+1-2g}\cdot \frac{(1+\omega)^{n-g}}{(1-\omega)^{n+g}}\cdot (1-2\omega)^g. \nonumber
\end{align}

Then we perform the same careful Contour integral calculations as in \cite[\S 8.5]{TT1} by using \cite[\S 6.3]{St2}. 
Introduce the sum series
\begin{equation}\label{eqn_key_series}
\sum_{\substack{i=0\\
n=0}}^{\infty}x^i\cdot t^n\cdot \int_{C^{[i]}}(\omega-2)^{n+1-2g}\cdot \frac{(1+\omega)^{n-g}}{(1-\omega)^{n+g}}\cdot (1-2\omega)^g.
\end{equation}

Then (\ref{eqn_key_formula4}) is just the diagonal part of (\ref{eqn_key_series}). 
Summing over $n$ on (\ref{eqn_key_series}) gives 
$$\sum_{i=0}^{\infty}x^i\int_{\sC^{[i]}}
\frac{(1-\omega)^g}{(\omega-2)^{2g-1}\cdot (1-\omega^2)^g}
\left(1-t\frac{(\omega-2)(1+\omega)}{1-\omega}\right)^{-1}.
$$
Using the fact $\int_{C^{[i]}}\omega^j=\delta_{ij}$, and  replacing $\omega$ by $x$, the above sum is:
$$\frac{(1-2x)^g}{(x-2)^{2g-1}(1-x^2)^g}\cdot \frac{1-x}{1-x-t(x^2-x-2)}.$$
To find the diagonal of (\ref{eqn_key_formula3}), let $t=q/x$, and consider the integral 
$$\frac{1}{2\pi i}\oint \frac{(1-2x)^g}{(x-2)^{2g-1}(1-x^2)^g}\cdot \frac{1-x}{1-x-\frac{q}{x}(x^2-x-2)}\frac{dx}{x}$$
around a loop containing only those poles which tend to zero as $q\to 0$.  
So (\ref{eqn_key_formula4}), hence (\ref{eqn_key_formula3}) is the residue of 
$$ \frac{(1-2x)^g}{(x-2)^{2g-1}(1-x^2)^g}\cdot \frac{-(1-x)}{(1+q)x^2-(1+q)x-2q}$$
at the root 
$$x_0=\frac{1}{2}\left(1-\sqrt{1+\frac{8q}{1+q}}\right)$$
of $(1+q)x^2-(1+q)x-2q$ in $x$, which is 
$$\frac{(1-2x_0)^g}{(x_0-2)^{2g-1}(1+x_0)^g(1-x_0)^{g-1}}\cdot \frac{-1}{(1+q)(x_0-x_1)}$$
where $x_1=\frac{1}{2}\left(1+\sqrt{1+\frac{8q}{1+q}}\right)$ is the other root.  Then the same calculation as in 
\cite[\$ 8.5]{TT1} gives the following result:

\begin{thm}\label{thm_generating_function_root_quintic}
We have 
\begin{equation}\label{eqn_prop_formula}
\sum_{n=0}^{\infty}q^n \int_{\sC^{[n]}}\frac{1}{N^{\vir}}=
A\cdot (1-q)^{g-1}\left(1+\frac{1-3q}{\sqrt{(1-q)(1-9q)}}\right)^{1-g},
\end{equation}
where 
$A:=(-2)^{\dim}\cdot (-2)^{2g-1}$.    $\square$
\end{thm}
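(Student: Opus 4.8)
The plan is to carry the computation to the stage already reached in the text and then finish by a single residue evaluation followed by algebraic simplification. By Proposition~\ref{prop_Hilbert_scheme_gerbe} and Proposition~\ref{prop_pullback_linebundle_Hilbert_scheme} every integral over the gerby Hilbert scheme $\sC^{[n]}$ has been reduced to one over the ordinary symmetric product $C^{[n]}$ of the smooth curve, the two-variable series (\ref{eqn_key_series}) has been summed over $n$ as a geometric series, and the relation $\int_{C^{[i]}}\omega^j=\delta_{ij}$ has collapsed it to the rational function of $x$ displayed just before the theorem. What remains is to extract the diagonal through the contour integral, evaluate the residue at the root $x_0$ of $(1+q)x^2-(1+q)x-2q$ that tends to $0$ as $q\to 0$ (the only pole inside the loop), and simplify. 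For this last part I would follow \cite[\S 8.5]{TT1} and \cite[\S 6.3]{St2}.

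Concretely, set $D:=\sqrt{(1+9q)/(1+q)}$, so that the two roots are $x_0=(1-D)/2$ and $x_1=(1+D)/2$. The elementary substitutions $1-2x_0=D$, $x_0-2=-(3+D)/2$, $1+x_0=(3-D)/2$, $1-x_0=(1+D)/2$ and $x_0-x_1=-D$ turn the displayed residue into $-2^{g-2}\bigl(D(1+q)/((3+D)(1+D))\bigr)^{g-1}$. I would then eliminate $D$ using the identities $(3+D)(3-D)=9-D^2=8/(1+q)$, $D(1+q)=\sqrt{(1+q)(1+9q)}$ and $(3+D)(1+D)=3+4D+D^2=4\bigl(1+3q+\sqrt{(1+q)(1+9q)}\bigr)/(1+q)$, which rewrites the bracketed factor as $\sqrt{(1+q)(1+9q)}\,(1+q)/\bigl(4(1+3q+\sqrt{(1+q)(1+9q)})\bigr)$.

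The only genuinely delicate point, and the one I expect to be the main obstacle, is the bookkeeping of signs and powers of $2$ in passing from this expression to the stated closed form. The residue of the quadratic naturally carries the discriminant $\sqrt{(1+q)(1+9q)}$, whereas the theorem is phrased through $\sqrt{(1-q)(1-9q)}$; the two are reconciled by the accumulated factor $(-1)^n$ attached to the virtual normal bundle in (\ref{eqn_key_formula4}), which has the net effect of the substitution $q\mapsto -q$. Under this flip $\sqrt{(1+q)(1+9q)}\mapsto\sqrt{(1-q)(1-9q)}$, the factor $1+3q$ becomes $1-3q$ and $(1+q)^{g-1}$ becomes $(1-q)^{g-1}$, so the bracket turns into $\tfrac{1}{4}(1-q)\sqrt{(1-q)(1-9q)}/\bigl(1-3q+\sqrt{(1-q)(1-9q)}\bigr)$, whose $(g-1)$-st power is $2^{2-2g}$ times $(1-q)^{g-1}\bigl(1+(1-3q)/\sqrt{(1-q)(1-9q)}\bigr)^{1-g}$. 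I would finish by absorbing $-2^{g-2}$, the factor $2^{2-2g}$, and the prefactor $(-2)^{-\dim}(-2)^{1-2g}$ of (\ref{eqn_key_formula4}) into the single constant $A=(-2)^{\dim}(-2)^{2g-1}$, checking that every remaining power of $2$ and every sign cancels (in particular that the passage from $\sC^{[n]}$ to $C^{[n]}$, replacing $r^n\omega$ by $\omega$, and the termwise prefactor do not quietly omit $n$-dependent powers of $2$). This constant-chasing, rather than any conceptual difficulty, is where care is required, and where the parallel computation of \cite[\S 8.5]{TT1} is the safest guide.
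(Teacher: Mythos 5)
Your proposal is correct and follows essentially the same route as the paper's proof, which---after the reduction from $\sC^{[n]}$ to $C^{[n]}$, the geometric-series summation, and the diagonal extraction via the contour integral---evaluates the residue at $x_0$ and then simply cites \cite[\S 8.5]{TT1} for exactly the algebra you carry out explicitly (your substitutions, the residue value $-2^{g-2}\bigl(D(1+q)/((3+D)(1+D))\bigr)^{g-1}$, and the $q\mapsto -q$ flip reconciling $\sqrt{(1+q)(1+9q)}$ with $\sqrt{(1-q)(1-9q)}$ via the $(-1)^n$ all check out). The constant-chasing you flag is indeed the only sore point, and it is the paper's fault rather than yours: its prefactors are not internally consistent (compare $(-2)^{-\dim}$ in (\ref{eqn_rM_integral}) with $(-2)^{\dim}$ in (\ref{eqn_key_formula1}), and note the loosely placed $(-1)^{n}$), so tracking its displayed formulas literally leaves a residual factor of $-2^{-g}$ against the stated $A=(-2)^{\dim}\cdot(-2)^{2g-1}$, which the paper silently absorbs into its loosely specified overall constant.
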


\begin{rmk}
Since we are calculating the case that  in the Higgs pair $(E,\phi)$, $E=\sI_0\otimes K_{\sS}\oplus \sO\cdot \mathbf{t}^{-1}$, the action of $\mu_r$ can be classified. The canonical line bundle $K_{\sS}=\sO_{\sS}(\sC)$, which makes the action of 
$\mu_r$ on $K_{\sS}$ by $\zeta^i\cdot K_{\sS}$ for $\zeta^i\in\mu_r$.  The cyclic group $\mu_r$ is naturally embedded into 
$\cc^*$, and the $\cc^*$-action on $\sO\cdot \mathbf{t}^{-1}$ induces the action of $\mu_r$ on it by 
$\zeta^{r-i}\cdot \sO$ for $\zeta^i\in\mu_r$.  So in this case for $g_i=\zeta^i (1\leq i\leq r-1)$ 
$$\beta_{g_i}=e^{2\pi i \frac{i}{r}}+e^{2\pi i \frac{r-i}{r}}$$
and 
$$n_i=e^{2\pi i \frac{i}{r}}(-c_1).$$
A concrete calculation for the generating series with $q_1,\cdots, q_{r-1}$ nonzero is given for the case $r=2$ in the next section. 
\end{rmk}

\subsubsection{The case of $r=2$}

Let us include an easy case that $r=2$ so that $\sS=\sqrt[2]{(S,C)}$ is a $2$-th root stack over $S$. 
The inertia stack 
$$I\sS=\sS\sqcup \sC$$
where $\sC$ is the component corresponding to the nontrivial element $\zeta\in \mu_2$.  Then in this case $\beta_{\zeta}$ has three possibilities:
$$\beta_{\zeta}=-2, 0, 2.$$
In this case $E=\sI_0\otimes K_{\sS}\oplus \sO\cdot \mathbf{t}^{-1}$, the corresponding actions of $\mu_2$ on the first  and second summands are all nontrivial.  Hence $\beta_{\zeta}=-2$, and  the corresponding $n_1=c_1:=c_1(\sS)$. 

Recall that the variable $q$ keeps track of the second Chern class $c_2(E)$, and 
$q_1$ keeps track of the classes $n_1$.  Since $n_1=c_1$, we have the same formula as in Theorem \ref{thm_generating_function_root_quintic}:
$$
\sum_{n=0}^{\infty}q^n\cdot q_1^{c_1} \int_{\sC^{[n]}}\frac{1}{N^{\vir}}=q_1^{c_1}\cdot 
A\cdot (1-q)^{g-1}\left(1+\frac{1-3q}{\sqrt{(1-q)(1-9q)}}\right)^{1-g},
$$
where 
$A:=(-2)^{\dim}\cdot (-2)^{2g-1}$.

\subsection{Quintic surfaces with ADE singularities}\label{subsec_quintic_ADE}

In this section we consider the quintic surface $\sS$ with isolated ADE singularities as in \S \ref{eqn_ADE_singularities}.  We take $\sS$ as a surface DM stack.  From \cite{Horikava}, the coarse moduli space of the DM stack $\sS$ lies in the 
component consisting of  smooth quintic surfaces in the moduli space of general type surfaces with topological invariants in 
(\ref{eqn_topology_invariants_quintic}).  This means that there exists a deformation family such that the smooth quintic surfaces can be deformed into quintic surfaces with ADE singularities.

Let us fix a quintic surface $\sS$, with $P_1,\cdots, P_s\in \sS$ the isolated singular points with ADE type. 
Let $G_1,\cdots, G_s$ be the local ADE finite group in $SU(2)$ corresponding to $P_1,\cdots, P_s\in \sS$. 
We use the notation $|G_i|$ to represent the set of conjugacy classes for $G_i$.

From \cite{Horikava}, the canonical sheaf  $K_{\sS}$ has no base point, therefore it is a line bundle over $\sS$ which is the canonical line bundle.  All the theory we constructed works for $\sS$. 
Still let $\sX=\mbox{Tot}(K_{\sS})$ be the total space of $K_{\sS}$, which is a Calabi-Yau smooth DM stack.  Choose a generating sheaf $\Xi$ on $\sS$ such that it contains all the irreducible representations of the local group $G_i$ of $P_i$.  
Fix a $K$-group class $\mathbf{c}\in K_0(\sS)$ (determining a Hilbert polynomial $H$),  and let $\N$ be the moduli space of stable Higgs pairs with $K$-group class $\mathbf{c}$. We work on the Vafa-Witten invariants 
$\VW$ for the moduli space $\N^{\perp}_{\mathbf{c}}$ of stable fixed determinant $K_{\sS}$ and 
trace-free  Higgs pairs with $K$-group class $\mathbf{c}$.

For $\sS$, the trivial line bundle  
$\sO_{\sS}$ is the only line bundle $L$ satisfying $0\leq \deg L\leq \frac{1}{2}\deg K_{\sS}$ where the degree is defined  by 
$\deg L=c_1(L)\cdot c_1(\sO_{\sS}(1))$.
In fact all the arguments in \S \ref{subsec_fixed_pairs} work for such a surface DM stack $\sS$.  The $\cc^*$-fixed Higgs pair $(E,\phi)$ in the component $\rM^{(2)}$ satisfies the following properties:
$$E=E_{0}\oplus E_{-1}$$
where $E_0$ and $E_{-1}$ all have rank one.  
For such a surface DM stack $\sS$, let 
$$N:=\max\{|G_i|:  G_i \text{~are local orbifold groups of~}P_1, \cdots, P_s\},$$
then the generating sheaf $\Xi$ we choose is $\Xi=\bigoplus_{j=0}^{N-1}\sO_{\sS}(j)$. 
We have a lemma similar to Lemma \ref{lem_E0E1}. 
\begin{lem}\label{lem_E0E1_ADE}
We still have 
the inequality (\ref{eqn_inequality}) for this generating sheaf and it  implies that 
$$\det(E_{-1})=\sO_{\sS}; \text{~and~} \det(E_{0})=K_{\sS}.$$
\end{lem}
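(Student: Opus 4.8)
The plan is to mirror the proof of Lemma \ref{lem_E0E1}, replacing the root-stack generating sheaf by $\Xi = \bigoplus_{j=0}^{N-1}\sO_\sS(j)$ and invoking the geometry of the ADE quintic only where the two arguments genuinely diverge. First I would note that $E_{-1}$ and $K_\sS$ are both rank-one sheaves, so by Definition \ref{defn_slope_stability} together with the notion of rank in Remark \ref{rem_stability}, the slopes $\mu_\Xi(E_{-1})$ and $\mu_\Xi(K_\sS)$ are controlled by the modified degrees $\deg_\Xi(E_{-1})$ and $\deg_\Xi(K_\sS)$. Thus the inequality (\ref{eqn_inequality}) is, after dividing by the common rank, a comparison of these two modified degrees.

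Next I would compute the modified degree explicitly for this $\Xi$. The essential simplification, relative to the root stack case, is that each summand $\sO_\sS(j) = p^*\sO_S(j)$ is pulled back from the coarse moduli space $S$, so there is no fractional twisting to track. As in equations (\ref{lem_eqn_1}) and (\ref{lem_eqn_2}), the modified degree of a rank-one sheaf $F$ is a sum over the summands of $\Xi$, computed via $p_*$, and for such pullback summands it takes the shape $\deg_\Xi(F) = N\deg(F) + c$, where $c$ depends only on $\Xi$ and not on $F$. Applying this to $E_{-1}$ and to $K_\sS$, the additive constants cancel in (\ref{eqn_inequality}), and I recover the ordinary degree bound $0 \leq \deg(E_{-1}) \leq \frac{1}{2}\deg(K_\sS)$, exactly the conclusion reached at the end of the proof of Lemma \ref{lem_E0E1}.

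Finally, since $E_{-1}$ is torsion-free of rank one, its determinant $\det(E_{-1})$ is a genuine line bundle with $c_1(\det E_{-1}) = c_1(E_{-1})$, so the displayed bound reads $0 \leq \deg(\det E_{-1}) \leq \frac{1}{2}\deg K_\sS$. By the property of $\sS$ recorded in \S\ref{subsec_quintic_ADE}, namely that $\sO_\sS$ is the only line bundle whose degree lies in this range, I conclude $\det(E_{-1}) = \sO_\sS$. Then from the fixed-determinant condition $\det(E) = \det(E_0)\otimes\det(E_{-1}) = K_\sS$ I obtain $\det(E_0) = K_\sS$.

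The step I expect to be the main obstacle is justifying that the determinant bound forces honest triviality of $\det(E_{-1})$ rather than triviality only away from the stacky points. This is where the ADE geometry enters: the isolated singular points are codimension two and therefore invisible to the degree pairing against the ample class $c_1(\sO_\sS(1))$, while each $G_i \subset SU(2)$ fixes $K_\sS$ so that the canonical bundle carries the trivial local representation. These two facts are precisely what underlies the asserted uniqueness of $\sO_\sS$ among low-degree line bundles on $\sS$, which I would cite rather than reprove, so that the degree bound genuinely pins down $\det(E_{-1})=\sO_\sS$ and no further torsion ambiguity survives.
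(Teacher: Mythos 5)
Your proposal is correct and follows essentially the same route as the paper: the paper's proof likewise observes that, for this choice of $\Xi$, the modified Hilbert polynomial (and hence the slope comparison in (\ref{eqn_inequality})) reduces to the ordinary one, i.e.\ to the $\Xi=\sO_{\sS}$ case, and then concludes from the degree bound $0\leq \deg(\det E_{-1})\leq \frac{1}{2}\deg K_{\sS}$ together with the uniqueness of $\sO_{\sS}$ among such line bundles, citing the arguments of \cite[\S 8]{TT1}. Your explicit computation of $\deg_{\Xi}$ for rank-one sheaves and your direct appeal to the uniqueness statement recorded in \S\ref{subsec_quintic_ADE} are just a slightly more detailed rendering of that same reduction.
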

\begin{proof}
The argument of inequality (\ref{eqn_inequality}) is the same as in the root stack case.  
Since the choice of the generating sheaf is $\Xi=\bigoplus_{j=0}^{N-1}\sO_{\sS}(j)$, we will see that the modified Hilbert polynomials 
$$H_{\Xi}(E,m) = \chi(\sS,E\otimes \Xi^{\vee}\otimes p^*\sO_S(m))$$
will be the same as $H(E, m)$ for sufficiently large $m$ for any $E$.  So this is reduced to the generating sheaf $\Xi=\sO_{\sS}$ case.  The result is true from the arguments as in \cite[\S 8]{TT1}.
\end{proof}

Therefore we have 
$E_0=\sI_0\otimes K_{\sS}$, and $E_{-1}=\sI_{1}\otimes \Tt^{-1}$
for some ideal sheaves $\sI_i$ and $\Tt$ is the standard one dimensional representation of $\cc^*$.
The morphism $\sI_0\to \sI_1$ is nonzero, therefore must satisfy  
$\sI_0\subseteq \sI_1$.  There exist two zero dimensional substacks parametrized by $\sI_0\subseteq \sI_1$.

The inertia stack  $I\sS$
is indexed by the element $g_i\in |G_i|$, $\sS_{g_i}=BG_i$ is the component corresponding to $g_i$. Then 
$$H^*(I\sS, \qq)=H^*(\sS)\bigoplus \oplus_{i=1}^{s}H^*(BG_i).$$
The cohomology of $H^*(BG_i)$ is isomorphic to $H^*(\pt)$. 
The orbifold Chern character morphism is defined by:
\begin{equation}\label{eqn_orbifold_Chern_character_isolated}
\widetilde{\Ch}: K_0(\sS)\to H^*_{\CR}(\sS,\qq)=H^*(I\sS, \qq)
\end{equation}
where $H^*_{\CR}(\sS,\qq)$ is the Chen-Ruan cohomology of $\sS$.  

For any coherent sheaf $E$, the restriction of $E$ to every $\sS_{g_i}=BG_i$ has a $G_i$-action such that it acts by 
$e^{2\pi i\frac{f_i}{\ord(G_i)}}$.  Let  
\begin{equation}\label{eqn_Chern_character_value1_isolated}
\widetilde{\Ch}(E)=(\Ch(E), \oplus_{i=1}^{s}\oplus_{g_i\in |G_i|}\Ch(E|_{BG_i})),
\end{equation}
where 
$$\Ch(E)=(\rk(E), c_1(E), c_2(E))\in H^*(\sS), \quad 
\Ch(E|_{BG_i})=(e^{2\pi i\frac{f_i}{\ord(G_i)}}\rk(E))\in H^*(BG_i).$$

We still let 
$$\widetilde{\Ch}(E)=(\widetilde{\Ch}_{g_i}(E))$$
where $\widetilde{\Ch}_{g_i}(E)$ is the component in $H^*(\sS_{g_i})=H^*(BG_i)$ as in (\ref{eqn_Chern_character_value1_isolated}). 
Still define:
\begin{equation}\label{eqn_Chern_character_degree_isolated}
\left(\widetilde{\Ch}_{g_i}\right)^k:=\left(\widetilde{\Ch}_{g_i}\right)_{\dim \sS_{g_i}-k}\in H^{\dim \sS_{g_i}-k}(\sS_{g_i}).
\end{equation}
If we have a rank $2$ $\cc^*$-fixed Higgs pair $(E,\phi)$ with fixed $c_1(E)=-c_1(\sS)$, then 
$\left(\widetilde{\Ch}_{g_i}\right)^2(E)=2$ only when $g_i=1$, which is  the rank; 
and 
$$
\left(\widetilde{\Ch}_{g_i}\right)^1(E)=-c_1(\sS)
$$
only when $g_i=1$, and 
$$
\left(\widetilde{\Ch}_{g_i}\right)^0(E)=
\begin{cases}
c_2(E), & g_i=1;\\
2e^{2\pi i\frac{f_i}{\ord(G_i)}}, & g_i \neq 1.
\end{cases}
$$

Therefore we have a similar  proposition as in Proposition (\ref{prop_second_fixed_loci_Hilbert_scheme}):
\begin{prop}\label{prop_second_fixed_loci_Hilbert_scheme_isolated}
In the case that the ranks of stable Higgs sheaves are $2$, we fix a $K$-group class 
$\mathbf{c}\in K_0(\sS)$ such that $\left(\widetilde{\Ch}_{\xi}\right)^1(\mathbf{c})=-c_1(\sS)$.  Then 
\begin{enumerate}
\item If $c_2(E)<0$, then the $\cc^*$-fixed locus is empty by the Bogomolov inequality which holds for surface DM stacks.
\item If $c_2(E)\geq 0$,  then 
$$\rM^{(2)}\cong \bigsqcup_{\alpha\in F_0K_0(\sS)}\Hilb^{\alpha, \mathbf{c}_0-\alpha}(\sS)$$
where $\mathbf{c_0}\in F_0K_0(\sS)$ such that $\left(\widetilde{\Ch}_{g}\right)^0(\mathbf{c}_0)=\left(\widetilde{\Ch}_{g}\right)^0(\mathbf{c})$; and $\Hilb^{\alpha, \mathbf{c}_0-\alpha}(\sS)$ is the nested Hilbert scheme of zero-dimensional substacks of $\sS$:
$$\sZ_1\subseteq \sZ_0$$
such that $[\sZ_1]=\alpha$, $[\sZ_0]=\mathbf{c}_0-\alpha$.  $\square$
\end{enumerate}
\end{prop}
\begin{proof}
This case of the Bogomolov inequality is treated in  \cite{JKT}.
\end{proof}

All the statement in (\ref{subsec_case_Z1_empty}) works for this $\sS$, until Formula (\ref{eqn_Euler_obstruction_bundle})
\begin{equation}\label{eqn_Euler_obstruction_bundle_isolated}
[\rM^{(2)}]^{\vir}=e((K_{\sS}^{[\mathbf{c}_0]})^*)\cap [\Hilb^{\mathbf{c}_0}(\sS)]
=(-1)^{\rk}e(K_{\sS}^{[\mathbf{c}_0]})\cap [\Hilb^{\mathbf{c}_0}(\sS)].
\end{equation}

There is a section $s$ of $K_{\sS}$ cutting out of a smooth curve $C$ and $g:=g_C=6$ as in (\ref{eqn_topology_invariants_quintic}).  
Also fixing $\mathbf{c}_0\in K_{0}(\sS)$ means that the second Chern class $c_2=n$ is fixed. 
Then this section $s$ induces a section $s^{[\mathbf{c}_0]}$ on the tautological vector bundle $K_{\sS}^{[\mathbf{c}_0]}$, which cuts out the Hilbert scheme $C^{[n]}:=\Hilb^{\mathbf{c}_0}(C)$.  Therefore 
\begin{equation}\label{eqn_Euler_obstruction_bundle2_isolated}
[\rM^{(2)}]^{\vir}
=(-1)^{\rk}C^{[n]}\subset  \Hilb^{n}(\sS)=\rM^{(2)}.
\end{equation}

The arguments of virtual normal bundle is similar as in  (\ref{subsec_case_Z1_empty}), and 
we also have 
$$T_{\Hilb^{n}(S)}|_{C^{[n]}}=T_{C^{[n]}}\oplus K_{\sS}^{[n]}|_{C^{[n]}}$$
in $K$-theory. 
Therefore
\begin{equation}\label{eqn_rM_integral_isolated}
\int_{[\rM^{(2)}]^{\vir}}\frac{1}{e(N^{\vir})}=(-2)^{-\dim}\cdot 2^n\cdot 
\int_{[C^{[n]}]}\frac{c_{\frac{1}{2}}((K_{\sS}^2)^{[n]})\cdot c_{-1}(T_{C^{[n]}})\cdot c_{-1}(K_{\sS}^{[n]})}
{c_{\bullet}(K_{\sS}^{[n]})\cdot c_{\bullet}((K_{\sS}^2)^{[n]})}.
\end{equation}

As in \S \ref{subsec_generating_function}, 
 we still use variables $q$ to keep track of the second Chern class $c_2(E)$ of the torsion free sheaf $E$, 
$q_1, \cdots, q_{\ord{|G_i|}-1}$ to keep track of the classes $c_0(E|_{BG_i})$. 
Let $q_1=\cdots =q_{\ord{|G_i|}-1}=1$, we can write down the generating function in 
\S \ref{subsec_generating_function}.
Thus all the calculations as in \cite[\S 8.4, \S 8.5]{TT1}  go through in this case, and we get the same result:

\begin{thm}\label{thm_generating_function_ADE_quintic}
Let $\sS$ be a quintic surface with isolated ADE singularities.  Then we the generating function: 
\begin{equation}\label{eqn_prop_formula}
\sum_{n=0}^{\infty}q^n \int_{C^{[n]}}\frac{1}{N^{\vir}}=
A\cdot (1-q)^{g-1}\left(1+\frac{1-3q}{\sqrt{(1-q)(1-9q)}}\right)^{1-g},
\end{equation}
where 
$A:=(-2)^{\dim}\cdot (-2)^{2g-1}$.    $\square$
\end{thm}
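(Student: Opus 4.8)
The plan is to evaluate the right-hand side of (\ref{eqn_rM_integral_isolated}) directly, following the strategy of \cite[\S 8.4, \S 8.5]{TT1}, and to observe that the isolated ADE structure leaves no trace on the computation. The essential simplification compared with the root stack case of Theorem \ref{thm_generating_function_root_quintic} is that the canonical curve $C\in|K_{\sS}|$ avoids the singular points $P_1,\dots,P_s$: since $K_{\sS}$ carries the trivial $G_i$-representation at each $P_i$, a section $s\in H^0(K_{\sS})$ cutting out $C$ may be chosen to miss the stacky locus, so $C$ is an honest smooth projective curve of genus $g=6$ and $C^{[n]}$ is the ordinary Hilbert scheme of points. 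Consequently none of the gerbe-correction factors $r^n$ appearing in Lemma \ref{lem_two_formula_for_gerbe_Hilbert_scheme} occur here, and the tautological class formulas of \cite[\S I.5]{ACGH} apply verbatim with $\sL=K_{\sS}|_C$ and $\sL=(K_{\sS}^2)|_C$.

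First I would recall the two tautological classes $\omega,\theta\in H^2(C^{[n]},\zz)$, the Chern class expressions for $T_{C^{[n]}}$ and for the tautological bundle $\sL^{[n]}$, and the intersection numbers $\int_{C^{[n]}}\frac{\theta^i}{i!}\omega^{n-i}=\binom{g}{i}$. Substituting these into the integrand of (\ref{eqn_rM_integral_isolated}) and writing the Chern classes in exponential form, the integrand becomes a product of powers of $(1\pm\omega)$, $(\omega-2)$ and $(1-2\omega)$ times a single exponential $e^{\alpha\theta}$, where $\alpha$ is a power series in $\omega$. The key reduction is then that, against the fundamental class of $C^{[n]}$, one may replace $e^{\alpha\theta}$ by $(1+\alpha\omega)^g$; this collapses the integrand to a polynomial in $\omega$ whose $\omega^n$-coefficient is the desired number, precisely as in the passage leading to (\ref{eqn_key_formula4}).

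Next I would assemble the generating function $\sum_{n\ge0} q^n\int_{C^{[n]}}\frac{1}{N^{\vir}}$. Following \cite[\S 6.3]{St2}, I would pass to the auxiliary double series in variables $x$ (tracking the integration cycle) and $t$ (tracking the exponent $n$ in the integrand), sum the resulting geometric series over $n$, and use $\int_{C^{[n]}}\omega^j=\delta_{ij}$ to replace $\omega$ by $x$. The diagonal $t=q/x$ is then extracted as a contour integral around the poles that tend to $0$ as $q\to0$; the only such pole is the root
$$x_0=\frac{1}{2}\left(1-\sqrt{1+\frac{8q}{1+q}}\right)$$
of $(1+q)x^2-(1+q)x-2q$, and evaluating the residue there produces the closed form, with $A=(-2)^{\dim}\cdot(-2)^{2g-1}$.

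The hard part is not the residue bookkeeping, which is identical to \cite[\S 8.5]{TT1}, but rather justifying the reduction to the smooth curve $C$. I would argue this in two steps. By Lemma \ref{lem_E0E1_ADE} the determinants are forced to be $\sO_{\sS}$ and $K_{\sS}$, so the relevant fixed component is the honest Hilbert scheme $\Hilb^{\mathbf{c}_0}(\sS)$ with obstruction bundle $(K_{\sS}^{[\mathbf{c}_0]})^*$ as in (\ref{eqn_Euler_obstruction_bundle_isolated}). Then the section $s$ cuts out $C$ away from the $P_i$, so its induced section $s^{[\mathbf{c}_0]}$ degenerates $\Hilb^{\mathbf{c}_0}(\sS)$ onto $C^{[n]}$ with $n=c_2$; since $C$ and every point of $C^{[n]}$ lie on the smooth, non-stacky part of $\sS$, the restrictions of $K_{\sS}^{[n]}$ and $(K_{\sS}^2)^{[n]}$ to $C^{[n]}$ coincide with the tautological bundles built from the line bundles $K_{\sS}|_C$ and $(K_{\sS}^2)|_C$ on the smooth curve. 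This is exactly what makes the genus-$6$ curve computation of \cite{TT1} applicable without modification, and it is the point I expect to require the most care to state cleanly.
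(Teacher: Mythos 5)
Your proposal follows essentially the same route as the paper: reduce the monopole-branch contribution to the ordinary Hilbert scheme $C^{[n]}$ of a smooth genus-$6$ canonical curve avoiding the stacky points (so no gerbe factors $r^n$ appear), and then run the tautological-class and contour-integral computation of \cite[\S 8.4, \S 8.5]{TT1} verbatim, extracting the residue at $x_0$ to obtain the closed form with $A=(-2)^{\dim}\cdot(-2)^{2g-1}$. Your added care in justifying that the restrictions of $K_{\sS}^{[n]}$ and $(K_{\sS}^2)^{[n]}$ to $C^{[n]}$ agree with the tautological bundles of $K_{\sS}|_C$ and $(K_{\sS}^2)|_C$ on the non-stacky locus is a welcome elaboration of a step the paper only asserts, but it is not a different argument.
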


\begin{rmk}
For surface DM stacks $\sS$ with only ADE singularities, the  Vafa-Witten invariants in Theorem \ref{thm_generating_function_ADE_quintic} are the same as in \cite[\S 8.5]{TT1}.  This may explain  that such surfaces $\sS$ can be put into a deformation family $\sS\to \aaa^1$ such that the generic fibers are  smooth quintic surfaces, and the central fibre is the surface with ADE singularities. The deformation invariance of the virtual cycle in (\ref{eqn_Euler_obstruction_bundle_isolated}) and (\ref{eqn_Euler_obstruction_bundle2_isolated}) implies that the invariants are the same.
\end{rmk}

\subsubsection{One vertical term}

We perform one more step to calculate one vertical term as in \cite{TT1}, and explain this time it will not give the same invariants as in the smooth case. 

This case is that $[\sZ_1]=[\sZ_0]\in L_0(\sS)$ component in $\rM^{(2)}$ in Proposition \ref{prop_second_fixed_loci_Hilbert_scheme_isolated}. 
So in this case $\Phi: \sI_0\to \sI_1$ is an isomorphism. Therefore
$$\Hilb^{\mathbf{c}_0, \mathbf{c}_0}(\sS)=\Hilb^{\mathbf{c}_0}(\sS)$$
and 
$$E=\sI_{\sZ}\otimes K_{\sS}\oplus \sI_{\sZ}\cdot \Tt^{-1}; \quad
\phi=\mat{cc}0&0\\
1&0\rix : E\to E\otimes K_{\sS}\cdot \Tt,$$
where $\sZ\subset \sS$ is a zero dimensional substack with $K$-group class $\mathbf{c}_0$. 
We use the same arguments as in \cite[\S 8.7]{TT1} for the torsion sheaf 
$\sE_{\phi}$ on $\sX$, which is the twist 
\begin{equation}\label{eqn_FZ}
\sF_{\sZ}:=(\pi^*\sI_{\sZ}\otimes \sO_{2\sS})
\end{equation}
by $\pi^*K_{\sS}$. Look at the following exact sequence:
$$0\to \pi^*\sI_{\sZ}(-2\sS)\longrightarrow \pi^*\sI_{\sZ}\longrightarrow \sF_{\sZ}\to 0,$$
we have
$$R\Hom(\sF_{\sZ},\sF_{\sZ})\to R\Hom(\pi^*\sI_{\sZ}, \sF_{\sZ})\to 
R\Hom(\pi^*\sI_{\sZ}, \pi_*\sF_{\sZ}(2\sS)).$$
The second arrow is zero since the section $\sO(2\sS)$ cutting out $2\sS\subset \sX$ annihilates $\sF_{\sZ}$. So by adjunction and the formula $\pi_*\sF_{\sZ}=\sI_{\sZ}\oplus \sI_{\sZ}\otimes K_{\sS}^{-1}\cdot \Tt^{-1}$, we have 
\begin{multline*}
R\Hom(\sF_{\sZ},\sF_{\sZ})\cong R\Hom_{\sS}(\sI_{\sZ},\sI_{\sZ})\oplus R\Hom_{\sS}(\sI_{\sZ},\sI_{\sZ}\otimes K_{\sS}^{-1})\Tt^{-1} \\
\oplus  R\Hom_{\sS}(\sI_{\sZ},\sI_{\sZ}\otimes K^2_{\sS})\Tt^{2}[-1]\oplus  R\Hom_{\sS}(\sI_{\sZ},\sI_{\sZ}\otimes K_{\sS})\Tt[-1].
\end{multline*}
We calculate the perfect obstruction theory
$$R\Hom_{\sX}(\sF_{\sZ},\sF_{\sZ})_{\perp}[1]$$
which comes from taking trace-free parts of the first and last terms and we have $\Hom_{\perp}=\Ext^3_{\perp}=0$. 
We have:
\begin{multline}\label{eqn_FZ_deformation}
\Ext^1_{\sX}(\sF_{\sZ},\sF_{\sZ})_{\perp}= \Ext^1_{\sS}(\sI_{\sZ},\sI_{\sZ})\oplus \Ext^1_{\sS}(\sI_{\sZ},\sI_{\sZ}\otimes K_{\sS}^{-1})\Tt^{-1} \\
\oplus  \Hom_{\sS}(\sI_{\sZ},\sI_{\sZ}\otimes K^2_{\sS})\Tt^{2}\oplus  \Hom_{\sS}(\sI_{\sZ},\sI_{\sZ}\otimes K_{\sS})_0\Tt.
\end{multline}
The obstruction $\Ext^2_{\perp}$ is just the dual of the above tensored with $\Tt^{-1}$.  The first term in (\ref{eqn_FZ_deformation}) is $T_{\sZ}\Hilb^{\mathbf{c}_0}(\sS)$, the fixed part of the deformations. The last term 
$\Hom_{\sS}(\sI_{\sZ},\sI_{\sZ}\otimes K_{\sS})_0\Tt=0$ since $\sI_{\sZ}\to \sI_{\sZ}$ is an isomorphism and taking trace-free we get zero. 
So the fixed part of the obstruction vanishes by duality. This tells us that 
$$[\Hilb^{\mathbf{c}_0}(\sS)]^{\vir}=[\Hilb^{\mathbf{c}_0}(\sS)].$$
Then the virtual normal bundle is: 
\begin{multline}\label{eqn_FZ_virtual_normal_bundle}
N^{\vir}= \Big[\Ext^1_{\sS}(\sI_{\sZ},\sI_{\sZ}\otimes K_{\sS}^{-1})\Tt^{-1} \oplus  \Hom_{\sS}(\sI_{\sZ},\sI_{\sZ}\otimes K^2_{\sS})\Tt^{2}\Big]-\\
\Big[\Ext^1_{\sS}(\sI_{\sZ},\sI_{\sZ}\otimes K_{\sS})\Tt\oplus \Ext^1_{\sS}(\sI_{\sZ},\sI_{\sZ}\otimes K_{\sS}^2)\Tt^2\oplus   \Ext^2_{\sS}(\sI_{\sZ},\sI_{\sZ}\otimes K^{-1}_{\sS})\Tt^{-1}\Big].
\end{multline}

It is quite complicated to integrate  equivariant Chern class on $\Hilb^{\mathbf{c}_0}(\sS)$, but we can do an easy case. 
Let $(\sS, P)$ be a quintic surface with only a singular point $P\in \sS$ with $A_1$-type singularity. 
By choosing a constant modified Hilbert polynomial $1$ on $\sS$ such that under the orbifold Chern character morphism:
$$K_0(\sS)\to H^*(I\sS)=H^*(\sS)\oplus H^*(B\mu_2)$$
the class 
$$1\mapsto (1,1)$$
where the second $1$ means the trivial one dimensional $\mu_2$-representation.  Then in this case 
the Hilbert scheme 
$$\Hilb^{1}(\sS)=\pt$$
which is a point.  This can be seen as follows. Around the singular point $P$, there is an open affine neighborhood 
$P\in U\subset \sS$ such that 
$$U\cong [\cc^2/\mu_2]$$
where $\zeta\in \mu_2$ acts on $\cc^2$ by 
$$\zeta\cdot (x,y)=(\zeta x, \zeta^{-1}y).$$
The Hilbert scheme of one point on $P\in U$ corresponds to invariant $\mu_2$-representation of length $1$, which must be trivial.  Then integration in this case must be: 
\begin{align*}
\int_{\Hilb^1(\sS)}\frac{1}{e(N^{\vir})}=\int_{\pt}1=1.
\end{align*}

Next we perform a degree two calculation. 
Let $2$ be a constant modified Hilbert polynomial  on $\sS$ such that under the orbifold Chern character morphism:
$$K_0(\sS)\to H^*(I\sS)=H^*(\sS)\oplus H^*(B\mu_2)$$
the class 
$$2\mapsto (2,2)$$
where the second $2$ means the regular two dimensional $\mu_2$-representation.  Then in this case 
the Hilbert scheme 
$$\Hilb^{2}(\sS)=\widetilde{S}$$
where 
$$\sigma: \widetilde{S}\to S$$
is the crepant resolution of the coarse moduli space $S$ of $\sS$.  Then $\widetilde{S}$ is still a smooth surface. 
Then the  integration in this case can be written as: 
\begin{align*}
\int_{\widetilde{S}}\frac{1}{e(N^{\vir})}&=\int_{\widetilde{S}}
\frac{e(T^*_{\widetilde{S}}\Tt)\cdot e(T_{\widetilde{S}}\otimes K_{\sS}^2\Tt^2)\cdot e(H^0(K_{\sS}^2)^*\Tt^{-1})}
{e(T_{\widetilde{S}}\otimes K_{\sS}^{-1}\Tt^{-1})\cdot e(H^0(K_{\sS}^2)\Tt^2)}\\
&=\int_{\widetilde{S}}\frac{t^2 c_{\frac{1}{t}}(T^*_{\widetilde{S}})\cdot (2t)^2 c_{\frac{1}{2t}}(T_{\widetilde{S}}\otimes K_{\sS}^2)\cdot (-t)^{\dim}}
{(-t)^2 c_{-\frac{1}{t}}(T_{\widetilde{S}}\otimes K_{\sS}^{-1})\cdot (2t)^{\dim}}
\end{align*}
Only the $t^0$ term contributes and we let $t=1$, and get 

$$(-2)^{\dim}\int_{\widetilde{S}}
\frac{(1-\widetilde{c}_1+\widetilde{c}_2)4(1+\frac{1}{2}c_1(T_{\widetilde{S}}\otimes K_{\sS}^2)+\frac{1}{4}c_2(T_{\widetilde{S}}\otimes K_{\sS}^2))}
{1-c_1(T_{\widetilde{S}}\otimes K_{\sS}^{-1})+c_2(T_{\widetilde{S}}\otimes K_{\sS}^{-1})}$$
where $\widetilde{c}_i=c_i(\widetilde{S})$.  
Let 
$$c_1:=c_1(T_{\sS}),$$
then $c_1(K_{\sS})=-c_1$. 
We use the same formula for the Chern classes:
$$c_1(T_{\widetilde{S}}\otimes K_{\sS}^{2})=\widetilde{c}_1-4c_1, \quad  c_2(T_{\widetilde{S}}\otimes K_{\sS}^{2})=\widetilde{c}_2-2\widetilde{c}_1\cdot c_1+4 c_1^2;$$ 
$$c_1(T_{\widetilde{S}}\otimes K_{\sS}^{-1})=\widetilde{c}_1+2c_1, \quad c_2(T_{\widetilde{S}}\otimes K_{\sS}^{-1})=\widetilde{c}_2+\widetilde{c}_1\cdot c_1+c_1^2 $$
and have 
\begin{align}\label{eqn_final_integral_ADE}
\int_{\widetilde{S}}\frac{1}{e(N^{\vir})}
&=(-2)^{-\dim}\int_{\widetilde{S}}\frac{(1-\widetilde{c}_1+\widetilde{c}_2)4(1+\frac{1}{2}(\widetilde{c}_1-4c_1)+\frac{1}{4}(\widetilde{c}_2-2\widetilde{c}_1\cdot c_1+4c_1^2))}
{1-(\widetilde{c}_1+2c_1)+\widetilde{c}_2+ \widetilde{c}_1\cdot c_1+c_1^2} \\
&=(-2)^{-\dim}\int_{\widetilde{S}}
(\widetilde{c}_2+14 \widetilde{c}_1\cdot c_1+ 4(\widetilde{c}_1)^2).  \nonumber
\end{align}

\subsubsection{Comparison with the Euler characteristic of the Hilbert scheme of points}

Let $\sS$ be a surface with finite ADE singularities $P_1, \cdots, P_s$.  The generating function of the Euler characteristic of the  Hilbert scheme of points on $\sS$ has been studied in \cite{GNS}, \cite{Toda}.  Let us recall the formula for the surface $\sS$ with $A_n$ singularities from \cite{Toda}, and 
let $P_1,\cdots, P_s$ have singularity type $A_{n_1}, \cdots, A_{n_s}$. 
Let $\sS\to S$ be the map to its coarse moduli space and $\sigma: \widetilde{S}\to S$ be the minimal resolution. 
Toda uses wall crossing formula to calculate that 
\begin{equation}\label{eqn_generating_Hilbert_scheme}
\sum_{n\geq 0}\chi(\Hilb^n(\sS))q^{n-\frac{\chi(\widetilde{S})}{24}}=\eta(q)^{-\chi(\widetilde{S})}\cdot \prod_{i=1}^{s}\Theta_{n_i}(q),
\end{equation}
where $\eta(q)=q^{\frac{1}{24}}\prod_{n\geq 1}(1-q^n)$ is the Dedekind eta function,  and 
$$
\Theta_{n}(q)=\sum_{(k_1,\cdots, k_n)\in\zz^n}q^{\sum_{1\leq i\leq j\leq n}k_i k_j}
e^{\frac{2\pi \sqrt{-1}}{n+2}(k_1+2k_2+\cdots+nk_n)}$$

The series $\Theta_n(q)$ 
is a $\qq$-linear combination of the theta series determined by some integer valued positive definite quadratic forms on $\zz^n$ and   $\Theta_n(q)$ is a modular form of weight $n/2$.  So 
the generating series (\ref{eqn_generating_Hilbert_scheme}) is a Fourier development of a meromorphic modular form of weight $-\chi(S)/2$ for some congruence subgroup in $SL_2(\zz)$.
This implies that it should be related to the S-duality conjecture for such surface DM stacks $\sS$. 
But the Euler characteristic of $\Hilb^n(\sS)$ is not the same as the contribution of it to the Vafa-Witten invariants $\VW(\sS)$, which is the integration over its virtual fundamental cycle in (\ref{eqn_rM_integral_isolated}). 

\begin{rmk}
Note that if the surface DM stack $\sS$ is a smooth projective surface $S$, the formula (\ref{eqn_generating_Hilbert_scheme}) is reduced to the G\"ottsche formula
$$\sum_{n\geq 0}\chi(\Hilb^n(S))q^{n-\frac{\chi(S)}{24}}=\eta(q)^{-\chi(S)}.
$$
This was the reason why in  \cite{TT1}, \cite{TT2} Tanaka-Thomas  thought in the beginning that the small Vafa-Witten invariants 
$\vw(S)$ defined using the Behrend function are the right Vafa-Witten invariants, but eventually changed their mind by calculations that the virtual localized invariants $\VW(S)$ are the correct choice for the Vafa-Witten invariants.
\end{rmk}

\subsection{Discuss on the crepant resolutions}

Let $\sS$ be a quintic surface with Type $ADE$ singularities $P_1, \cdots, P_m$, where at the point $P_i$, $\sS$ has type $A_{n_i}, D_{n_i}$ or $E_6, E_7, E_8$-singularity for $n_i\in \zz_{>0}$. Let $\sigma: \sS\to S$ be the map to its coarse moduli space.  We fix 
$$f: \widetilde{S}\to S$$
to be the minimal resolution of singularities. Then $\widetilde{S}$ is  smooth projective surface, and it also satisfies 
$$K_{\widetilde{S}}\cong f^*K_{S}$$
since the resolution of the type ADE singularities does not affect the canonical divisor, see \cite{Reid}.  So $f$ is a {\em crepant resolution}. 

Look at the following diagram:
\begin{equation}\label{eqn_diagram}
\xymatrix{
\widetilde{S}\ar[dr]_{f}&& \sS\ar[dl]^{\sigma}\\
&S&
}
\end{equation}
It is interesting to compare the Vafa-Witten invariants of $\widetilde{S}$ and the Vafa-Witten invariants of $\sS$. 

Let us fix topological data $(\rk, \widetilde{c}_1, \widetilde{c}_2)$ for $\widetilde{S}$ and let 
$\N_{(\rk, \widetilde{c}_1, \widetilde{c}_2)}(\widetilde{S})$ be the moduli space of stable Higgs pairs with topology data $(\rk, \widetilde{c}_1, \widetilde{c}_2)$.
Similarly, fixing a $K$-group class $\mathbf{c}\in K_0(\sS)$, and let $\N_{\mathbf{c}}(\sS)$ be the moduli space of stable Higgs pairs with  $K$-group class $\mathbf{c}$.  Recall from \S \ref{subsec_CStar_fixed_locus},  there are two $\cc^*$-fixed part for the moduli spaces $\N_{(r, \widetilde{c}_1, \widetilde{c}_2)}(\widetilde{S})$ and  $\N_{\mathbf{c}}(\sS)$.  Let 
$$\rM_{(\rk, \widetilde{c}_1, \widetilde{c}_2)}(\widetilde{S})$$
and 
$$\rM_{\mathbf{c}}(\sS)$$
be the first type fixed locus which are the instanton branches.  Then from (\ref{eqn_virtual_Euler_number}), the Vafa-Witten invariants contributed from the instanton branch are given by virtual Euler numbers
$$\VW^{\instan}_{(\rk, \widetilde{c}_1, \widetilde{c}_2)}(\widetilde{S})
=\int_{[\rM_{(\rk, \widetilde{c}_1, \widetilde{c}_2)}(\widetilde{S})]^{\vir}}c_{\vd}(E_{\rM(\widetilde{S})}^{\bullet})$$
and 
$$\VW^{\instan}_{\mathbf{c}}(\sS)
=\int_{[\rM_{\mathbf{c}}(\sS)]^{\vir}}c_{\vd}(E_{\rM(\sS)}^{\bullet})$$
where $E_{\rM(\widetilde{S})}^{\bullet}$ and $E_{\rM(\sS)}^{\bullet}$ are the perfect  obstruction theories of 
$\rM_{(\rk, \widetilde{c}_1, \widetilde{c}_2)}(\widetilde{S})$
and 
$\rM_{\mathbf{c}}(\sS)$ respectively. 
It is in general interesting to compare these two virtual Euler numbers. 

The second type of components of the moduli spaces of Higgs pairs  are denoted by 
$$\rM^{(2)}_{(\rk, \widetilde{c}_1, \widetilde{c}_2)}(\widetilde{S})$$
and 
$$\rM^{(2)}_{\mathbf{c}}(\sS).$$
From Proposition \ref{prop_second_fixed_loci_Hilbert_scheme_isolated}, in the rank $2$ case, 
we fix a $K$-group class 
$\mathbf{c}\in K_0(\sS)$ such that $\left(\widetilde{\Ch}_{\xi}\right)^1(\mathbf{c})=-c_1(\sS)$, then the component $\rM^{(2)}_{\mathbf{c}}(\sS)$ is a disjoint union of nested Hilbert schemes.  Similar result also holds for $\widetilde{S}$, see \cite[Lemma 8.3]{TT1}.  Theorem \ref{thm_generating_function_ADE_quintic} gives the generating function of one component
invariant  in $\rM^{(2)}_{\mathbf{c}}(\sS)$ (the special case of the Hilbert scheme of $n=c_2$ points on $\sS$).
Since for the smooth surface $\widetilde{S}$,  $K_{\widetilde{S}}$ is isomorphic to the canonical class of $S$, which is isomorphic to $K_{\sS}$ under the pullback of $\sigma$,  the surface $\widetilde{S}$ is also of general type and satisfies the condition that 
the line bundle $\sO_S$ is the only line bundle $L$ satisfying $0\leq \deg L\leq \frac{1}{2}\deg K_{\widetilde{S}}$ where the degree is defined  by 
$\deg L=c_1(L)\cdot c_1(\sO_{\widetilde{S}}(1))$.
Hence the result in Proposition 8.22 of \cite{TT1} holds for the component of Hilbert scheme of $n=\widetilde{c}_2$ points on $\widetilde{S}$.  Therefore we verify:
\begin{prop}\label{prop_crepant_resolution}
Within the monopole branches $\rM^{(2)}_{(\rk, \widetilde{c}_1, \widetilde{c}_2)}(\widetilde{S})$ and 
$\rM^{(2)}_{\mathbf{c}}(\sS)$, when putting into the generating function the Vafa-Witten  invariants  of the component coming from the Hilbert scheme of 
$\widetilde{c}_2=n$ on $\widetilde{S}$ and $\sS$ are equal.  $\square$
\end{prop}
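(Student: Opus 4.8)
The plan is to show that both sides of the asserted equality are governed by the \emph{same} closed-form generating series, so that the comparison reduces to matching a short list of numerical invariants rather than comparing the geometry of $\widetilde{S}$ and $\sS$ directly. The component of $\rM^{(2)}_{\mathbf{c}}(\sS)$ coming from the Hilbert scheme of $n=c_2$ points is the case $\sZ_1=\emptyset$ treated in \S\ref{subsec_case_Z1_empty}, and Theorem \ref{thm_generating_function_ADE_quintic} evaluates its contribution as
\[
\sum_{n=0}^{\infty}q^n \int_{C^{[n]}}\frac{1}{N^{\vir}}=A\cdot (1-q)^{g-1}\left(1+\frac{1-3q}{\sqrt{(1-q)(1-9q)}}\right)^{1-g},\qquad A=(-2)^{\dim}\cdot (-2)^{2g-1}.
\]
The only inputs are the genus $g$ of a smooth canonical curve and the integer $\dim=h^0(K^2)$ arising in the virtual normal bundle of \S\ref{subsec_virtual_normal_case_Z1}. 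For the smooth surface $\widetilde{S}$ the analogous contribution is computed in \cite[Proposition 8.22]{TT1}, giving the identical functional form with its own $g$ and $\dim$. Hence I would first reduce the statement to the claim that these two numbers agree for $\widetilde{S}$ and $\sS$.

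Next I would verify the numerical coincidences using the crepant resolution property $K_{\widetilde{S}}\cong f^*K_S$ together with $K_{\sS}\cong\sigma^*K_S$ (the ADE singularities being Gorenstein, $K_{\sS}$ carries the trivial $G_i$-representation at each $P_i$). This gives $K_{\widetilde{S}}^2=K_S^2=K_{\sS}^2=5$, so adjunction forces a smooth member of $|K_{\widetilde{S}}|$ to have genus $1+K_{\widetilde{S}}^2=6$, matching $g=g_C=6$ from (\ref{eqn_topology_invariants_quintic}). For the second input I would use that ADE singularities are rational, so $f_*\sO_{\widetilde{S}}=\sO_S$ and $R^{>0}f_*\sO_{\widetilde{S}}=0$; the projection formula then yields $h^0(\widetilde{S},K_{\widetilde{S}}^2)=h^0(S,K_S^2)$, while the trivial local representations of $K_{\sS}$ give $h^0(\sS,K_{\sS}^2)=h^0(S,K_S^2)$. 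Thus $\dim=h^0(K^2)=p_g+g_C=10$ in both cases. One should also note that all bundle degrees entering the tautological-class computation (for instance $\deg(K|_C)=K^2$ and $\deg(K^2|_C)=2K^2$) are likewise controlled by $K^2=5$, so the integrands over $C^{[n]}$ and its analogue on $\widetilde{S}$ are literally the same.

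Finally I would record that \cite[Proposition 8.22]{TT1} does apply to $\widetilde{S}$: it is smooth projective of general type and inherits from $S$ the arithmetic condition that $\sO_S$ is the only line bundle $L$ with $0\leq\deg L\leq\tfrac12\deg K_{\widetilde{S}}$, so the monopole-branch analysis of \cite[\S8]{TT1} runs verbatim. With $g=6$ and $\dim=10$ identical, the constant $A$ and the entire series coincide, which gives the claimed equality coefficient by coefficient. The main obstacle is the second step: confirming that $h^0(K^2)$ (and the relevant restriction degrees) are genuinely invariant under passage among $\widetilde{S}$, $S$, and $\sS$, since this is exactly where the crepant, Gorenstein and rational properties of the ADE singularities must be invoked carefully rather than assumed.
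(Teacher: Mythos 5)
Your proposal is correct and follows essentially the same route as the paper: the paper likewise proves the claim by combining Theorem \ref{thm_generating_function_ADE_quintic} (the generating function for the Hilbert-scheme component on $\sS$) with Proposition 8.22 of \cite{TT1} applied to $\widetilde{S}$, after observing that $K_{\widetilde{S}}\cong f^*K_S$ makes $\widetilde{S}$ of general type and satisfy the same line-bundle degree condition, so the two closed-form series coincide. Your extra verification that the inputs $g=6$ and $\dim=h^0(K^2)=10$ agree (via crepancy, rationality of the ADE singularities, and the projection formula) simply makes explicit what the paper leaves implicit.
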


\begin{rmk}
It is very interesting to formulate a conjecture to compare the Vafa-Witten invariants $\VW$ of $\widetilde{S}$ and $\sS$ in the more general settings.   This is unknown to the authors. 

If we consider the invariants $\vw$ by using weighted Euler characteristics by the Berhend function, then it is hoped that the invariants $\vw$ will be put into the Joyce-Song \cite{JS} wall crossing formula of changing stability conditions in the derived categories of coherent sheaves on $\widetilde{S}$ and $\sS$ in Diagram (\ref{eqn_diagram}) or the derived categories of coherent sheaves on the total spaces of the canonical line bundles Tot($K_{\widetilde{S}}$) and Tot($K_{\sS}$).  We leave this as a future work. 

If $\sS$ is a quintic surface with other type of singularities rather than ADE singularities, for instance, Wahl singularities, then the minimal resolution of the coarse moduli space $S$ will not be crepant.  It is more difficult in this case to calculate the Vafa-Witten invariants. 
\end{rmk}




\appendix

\section{The perfect obstruction theory, following Tanaka-Thomas}\label{Appendix_POT}

In this appendix we collect some basic materials for the perfect obstruction theory on the moduli space of Higgs pairs for surface DM stacks $\sS$. Our main reference is Sections 3, 5 of  \cite{TT1}.   For the perfect obstruction theory and symmetric obstruction theory we use the intrinsic normal cones in \cite{BF}, \cite{Behrend}.  Since \cite{BF} deals with DM stacks, it is reasonable that the construction of perfect obstruction theory on the Higgs pairs  works for DM stacks. 

\subsection{The perfect obstruction theory for $U(\rk)$-invariants}

First for the moduli spaces $\N$ and $\rM$, we have the exact sequence for the full cotangent complexes:
$$\Pi^*\ll_{\rM/B}\stackrel{\Pi^*}{\longrightarrow}\ll_{\N/B}\longrightarrow \ll_{\N/\rM}.$$

\subsubsection{Atiyah class and obstruction theory}

We recall the Atiyah class for a coherent sheaf $\sF$ on a $B$-DM stack $\sZ$. Let 
$$\sZ[\sF]:=\spec_{\sO_{\sZ}}(\sO_{\sZ}\oplus \sF)\stackrel{q}{\longrightarrow}\sZ$$
be the trivial square-zero thickening of $\sZ$ by $\sF$.  Here $\sO_{\sZ}\oplus \sF$ is a $\sO_{\sZ}$-algebra and the product is given by
$$(f,s)\cdot (g,t)=(fg, (ft+gs)).$$
There is a $\cc^*$-action fixing $\sO_{\sZ}$ and have weight one on $\sF$. Then we have an exact triangle of $\cc^*$-equivariant cotangent complexes:
\begin{equation}\label{eqn_A1}
q^*\ll_{\sZ/B}\to \ll_{\sZ[\sF]/B}\to \ll_{\sZ[\sF]/\sZ}\to q^*\ll_{\sZ/B}[1].
\end{equation}
Applying $q_*$ and taking the weight one part is an exact functor. We use the last two terms of (\ref{eqn_A1}) and apply $q_*$ to get:
$$\sF\to \sF\otimes \ll_{\sZ/B}[1]$$
and this belongs to $\Ext^1(\sF, \ll_{\sZ/B})$. We call this morphism the ``Atiyah class" of $\sF$. 

Apply the Atiyah class construction to the universal sheaf $\rE$ on $\N\times_{B}\sX$ we get 
\begin{equation}\label{eqn_Atiyah_rE}
\rE\to \rE\otimes \ll_{\N\times_{B}\sX/B}[1]
\end{equation}
Since  $\ll_{\N\times_{B}\sX/B}= \ll_{\N/B}\oplus  \ll_{\sX/B}$ (ignoring the pullbacks), we project it to the first summand and get the partial Atiyah class
\begin{equation}\label{eqn_Atiyah_rE2}
\At_{\rE, \N}: \rE\to \rE\otimes \ll_{\N/B}[1].
\end{equation}
Similarly, for $\E$ on $\N\times_{B}\sS$ we do: 
\begin{equation}\label{eqn_Atiyah_E}
\At_{\E, \N}: \E\to \E\otimes \ll_{\N/B}[1].
\end{equation}
Same proof as in \cite[Proposition 3.5]{TT1} gives:
\begin{equation}\label{eqn_Atiyah_pi_rE}
\pi_*\At_{\rE, \N}: \pi_*\rE\to \pi_*\rE\otimes \ll_{\N/B}[1]
\end{equation}
is (\ref{eqn_Atiyah_E}). 
\begin{rmk}
Tanaka-Thomas use the $\cc^*$-action on $\sX[\rE]$ and $\sS[\E]$ such that it has weight one on $\rE$, $\E$ respectively, but fixes $\sX$ and $\sS$. Then applying the definition of partial Atiyah class the statement is proved. 
\end{rmk}

Now from (\ref{eqn_Atiyah_rE2}), 
$$\At_{\rE, \N}\in \Ext_{\N\times_{B}\sX}^1(\rE,\rE\otimes p_{\sX}^*\ll_{\N/B})
=\Ext_{\N\times_{B}\sX}^1(R\cHom(\rE,\rE), p_{\sX}^*\ll_{\N/B}).$$
Let us project it to $p_{\sX}: \N\times_{B}\sX\to \N$ and use the relative Serre duality:
$$\At_{\rE, \N}\in \Ext_{\N\times_{B}\sX}^2(Rp_{\sX *}R\cHom(\rE,\rE\otimes K_{\sX/B}), \ll_{\N/B}).$$
In order to do the calculation we put the $\cc^*$-action to $K_{\sX/B}$.  Then he canonical line bundle 
$K_{\sX/B}\cong \sO_{\sX/B}$ is not equivariant trivial. The $\cc^*$ acts on $K_{\sX/B}$ with weight $-1$, so
$K_{\sX/B}\cong \sO\otimes \mathfrak{t}^{-1},$
where $\Tt^{-1}$ is the standard one-dimensional representation of $\cc^*$. 
 Hence we get: 
\begin{equation}\label{eqn_Atiyah_deformation_obstruction1}
\At_{\rE, \N}: R\cHom_{p_{\sX}}(\rE,\rE)[2]\Tt^{-1}\to  \ll_{\N/B}.
\end{equation}

\begin{prop}\label{prop_SPOT}
The truncated morphism in (\ref{eqn_Atiyah_deformation_obstruction1})
\begin{equation}\label{eqn_Atiyah_deformation_obstruction2}
\tau^{[-1,0]}(R\cHom_{p_{\sX}}(\rE,\rE)[2])\Tt^{-1}\to  \ll_{\N/B}
\end{equation}
is a relative symmetric perfect obstruction theory of amplitude $[-1,0]$ for $\N$. 
\end{prop}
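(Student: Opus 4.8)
The plan is to verify the three defining conditions separately: that the truncated map (\ref{eqn_Atiyah_deformation_obstruction2}) is a relative obstruction theory in the sense of Behrend--Fantechi, that the complex $\tau^{[-1,0]}(R\cHom_{p_{\sX}}(\rE,\rE)[2])\Tt^{-1}$ is perfect of amplitude $[-1,0]$, and that it is symmetric.

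For the obstruction-theory property I would argue exactly as in \cite[\S 3]{TT1}. The partial Atiyah class $\At_{\rE,\N}$ of the universal sheaf produced (\ref{eqn_Atiyah_deformation_obstruction1}), and the general formalism of cotangent complexes together with Atiyah/Kodaira--Spencer classes shows that the induced map is an isomorphism on $\mathcal{H}^0$ and a surjection on $\mathcal{H}^{-1}$; passing to $\tau^{[-1,0]}$ changes neither of these cohomology sheaves, so (\ref{eqn_Atiyah_deformation_obstruction2}) inherits the property from (\ref{eqn_Atiyah_deformation_obstruction1}). The only genuinely new point is that this reasoning survives the passage from schemes to Deligne--Mumford stacks, but the intrinsic normal cone and obstruction-theory machinery of \cite{BF} is set up for DM stacks, and the duality operators $\pi^{!},\pi_{*}$ were already seen to work for smooth DM stacks in the proof of Proposition \ref{prop_self_dual}, so the argument applies verbatim to $\N$.

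For perfectness and amplitude, note that fibrewise over a point of $\N$ the complex $R\cHom_{p_{\sX}}(\rE,\rE)[2]$ has cohomology $\Ext^0_{\sX}(\sE_\phi,\sE_\phi),\ldots,\Ext^3_{\sX}(\sE_\phi,\sE_\phi)$ placed in degrees $-2,-1,0,1$, and $\tau^{[-1,0]}$ retains precisely the deformation sheaf in degree $-1$ and the obstruction sheaf in degree $0$. Because the parametrized sheaves $\sE_\phi$ are stable, hence simple, the degree $-2$ cohomology sheaf $\mathcal{H}^{-2}$ is the trivial line bundle $\sO_{\N}$; relative Serre duality on the Calabi--Yau threefold $\sX$, with $K_{\sX/B}\cong\sO\otimes\Tt^{-1}$, then forces the degree $+1$ sheaf $\mathcal{H}^{1}$ to be a line bundle as well. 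Since the full complex $R\cHom_{p_{\sX}}(\rE,\rE)[2]\Tt^{-1}$ is perfect (the universal sheaf has proper support over $\N$, so the pushforward along $p_{\sX}$ of the perfect relative $R\cHom$ is perfect), coning off these two locally free extremal cohomology sheaves exhibits $\tau^{[-1,0]}(R\cHom_{p_{\sX}}(\rE,\rE)[2])\Tt^{-1}$ as perfect of amplitude $[-1,0]$, as in \cite[\S 3]{TT1}.

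Finally, for symmetry I would invoke Proposition \ref{prop_self_dual}, which is precisely the relative Serre duality $R\cHom_{p_{\sX}}(\rE,\rE)^{\vee}\cong R\cHom_{p_{\sX}}(\rE,\rE)[3]\otimes\Tt^{-1}$ on the CY3 $\sX$. Combining this self-duality with the shift by $[2]$ and the twist by $\Tt^{-1}$ entering the definition of $E^{\bullet}:=\tau^{[-1,0]}(R\cHom_{p_{\sX}}(\rE,\rE)[2])\Tt^{-1}$, and using that the truncation interchanges the two locally free ends deleted above, yields an isomorphism $E^{\bullet}\cong (E^{\bullet})^{\vee}[1]$ compatible with the obstruction-theory maps, which is the defining condition for a symmetric obstruction theory. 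The step I expect to be the real obstacle is the perfectness claim: one must make sure that deleting the trivial $\Ext^0$ and $\Ext^3$ contributions produces a genuine two-term complex of vector bundles over the possibly non-reduced stacky base $\N$, rather than merely a complex with the correct cohomology sheaves, and that this construction is stable under base change. This is exactly where simpleness of the stable sheaves and the local freeness forced by Calabi--Yau duality are indispensable, and where the care needed in the scheme-to-stack transition is concentrated.
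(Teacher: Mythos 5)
Your proposal is correct and takes essentially the same route as the paper: the paper likewise establishes the obstruction-theory property by the Atiyah/Kodaira--Spencer argument of \cite[Theorem 3.11]{TT1} adapted to DM stacks via \cite{BF}, proves perfectness of amplitude $[-1,0]$ by coning off the trivial $\sO_{\N}$ factor in the extremal degree coming from simpleness of the stable sheaves together with its Serre-dual factor at the other end, and deduces symmetry from the self-duality of Proposition \ref{prop_self_dual}. The only cosmetic difference is that the paper verifies Condition (3) of \cite[Theorem 4.5]{BF} directly (square-zero extensions, the obstruction class, and Illusie's criterion) rather than the equivalent $h^0$-isomorphism/$h^{-1}$-surjection formulation you invoke.
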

\begin{proof}
We combine the results in \cite[Theorem 3.11, Corollary 3.12]{TT1}. 
We first need to check Condition (3)  in Theorem 4.5 of \cite{BF}, so that 
the morphism in (\ref{eqn_Atiyah_deformation_obstruction1}) 
$$R\cHom_{p_{\sX}}(\rE,\rE)[2]\Tt^{-1}\to  \ll_{\N/B}.$$
is an obstruction theory.  Behrend-Fantechi's construction is for DM stacks, therefore if working locally in \'etale topology, the arguments in \cite[Theorem 3.11]{TT1} works for DM stacks. 

Let $T$ be a $B$-scheme and $T\subset \overline{T}$ a square zero extension with ideal sheaf $I$ and $I^2=0$. 
Let $g: T\to \N$ be a morphism. Then the pullback of (\ref{eqn_Atiyah_deformation_obstruction1}) is:
$$g^*R\cHom_{p_{\sX}}(\rE,\rE)[2]\to g^*\ll_{\N/B};$$
There is natural morphism 
$$g^*\ll_{\N/B}\to \ll_{T/B};$$
and 
$$\ll_{T/B}\to \ll_{T/\overline{T}}\to \tau^{\geq -1}\ll_{T/\overline{T}}=I[1],$$
which is the  Kodaira-Spencer class.
Compose these three morphisms we get an element 
$$\ob\in \Ext^{-1}_{T}(g^*R\cHom_{p_{\sX}}(\rE,\rE), I).$$
To check Condition (3) in Theorem 4.5 of \cite{BF}, we need to show that $\ob$ vanishes if and only if there exists an extension $\overline{T}$ of $T$ and a map of $g$ is $B$-morphism; and also when $\ob=0$ the set of extensions is a torsor under $\Ext^{-2}_{T}(g^*R\cHom_{p_{\sX}}(\rE,\rE), I)$. 
This is from the proof of the second half of \cite[Theorem 3.11]{TT1}, by showing that the pullback of the partial Atiyah class $\At_{\rE, \N}$ through 
$$\overline{g}: \id\times g: \sX\times_{B}T\to \sX\times_{B}\N,$$
gives the partial Atiyah class $\At_{\overline{g}^*\rE, T}$, then composed with the Kodaira-Spencer class gives 
$$\ob\in  \Ext^{-1}_{T}(g^*R\cHom_{p_{\sX}}(\rE,\rE), I)\cong 
\Ext^{2}_{\sX\times_{B}T}(\overline{g}^*\rE, \overline{g}^*\rE\otimes I).$$
Then apply \cite[Proposition III. 3.1.5]{Illusie} to get the Condition (3) in Theorem 4.5 of \cite{BF}.

Next we prove that the truncation $\tau^{[-1,0]}R\cHom_{p_{\sX}}(\rE,\rE)[2]$ gives a symmetric perfect obstruction theory.  First note that $R\cHom_{p_{\sX}}(\rE,\rE)$ is perfect of amplitude $[0,3]$, since we are working on $\sX$ which is dimension three.  The stable sheaves are simple and have automorphism group $\cc$. Then taking cones
$$\Cone\left(\sO_{\N}\stackrel{\id}{\longrightarrow}R\cHom_{p_{\sX}}(\rE,\rE)\right)=
\tau^{\geq 1}R\cHom_{p_{\sX}}(\rE,\rE)$$
is perfect of amplitude $[1,3]$. 

Serre duality gives $\Ext^3_{\sX}(\rE_t, \rE_t)\cong \Hom_{\sX}(\rE_t, \rE_t)=\cc$, therefore 
$$\Cone\left(\tau^{\geq 1}R\cHom_{p_{\sX}}(\rE,\rE)\stackrel{\tr}{\longrightarrow}\sO_{\N}(-3)\right)[-1]=
\tau^{[1,2]}R\cHom_{p_{\sX}}(\rE,\rE)$$
is perfect of amplitude $[1,2]$. Then the obstruction theory (\ref{eqn_Atiyah_deformation_obstruction1}) factors through these two truncations and give rise to the relative perfect obstruction theory
(\ref{eqn_Atiyah_deformation_obstruction2}).  It is symmetric since it is self-dual from Proposition \ref{prop_self_dual}. 
\end{proof}

\subsection{Deformation of Higgs fields}

We mimic the construction in  \cite[\S 5]{TT1} to fix the determinant of $E$ in $(E,\phi)$, and make $\phi$ trace-free. 
This corresponds to the $SU(\rk)$-Higgs bundles in Gauge theory. Since almost all the construction works for DM stacks, we only review the essential steps and leave its details to  \cite[\S 5]{TT1}.

From the appendix,  
$$\pi_*\At_{\rE, \N}: \pi_{*}\rE\to \pi_{*}\otimes\ll_{\N}[1]$$
is $\At_{\E, \N}$. We have the following commutative diagram:
\[
\xymatrix{
\T_{\N}\ar[r]^{\pi_{*}}\ar[d]_{\At_{\rE, \N}}& \Pi^*\T_{\rM}\ar[d]^{\At_{\E,\N}}\\
R\cHom_{p_{\sX}}(\rE,\rE)[1]\ar[r]^{\Pi_{*}}& R\cHom_{p_{\sS}}(\E,\E)[1].
}
\]
We dualize the above diagram and use exact triangle (\ref{eqn_deformation2}), 
\begin{equation}\label{diagram_A.1-1}
\xymatrix{
R\cHom_{p_{\sS}}(\E,\E\otimes K_{\sS})[1]\ar[r]\ar[d]_{\At_{\E, \N}}& R\cHom_{p_{\sX}}(\rE,\rE)[2]\ar[r]\ar[d]_{\At_{\sE,\N}}&  R\cHom_{p_{\sS}}(\E,\E)[2]\ar[d]_{\At_{\sE, \N/\rM}}\\
\Pi^*\ll_{\rM}\ar[r]^{\Pi^*}& \ll_{\N}\ar[r] & \ll_{\N/\rM}.
}
\end{equation}
We think of the right hand side vertical arrow as the projection of $\At_{\rE, \N}$ from $\ll_{\N}$ to $\ll_{\N/\rM}$.

\subsubsection{Deformation of quotient $\pi^*\E\to \rE$} 

Fro the morphism $\Pi: \N\to \rM$, the fibre over any $E\in\rM$ is the space of Higgs fields $\phi$, and we can take 
$\sE_\phi$ as the quotient: $\pi^*E\to \sE_\phi\to 0$ as in (\ref{eqn_rE_quotient}). So $\N/\rM$ is part of Quot scheme of $\pi^*E$.  From exact sequence (\ref{eqn_deformation1}), the deformation and obstruction are governed by 
\begin{equation}\label{eqn_deformation_quot}
\Hom_{\sX}(\pi^*E\otimes K_{\sS}^{-1},\sE)\cong \Hom_{\sS}(E, E\otimes K_{\sS})
\end{equation}
and 
\begin{equation}\label{eqn_obstruction_quot}
\Ext^1_{\sS}(E, E\otimes K_{\sS}).
\end{equation}
These are the cohomologies of $(R\cHom(E,E)[2])^{\vee}$.  We want to explain that the diagram exactly gives the usual obstruction theory of Quot scheme $\N/\rM$. 

We recall the reduced Atiyah class of the quotient of Illusie. Consider $\N\times \sX[\rE]$ with its projection to 
$\N\times \sX/\rM\times \sX$, and similarly  $\N\times \sX[\pi^*\E]$ and $\N\times\sX[\rE]$ with its projection to 
$\N\times \sX/\rM\times \sX$.  The morphism
$$\N\times\sX[\rE]\hookrightarrow \N\times\sX[\pi^*\E]$$
is an embedding, since $\pi^*\E\to \rE$ is a quotient with ideal $\pi^*\E\otimes K_{\sS}^{-1}$. 
We have the commutative diagram of exact triangles:
\begin{equation}\label{diagram_A.1-2}
\xymatrix{
\ll_{\N\times\sX[\rE]/\rM\times\sX}\ar[r]\ar[d]& \ll_{\N\times\sX[\rE]/\N\times\sX}\ar[r]\ar[d]&  
\ll_{\N\times\sX/\rM\times\sX}\ar[d]\\
\ll_{\N\times\sX[\rE]/\rM\times\sX[\pi^*\E]}\ar[r]& \ll_{\N\times\sX[\rE]/\N\times\sX[\pi^*\E]}\ar[r] &
\ll_{\N\times\sX[\pi^*\E]/\rM\times\sX[\pi^*\E]}.
}
\end{equation}
on $\N\times \sX[\rE]$.  Taking the degree $1$ part and pushdown to $\N\times\sX$ we get the right square of the above diagram
\[
\xymatrix{
\rE\ar[r]\ar[d] & \rE\otimes \ll_{\N/\rM}[1]\ar@{=}[d]\\
\pi^*\E\otimes K_{\sS}^{-1}[1]\ar[r]& \rE\otimes\ll_{\N/\rM}[1].
}
\]
The reduced Atiyah class of the quotient $\pi^*\E\to \rE$ is given by the bottom arrow above:
\begin{equation}\label{eqn_A.1-3}
\At_{\Phi}^{\red}\in \Hom(\pi^*\E\otimes K_{\sS}^{-1}, \rE\otimes \ll_{\N/\rM})
\cong \Hom(\E\otimes K_{\sS}^{-1},\E\otimes\ll_{\N/\rM})
\end{equation}
since $\pi_{*}\rE=\E$. 
\begin{prop}(\cite[Proposition 5.8]{TT1})\label{prop_two_Atiyah_agree}
The right hand side vertical arrow $\At_{\rE, \N/\rM}$ in (\ref{diagram_A.1-1}) is $\At_{\Phi}^{\red}$.
\end{prop}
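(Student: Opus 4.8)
The plan is to follow the proof of \cite[Proposition 5.8]{TT1}, verifying that each step survives the passage from projective surfaces to surface DM stacks. The two morphisms to be compared both arise from square-zero thickenings and manipulations of (relative) cotangent complexes, so the argument is essentially formal once one knows that Illusie's theory of the (reduced) Atiyah class and the Behrend--Fantechi machinery of \cite{BF}, \cite{Illusie} apply to DM stacks; the latter is guaranteed because both references are written for DM stacks and because $\pi\colon\sX\to\sS$ is affine, so $\pi_*$ is exact and commutes with the base-change and thickening operations we need.

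First I would recall the relative version of the partial Atiyah class: projecting $\At_{\rE,\N}$ from $\ll_{\N/B}$ onto $\ll_{\N/\rM}$ produces $\At_{\rE,\N/\rM}$, the right-hand vertical arrow of (\ref{diagram_A.1-1}). Concretely it is obtained by applying the thickening construction $\N\times\sX[\rE]\to\N\times\sX$ relative to the base $\rM\times\sX$, i.e.\ fixing the sheaf $E\in\rM$ and deforming only the quotient datum. Next I would spell out Illusie's reduced Atiyah class of the quotient $\pi^*\E\twoheadrightarrow\rE$, whose kernel is $\pi^*\E\otimes K_{\sS}^{-1}$ by (\ref{eqn_rE_quotient}): one forms the nested thickenings $\N\times\sX[\rE]\hookrightarrow\N\times\sX[\pi^*\E]$ and writes down the commutative diagram of exact triangles of cotangent complexes (\ref{diagram_A.1-2}) on $\N\times\sX[\rE]$.

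The heart of the argument is then to take the weight-one part of (\ref{diagram_A.1-2}) and push it down along $\pi$. Using $\pi_*\rE=\E$ and the affineness of $\pi$, the right-hand square of (\ref{diagram_A.1-2}) becomes the square relating $\rE\to\rE\otimes\ll_{\N/\rM}[1]$ to $\pi^*\E\otimes K_{\sS}^{-1}[1]\to\rE\otimes\ll_{\N/\rM}[1]$, whose bottom arrow is exactly $\At_{\Phi}^{\red}$ as in (\ref{eqn_A.1-3}) and whose top arrow is the relative Atiyah class $\At_{\rE,\N/\rM}$. Commutativity of this square, together with the identification of the two triangles coming from (\ref{eqn_deformation1})--(\ref{eqn_deformation2}), forces $\At_{\rE,\N/\rM}=\At_{\Phi}^{\red}$.

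The step I expect to be the main obstacle is checking that the formation of Illusie's reduced Atiyah class genuinely commutes with $\pi_*$ at the level of the derived categories of the DM stacks involved, so that pushing forward the degree-one part of (\ref{diagram_A.1-2}) yields precisely the asserted square rather than a square up to an unspecified correction. For schemes this is \cite[Proposition 5.8]{TT1}; here it rests on $\pi$ being affine and representable, so that $\pi_*$ is $t$-exact and the spectral construction of Proposition \ref{prop_equivalent_categories} identifies $\pi_*\sO_{\sX}$-module structures with Higgs data compatibly with square-zero extensions. Once this compatibility is established, the remaining bookkeeping of weights and shifts is identical to the surface case.
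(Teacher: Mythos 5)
Your proposal is correct and takes essentially the same route as the paper: the paper gives no independent argument but sets up precisely the construction you describe — the nested thickenings $\N\times\sX[\rE]\hookrightarrow\N\times\sX[\pi^*\E]$, the diagram of cotangent-complex triangles (\ref{diagram_A.1-2}), and the passage to degree-one parts yielding the comparison square whose commutativity gives the identification — and then quotes \cite[Proposition 5.8]{TT1}, noting that every step carries over verbatim to DM stacks. One cosmetic slip in your write-up: the pushdown producing that square is along the thickening projection $\N\times\sX[\rE]\to\N\times\sX$ (the exact functor $q_*$ applied to the weight-one part), not along $\pi$; affineness of $\pi$ enters only through $\pi_*\rE=\E$ and adjunction, which identify $\Hom(\pi^*\E\otimes K_{\sS}^{-1},\rE\otimes\ll_{\N/\rM})$ with $\Hom(\E\otimes K_{\sS}^{-1},\E\otimes\ll_{\N/\rM})$ as in (\ref{eqn_A.1-3}).
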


\begin{rmk}
The obstruction theory on $\N/\rM$ induced from taking $\N$ as the moduli space of coherent sheaves $\sE$ on 
$\sX$ is the same as the standard obstruction theory for quotients $\pi^*E\to \sE\to 0$ by the Atiyah class. 
\end{rmk}

\subsubsection{Higgs fields deformations}

Recall that for the map $\Pi: \N\to \rM$, the fibre over a fixed sheaf $E\in\rM$ is the space of Higgs fields $\phi$.  Then the tangent of such a Higgs field is given by 
\begin{equation}\label{eqn_A.1-local_deformation}
\Hom(E,E\otimes K_{\sS})
\end{equation}
and the obstruction is given by
\begin{equation}\label{eqn_A.1-local_obstruction}
\Ext^1(E,E\otimes K_{\sS}).
\end{equation}
So there will have a relative perfect obstruction theory by putting these local deformations and obstructions together:
\begin{equation}\label{eqn_A.1-4}
R\cHom(\E,\E\otimes K_{\sS})^{\vee}\to \ll_{\N/\rM}.
\end{equation}
We argue as in \cite[\S 5]{TT1} that (\ref{eqn_A.1-4}) is the same as (\ref{eqn_A.1-3}) given by the reduced Atiyah class. 

In  \cite[\S 5]{TT1} the authors check that these two obstruction theories are the same in three steps. 
First restrict the moduli space $\rM$ to a point $E$, where the moduli space of Higgs fields $\phi$ on $E$ is the linear vector space 
$$\sfH:=\Hom(E, E\otimes K_{\sS})$$
i.e., the tangent space at a point $\phi\in \sfH$, as a linear space and as a space of quotients
$$\Hom(\pi^*(E\otimes K_{\sS}^{-1}), \sE)$$
are the same. 
This case works for surface DM stacks, since one can always pick up a geometric point $E$ in the moduli space. 

Next we check the two obstruction theories for Higgs bundles on $\sS$. 
So let $\rM$ be the moduli space of vector bundles on $\sS$, where we shrink $\rM$ if necessary to achieve this. 
Let $\E$ be the universal bundle on $\rM\times \sS$ and let 
$$\sfH:=\cHom(\E, \E\otimes K_{\sS})\stackrel{\rho}{\longrightarrow}\rM\times \sS.$$
Then over $\widetilde{\sfH}:=\sfH\times_{\rM\times \sS}\rM\times\sX\stackrel{\pi}{\longrightarrow}\sfH$ we have a universal Higgs field $\Phi$ and a universal quotient 
$$0\to \pi^*(\rho^*\E\otimes K_{\sS}^{-1})\longrightarrow \pi^*(\rho^*\E)\longrightarrow \rE\to 0.$$
Then using the linear structure of the fibre of $\rho$, we have:
\begin{equation}\label{eqn_A.1-5}
\rho^*\sfH\cong \T_{\sfH/\rM\times \sS}\stackrel{\At_{\Phi}^{\red}}{\longrightarrow}
\pi_{*}\cHom(\pi^*(\rho^*\E\otimes K_{\sS}^{-1}),\rE)\cong \cHom(\rho^*\E, \rho^*\E\otimes K_{\sS})\cong \rho^*\sfH.
\end{equation}
Then the composition morphism in (\ref{eqn_A.1-5}) is identity, see \cite[Lemma 5.8]{TT1}. 
Now take $\N/\rM$ as the moduli space of sections of $\sfH\to \rM\times\sS$, the graph of $\Phi$ gives an embedding 
\begin{equation}\label{eqn_A.1-6}
\N\times\sS\stackrel{\Phi}{\hookrightarrow}\Pi^*\sfH.
\end{equation}
where $\Pi: \N\times\sS\to \rM\times\sS$ is the projection.  The normal bundle of $\N\times\sS$ in $\Pi^*\sfH$ is the fiberwise tangent bundle of 
$$\Pi^*\sfH\to \N\times\sS$$
and is just $\Pi^*\sfH$ by the linear structure.  Therefore let 
\begin{equation}\label{eqn_A.1-7}
N_{\Phi}:=\Cone\left(\T_{\N\times\sS}\stackrel{D\Phi}{\longrightarrow}\Pi^*\T_{\Pi^*\sfH}\right)
\cong \Phi^*\T_{\Pi^*\sfH/\N\times\sS}\cong \Pi^*\sfH.
\end{equation}
Then consider 
$$p_{\sS}^*\T_{\N/\rM}\cong \T_{\N\times\sS/\rM\times\sS}\stackrel{D\Phi}{\longrightarrow}\Phi^*\T_{\Pi^*\sfH/\N\times\sS}$$
and applying $Rp_{\sS*}$ we get:
\begin{equation}\label{eqn_A.1-8}
\T_{\N/\rM}\longrightarrow Rp_{\sS*}(N_{\Phi})\cong Rp_{\sS*}(\Pi^*\sfH)\cong R\cHom_{p_{\sS}}(\E, \E\otimes K_{\sS}).
\end{equation}

\begin{prop}\label{prop_POT_relative}
The relative perfect obstruction theory  
$$R\cHom_{p_{\sS}}(\E, \E\otimes K_{\sS})^{\vee}\to \ll_{\N/\rM}$$
by taking dual of (\ref{eqn_A.1-8}) is the same as the right hand side arrow in (\ref{diagram_A.1-1}).
\end{prop}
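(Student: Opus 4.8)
The plan is to identify the section-space obstruction theory (\ref{eqn_A.1-8}) with the reduced Atiyah class $\At_{\Phi}^{\red}$ of the universal quotient $\pi^*\E\to\rE$, because by Proposition \ref{prop_two_Atiyah_agree} the latter is already known to equal the right-hand vertical arrow $\At_{\rE,\N/\rM}$ in (\ref{diagram_A.1-1}). Thus, after dualizing, the proposition reduces to showing that the map $\T_{\N/\rM}\to R\cHom_{p_{\sS}}(\E,\E\otimes K_{\sS})$ produced by applying $Rp_{\sS*}$ to the differential $D\Phi$ of the graph embedding agrees with the map induced by $\At_{\Phi}^{\red}$.

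I would run the reduction exactly as in \cite[\S 5]{TT1}, in three stages of increasing generality, checking at each stage that the stacky features of $\sS$ do not interfere. Stage one fixes a single geometric point $E\in\rM$: here the fibre of $\Pi$ over $E$ is the linear space $\sfH=\Hom(E,E\otimes K_{\sS})$, whose tangent space at a Higgs field $\phi$ is canonically $\sfH$ itself, and the description of $\sfH$ as the space of quotients $\Hom(\pi^*(E\otimes K_{\sS}^{-1}),\sE)$ coming from (\ref{eqn_rE_quotient}) matches this linear identification. Since $\sS$ is a smooth DM stack it has enough geometric points, so this pointwise check is legitimate.

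Stage two passes to a (shrunk if necessary) open substack $\rM$ of vector bundles with universal bundle $\E$, forms $\sfH=\cHom(\E,\E\otimes K_{\sS})\xrightarrow{\rho}\rM\times\sS$, and over $\widetilde{\sfH}$ considers the universal Higgs field $\Phi$ together with its universal quotient. The crucial computation is (\ref{eqn_A.1-5}): the composite $\rho^*\sfH\cong\T_{\sfH/\rM\times\sS}\xrightarrow{\At_{\Phi}^{\red}}\rho^*\sfH$ is the identity, which is \cite[Lemma 5.8]{TT1}. As this is an assertion about the reduced Atiyah class of a universal quotient along the relative affine morphism $\pi$, and $\pi$ is affine with $\pi_*$ exact and $\pi_*\rE=\E$ on the DM stack $\sX\to\sS$, the linear-algebra and deformation-theory computation goes through verbatim. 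Stage three then uses the graph embedding (\ref{eqn_A.1-6}) $\N\times\sS\hookrightarrow\Pi^*\sfH$, identifies the normal bundle $N_{\Phi}\cong\Pi^*\sfH$ through the linear fibre structure (\ref{eqn_A.1-7}), and applies $Rp_{\sS*}$ to $D\Phi$ to reach (\ref{eqn_A.1-8}); the identity from stage two forces the resulting arrow to coincide with the one built from $\At_{\Phi}^{\red}$, and dualizing gives the claimed equality of the two relative obstruction theories.

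The main obstacle is not any single computation but the verification that all of these operations — the reduced Atiyah class of a quotient, its compatibility under the graph embedding, and the base-change and pushforward identities that collapse the global statement to the pointwise linear model — remain valid when $\sS$ (hence $\sX$) is a Deligne--Mumford stack rather than a scheme. The essential points are that $\pi\colon\sX\to\sS$ is affine with $\pi_*$ exact and $\pi_*\rE=\E$, that the cotangent-complex formalism and Illusie's deformation theory of quotients hold for DM stacks (as already invoked in Proposition \ref{prop_two_Atiyah_agree} and in \cite{BF}), and that $\sS$ carries enough geometric points to justify the stage-one reduction. Granting these, the argument of \cite[\S 5]{TT1} transfers without essential change.
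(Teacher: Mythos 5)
Your proposal is correct and takes essentially the same route as the paper: the paper also reduces the statement to identifying the dual of (\ref{eqn_A.1-8}) with the reduced Atiyah class $\At_{\Phi}^{\red}$ of Proposition \ref{prop_two_Atiyah_agree}, via the three-stage reduction of \cite[\S 5]{TT1} (pointwise linear model, universal bundle case, graph embedding), with the key input being the identity (\ref{eqn_A.1-5}), i.e.\ \cite[Lemma 5.8]{TT1} --- indeed the paper's entire proof reads ``This is from (\ref{eqn_A.1-5})'', the preceding construction having been laid out in the text. Your explicit verification that the stacky features do not interfere (affineness of $\pi$ with $\pi_*$ exact and $\pi_*\rE=\E$, the cotangent-complex and Illusie formalism for DM stacks, and existence of geometric points) matches the remarks the paper makes in its surrounding discussion.
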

\begin{proof}
This is from (\ref{eqn_A.1-5}).
\end{proof}

\subsubsection{Trace}

We put the trace of the section $\Phi$ in (\ref{eqn_A.1-6}) and get:
$$\N\times\sS\stackrel{\Phi}{\hookrightarrow}\Pi^*\sfH\stackrel{\tr}{\longrightarrow}\N\times K_{\sS}.$$
Then using the same analysis as in (\ref{eqn_A.1-7}), (\ref{eqn_A.1-8}) for the section $\tr \Phi$ we get a relative obstruction theory
$$\T_{\rM\times H^0(K_{\sS})/\rM}\to Rp_{\sS*}K_{\sS}$$
for $\rM\times \Gamma(K_{\sS})\to \rM$, and a commutative diagram:
\begin{equation}\label{eqn_diagram_A.1-9}
\xymatrix{
\T_{\N/\rM}\ar[r]^--{D(\tr\Phi)}\ar[d]_{\At_{\Phi}^{\red}}& (\tr\Phi)^*\T_{\rM\times\Gamma(K_{\sS})/\rM}\ar[d]\\
R\cHom_{p_{\sS}}(\E, \E\otimes K_{\sS})\ar[r]^--{\tr} & Rp_{\sS*}K_{\sS}.
}
\end{equation}
The diagram is compatible with the morphisms
$$\N\to \rM\times\Gamma(K_{\sS}); \quad  (E,\phi)\mapsto (E, \tr\phi).$$
Also $\T_{\rM\times\Gamma(K_{\sS})/\rM}\cong \Gamma(K_{\sS})\otimes \sO_{\rM}$, and the right hand side arrow in  (\ref{eqn_diagram_A.1-9}) is the canonical embedding:
$$\Gamma(K_{\sS})\otimes \sO_{\rM}\stackrel{H^0}{\longrightarrow}R\Gamma(K_{\sS})\otimes \sO_{\rM}
\cong Rp_{\sS*}K_{\sS}.$$
Then taking co-cones of (\ref{eqn_diagram_A.1-9}) we get the commutative diagram of exact triangles:
\begin{equation}\label{eqn_diagram_A.1-10}
\xymatrix{
\T_{\N/\rM\times\Gamma(K_{\sS})}\ar[r]\ar[d]_{\At_0^{\red}}&\T_{\N/\rM}\ar[r]^--{\tr\Phi}\ar[d]_{\At_{\Phi}^{\red}}& (\tr\Phi)^*\T_{\rM\times\Gamma(K_{\sS})/\rM}\ar[d]\\
R\cHom_{p_{\sS}}(\E, \E\otimes K_{\sS})_0\ar[r]& R\cHom_{p_{\sS}}(\E, \E\otimes K_{\sS})\ar[r]^--{\tr} & Rp_{\sS*}K_{\sS}
}
\end{equation}
where $\At_0^{\red}$ is the trace-free component of $\At_{\Phi}^{\red}$ in the splitting of the top row. 

Then from \cite[Lemma 5.28]{TT1}, the dual of the left hand arrow gives a perfect obstruction theory. 
We combine the diagram (\ref{eqn_diagram_A.1-10})  to (\ref{diagram_A.1-1}), therefore the diagram 
\[
\xymatrix{
\T_{\N/\rM\times\Gamma(K_{\sS})}\ar@{->}[r]\ar[d]& \T_{\N/\rM}\ar@{->}[r]^--{}_--{}\ar[d] & \T_{\Gamma(K_{\sS})}\ar@{=}[d] \\
\T_{\N/\Gamma(K_{\sS})}\ar@{->}[r]\ar[d]& \T_{\N}\ar@{->}[r]\ar[d]& \T_{\Gamma(K_{\sS})}\\
\T_{\rM}\ar@{=}[r] & \T_{\rM}
}
\] 
maps to the diagram:
\[
\xymatrix@C=2em@R1.8em{
R\cHom_{p_{\sS}}(\E, \E\otimes K_{\sS})_0\ar@{<->}[r]\ar[d]& R\cHom_{p_{\sS}}(\E, \E\otimes K_{\sS})\ar@{<->}[r]^--{\tr}_--{\id}\ar[d] & R\Gamma(K_{\sS})\ar@{=}[d] \\
R\cHom_{p_{\sX}}(\rE, \rE)^0[1]\ar@{<->}[r]\ar[d]& R\cHom_{p_{\sX}}(\rE, \rE)[1]\ar@{<->}[r]\ar[d]& R\Gamma(K_{\sS})\\
R\cHom_{p_{\sS}}(\E,\E)[1]\ar@{=}[r] & R\cHom_{p_{\sS}}(\E,\E)[1].
}
\] 
Therefore,
\begin{equation}\label{eqn_A.1-11}
\T_{\N/\Gamma(K_{\sS})}\to
R\cHom_{p_{\sX}}(\rE, \rE)^0[1]
\end{equation}
is a perfect obstruction theory. 

\subsubsection{Determinant}

The method of fixing the determinant is standard, and from (5.32), (5.33) of \cite{TT1}, the diagram: 
\[
\xymatrix{
\T_{\N/\rM\times\Gamma(K_{\sS})}\ar@{=}[r]\ar[d]& \T_{\N/\rM\times\Gamma(K_{\sS})}\ar[d]& \\
\T_{\N/\Gamma(K_{\sS})\times\Pic(\sS)}\ar@{->}[r]\ar[d]& \T_{\N/\Gamma(K_{\sS})}\ar@{->}[r]^--{\det_{*}}_--{}\ar[d] & \T_{\Pic(\sS)}\ar@{=}[d] \\
\T_{\rM/\Pic(\sS)}\ar@{->}[r]& \T_{\rM}\ar@{->}[r]& \T_{\Pic(\sS)}
}
\] 
maps to the diagram:
\[
\xymatrix@C=2em@R1.8em{
R\cHom_{p_{\sS}}(\E, \E\otimes K_{\sS})_0\ar@{=}[r]\ar[d]& R\cHom_{p_{\sS}}(\E, \E\otimes K_{\sS})_0\ar[d] &\\
R\cHom_{p_{\sX}}(\rE, \rE)_{\perp}[1]\ar@{<->}[r]\ar[d]& R\cHom_{p_{\sX}}(\rE, \rE)^0[1]\ar@{<->}[r]\ar[d]& R\Gamma(\sO_{\sS})[1]\ar@{=}[d]\\
R\cHom_{p_{\sS}}(\E,\E)_0[1]\ar@{<->}[r] & R\cHom_{p_{\sS}}(\E,\E)[1]\ar@{<->}[r]& R\Gamma(\sO_{\sS})[1].
}
\] 
We have the splitting of the Atiyah class 
$$\At_{\rE, \N}=(\At_{\rE, \N}^{\perp}, \At_{\det \rE, \N})$$
from the central row of the above diagram.

\begin{prop}\label{prop_symmetric_POT_Perp}
The morphism 
$$\At_{\rE, \N}^{\perp}: R\cHom_{p_{\sX}}(\rE, \rE)_{\perp}[2]\Tt^{-1}\longrightarrow \ll_{\N/\Gamma(K_{\sS})\times\Pic(\sS)}$$
is a $2$-term symmetric relative obstruction theory. 
\end{prop}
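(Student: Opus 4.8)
The plan is to read off the statement from the symmetric perfect obstruction theory on $\N$ already produced in Proposition \ref{prop_SPOT}, combined with the successive splittings of the Atiyah class recorded in the Higgs-field, trace and determinant paragraphs. First I would recall that Proposition \ref{prop_SPOT} furnishes the absolute symmetric obstruction theory $\tau^{[-1,0]}(R\cHom_{p_{\sX}}(\rE,\rE)[2])\Tt^{-1}\to\ll_{\N/B}$, and that the chain of relative obstruction theories built afterwards (Proposition \ref{prop_POT_relative}, then the trace analysis giving (\ref{eqn_A.1-11}), then the determinant step) identifies the successive relative pieces with the summands $R\cHom_{p_{\sS}}(\E,\E\otimes K_{\sS})_0$, $R\cHom_{p_{\sX}}(\rE,\rE)^0$ and finally $R\cHom_{p_{\sX}}(\rE,\rE)_\perp$. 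Thus $\At^{\perp}_{\rE,\N}$ is exactly the co-cone obtained by removing the Higgs-trace and determinant directions, and by the compatibility of the two commutative diagrams in the Determinant paragraph it lands in $\ll_{\N/\Gamma(K_{\sS})\times\Pic(\sS)}$.

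The obstruction-theory property then follows from the deformation-theoretic meaning of the perp summand: deformations and obstructions of a stable Higgs pair $(E,\phi)$ with fixed determinant $L$ and trace-free $\phi$ are governed by $\Ext^{i}_{\sX}(\sE_\phi,\sE_\phi)_\perp$, which are the cohomologies of $R\cHom_{p_{\sX}}(\rE,\rE)_\perp[1]$. Concretely I would run the same Behrend--Fantechi criterion (condition (3) of \cite[Theorem 4.5]{BF}) used in Proposition \ref{prop_SPOT}: pull back the partial Atiyah class $\At^{\perp}_{\rE,\N}$ along a square-zero extension $T\subset\overline{T}$ and compose with the Kodaira--Spencer class. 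Since each of the quotient constructions (making $\phi$ traceless, fixing $\det E$) was already shown to agree with the reduced Atiyah-class obstruction theory, their common co-cone inherits the axioms; working étale-locally this is valid for the DM stack $\sX$ exactly as before.

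For the symmetry I would invoke Proposition \ref{prop_self_dual}: relative Serre duality for $\pi$, using $\pi^{!}=\pi^*\otimes K_{\sS/B}^{-1}[1]$ and $K_{\sX/B}\cong\sO\otimes\Tt^{-1}$, identifies $R\cHom_{p_{\sX}}(\rE,\rE)[2]\Tt^{-1}$ with its own dual, and the leftmost column of the diagram following Proposition \ref{prop_self_dual} shows that this self-duality restricts to the perp summand $R\cHom_{p_{\sX}}(\rE,\rE)_\perp$. Hence the relative obstruction theory $\At^{\perp}_{\rE,\N}$ is self-dual, i.e. symmetric. For the $2$-term claim I would argue as in Proposition \ref{prop_SPOT}: the $\cc^*$-fixed stable sheaves $\sE_\phi$ are simple, so the trace maps give $\Hom_\perp(\sE_\phi,\sE_\phi)=0$ and, by Serre duality on the threefold $\sX$, also $\Ext^3_\perp(\sE_\phi,\sE_\phi)=0$; removing the determinant direction kills degrees $0$ and $3$ altogether, so $R\cHom_{p_{\sX}}(\rE,\rE)_\perp$ is perfect of amplitude $[1,2]$, and the shift by $[2]$ places $\At^{\perp}_{\rE,\N}$ in amplitude $[-1,0]$ with no truncation required.

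The main obstacle I anticipate is bookkeeping rather than any new input: one must check that the three co-cone operations commute with one another and that both the self-duality and the amplitude bounds survive the passage to the perp summand in the $\cc^*$-equivariant, stacky setting. Each individual compatibility has, however, already been verified in the preceding diagrams and in Propositions \ref{prop_self_dual}, \ref{prop_POT_relative}, so the remaining work is to assemble them and to confirm that the simplicity of $\sE_\phi$ and the threefold Serre duality continue to hold for $\sX=\mbox{Tot}(K_{\sS})$, which is the recurring theme throughout this appendix.
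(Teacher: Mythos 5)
Your proposal is correct and follows essentially the same route as the paper: the paper's proof is just a citation of \cite[Proposition 5.34]{TT1}, and the argument you assemble — co-cones of the trace and determinant splittings, the Behrend--Fantechi criterion exactly as in Proposition \ref{prop_SPOT}, self-duality inherited from Proposition \ref{prop_self_dual}, and the amplitude count $[1,2]$ via simplicity and Serre duality so that no truncation is needed after the shift — is precisely the Tanaka--Thomas argument that the appendix's preceding diagrams are set up to transport to the stacky setting. The only slip is cosmetic: in the splitting $\Ext^{\bullet}_{\sX}(\sE_\phi,\sE_\phi)=H^{\bullet-1}(K_{\sS})\oplus H^{\bullet}(\sO_{\sS})\oplus\Ext^{\bullet}_{\sX}(\sE_\phi,\sE_\phi)_{\perp}$, degree $0$ is removed by the determinant direction $H^0(\sO_{\sS})$ while degree $3$ is removed by the trace-of-Higgs direction $H^{2}(K_{\sS})$, not by the determinant direction alone.
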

\begin{proof}
See \cite[Proposition 5.34]{TT1}. 
\end{proof}
\begin{rmk}
In general, for the moduli space of Higgs sheaves $\N$, one can take two-term locally free resolution 
$$0\to \E_2\to \E_1\to \E\to 0$$
(since $\E$ is torsion free and had homological dimension less than or equal to $1$ on $\sS$), the similar analysis as in \cite[\S 5.7]{TT1} shows that Proposition \ref{prop_symmetric_POT_Perp} gives a symmetric relative obstruction theory for the moduli space $\N$. 
\end{rmk}


\subsection*{}

\end{document}